\newtheorem{thm}{Theorem}[section]
\newtheorem{lem}[thm]{Lemma}
\newtheorem{cor}[thm]{Corollary}
\DeclareMathOperator{\Bin}{Bin}  
\newcommand{\NN}{{\mathbb N}}
\newcommand{\er}{\mathrm{e}}
\newcommand{\cC}{\ensuremath{\mathcal{C}}}
\newcommand{\cE}{\ensuremath{\mathcal{E}}}
\newcommand{\cH}{\ensuremath{\mathcal{H}}}
\newcommand{\cJ}{\ensuremath{\mathcal{J}}}
\newcommand{\cS}{\ensuremath{\mathcal{S}}}
\newcommand{\cT}{\ensuremath{\mathcal{T}}}
\newcommand{\cZ}{\ensuremath{\mathcal{Z}}}
\newcommand{\PR}{\mathbb P}
\newcommand{\E}{{\mathbb E}\,}
\newcommand{\be}{\begin{equation}}
\newcommand{\ee}{\end{equation}}
\newcommand{\ssum}[1]{\sum_{\substack{#1}}}  
\newcommand{\sprod}[1]{\prod_{\substack{#1}}}  
\newcommand{\lam}{\ensuremath{\lambda}}
\renewcommand{\a}{\ensuremath{\alpha}}
\renewcommand{\b}{\ensuremath{\beta}}
\newcommand{\eps}{\ensuremath{\varepsilon}}
\renewcommand{\le}{\leqslant}
\renewcommand{\ge}{\geqslant}
\renewcommand{\geq}{\geqslant}
\newcommand{\fl}[1]{{\ensuremath{\left\lfloor {#1} \right\rfloor}}}
\newcommand{\cl}[1]{{\ensuremath{\left\lceil #1 \right\rceil}}}
\renewcommand{\(}{\left(}
\renewcommand{\)}{\right)}
\newcommand{\pfrac}[2]{\left(\frac{#1}{#2}\right)}  
\newcommand{\hh}{\ensuremath{\mathbf{h}}}
\newcommand{\one}{\ensuremath{\mathbbm{1}}} 
\begin{document}

\begin{frontmatter}[classification=text]

  \author[kbf]{Kevin Ford\thanks{Supported by National Science Foundation
      grant DMS-1802139.}}

\begin{abstract}
We provide a standard reference for fundamental distributional results
about the cycle type of a random permutation $\sigma \in \cS_n$, emphasizing methods
which are combinatorial or probabilistic in nature and adaptable to other
situations.  Many of our techniques are borrowed from methods used to
prove analogous theorems about the prime factorization of random integers.
Included here are results about the proportion of permutations $\sigma$ having
a given number of cycles with lengths from a given set, the distribution of
the smallest and largest cycle, and the distribution of the sizes of fixed sets
of $\sigma$.

\end{abstract}
\end{frontmatter}

\section{Introduction}

The theory of the cycle type of
random permutations of the symmetric group $\cS_n$
is very active, with many applications in combinatorics, group theory
and number theory.  A selection of applications includes
\begin{itemize}
\item the distribution of orders of permutations
(the least common multiple of cycle lengths)
\cite{equalorders, BGHP, BLGNPS, CHM, ET1, ET2, ET3, ET4, ET5, ET6, ET7, GS,Landau-perm,Massias,Nicolas,NP,NP2} and \cite[Sec. 6]{granville};
\item invariable generation of the symmetric group
\cite{dixon92, EFG2, LP, PPR} and other classical 
groups \cite{McK};
\item the distribution of fixed sets (divisors)
of permutations \cite{dfg08, EFG1, EFG2, EFK, FGK, LP, weingartner};
\item permutations contained in transitive subgroups
\cite{camkan, EFK, Jordan};
\item irreducibility of polynomials over the rationals
\cite{BSK, BSKK};
\item permutation groups containing elements with a single
cycle that is not a fixed point (Jordan groups) \cite{Jordan, GPU}
and \cite[Ch. 10]{Seress};
\item polynomial factorization in finite fields
\cite{ABT, BSK, Rudnick19}.
\end{itemize}

The main purpose of this paper is provide a standard reference for fundamental distributional results
about cycle types, which heretofore have been scattered across many papers
with widely varying strength and generality.
We showcase methods which are both
\emph{general} and \emph{combinatorial}.    
While many of the results stated here are weaker than existing results
in the literature, they are far more general,
have significantly shorter proofs and are more adaptable to new situations.
This paper is an expanded version of portions of the author's lecture notes 
on permutations prepared for the course ``Anatomy of integers and random permutations''.

Our methods are borrowed from the
theory of numbers, particularly the theory of sieves
and the theory of averages of multiplicative functions
(see \cite[Part 3, Part 4]{Koubook} for uses in number theory).
As positive integers factor uniquely into a product of prime numbers,
and permutations factor uniquely into a product of cycles,
the connection between the distributions of the two objects,
prime factors and cycles, is not surprising.
The first explicit mention of such a connection, however,
is the paper of Knuth and Trabb Pardo \cite{KTP} in 1976.
On the other hand, there are significant differences in the structure of the two objects which explains why there is no simple \emph{transference principle}
between statements about prime factorizations and the corresponding statement 
about the cycle structure of permutations.
 Deeper inspection, however, reveals that
the \emph{distribution} of the two factorizations have many common features,
and for much the same underlying reasons.

Let $\sigma$ denote a random permutation from
the symmetric group $\cS_n$, each permutation being equally
likely\footnote{Random permutations sampled
from certain other distributions have been studied, e.g. \cite{ABTbook}, but we will not discuss these here.}.
We denote by $\PR_n$ and $\E_n$ the probability and expectation
with respect to a uniform random $\sigma\in \cS_n$.
Often, the subscript $n$ will be omitted if it is clear from the context.
We denote the type (or cycle type) of $\sigma$ by
\[
(C_1(\sigma), C_2(\sigma),\ldots, C_n(\sigma)),
\]
where
$C_j(\sigma)$ is the number of cycles of length $j$ in $\sigma$.
More generally, for any subset $I$ of $[n]=\{1,\ldots,n\}$, we let $C_I(\sigma)$
be the number of cycles whose lengths lie in the set $I$.
For brevity, we write $C(\sigma)$ for the total number of
cycles in $\sigma$.
The principal problems considered in this paper are
\begin{itemize}
\item[(a)] What is the distribution of $C_j(\sigma)$ for each $j$?
\item[(b)] What is the distribution of $C_I(\sigma)$ for each $I$?
\item[(c)] What is the joint distribution of $C_{I_1}(\sigma),
\ldots,C_{I_k}(\sigma)$ for disjoint sets $I_1,\ldots,I_k\subseteq [n]$?
\item[(d)] What is the distribution of $C_I(\sigma)$
conditional on $C(\sigma)=k$?
\end{itemize}

Most of the analysis of these problems in the literature utilizes
recurrence relations, properties of Stirling numbers, or 
complex analytic methods
starting with the exponential generating function of Gruder
\cite[Satz 2]{Gruder} for permutations having only cycle sizes from a set $I$.
See, e.g. \cite{FlSe} for a general analytic theory.

\begin{thm}[Gruder]\label{thm:genfcn}
For complex $x$ and $y$ with $|x|<1$, and subset $I\subseteq \NN$ we have
\be\label{genfcn}
\sum_k \sum_n \PR_n \big(C_I(\sigma)=k,C_{[n]\setminus I}(\sigma)=0\big) x^n y^k=
\exp \bigg\{ y \sum_{m\in I} \frac{x^m}{m}
\bigg\}.
\ee
Moreover, when $I$ is finite the above identity holds for every complex $x$.
\end{thm}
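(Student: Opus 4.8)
The plan is to reduce the identity to the classical count of permutations by cycle type and then to recognise the right-hand side of \eqref{genfcn} as the Euler product attached to that count.

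First I would recall, or prove by the standard ``write the elements in a row, cut into cycles, divide out the overcounting'' argument, that for any finitely supported sequence $(c_j)_{j\ge 1}$ of non-negative integers with $\sum_j jc_j = n$, the number of $\sigma\in\cS_n$ having exactly $c_j$ cycles of length $j$ for every $j$ is $n!\big/\prod_{j\ge1}\bigl(j^{c_j}c_j!\bigr)$. Dividing by $n!$ and summing over the admissible sequences supported on $I$ gives
\be
\PR_n\bigl(C_I(\sigma)=k,\ C_{[n]\setminus I}(\sigma)=0\bigr)=\ssum{(c_j)_{j\in I}\\ \sum_{j\in I}c_j=k\\ \sum_{j\in I}jc_j=n}\ \prod_{j\in I}\frac{1}{j^{c_j}c_j!}.
\ee

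Next I would substitute this into the left side of \eqref{genfcn}. On the index set appearing there one has $x^n=\prod_{j\in I}x^{jc_j}$ and $y^k=\prod_{j\in I}y^{c_j}$, so the triple summation over $k$, $n$, and $(c_j)$ merges into a single summation over all finitely supported $(c_j)_{j\in I}$, and this sum factorises over cycle lengths:
\be
\sum_{(c_j)_{j\in I}}\ \prod_{j\in I}\frac{(x^jy/j)^{c_j}}{c_j!}=\prod_{j\in I}\ \sum_{c\ge0}\frac{(x^jy/j)^{c}}{c!}=\prod_{j\in I}\exp\!\Bigl(\frac{x^jy}{j}\Bigr)=\exp\Bigl\{y\sum_{j\in I}\frac{x^j}{j}\Bigr\},
\ee
which is exactly the asserted formula.

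The only point that needs genuine care is the rearrangement of the (in general infinite) multi-indexed sum and the interchange of sum and product, so the main obstacle is this convergence bookkeeping rather than any algebra. I would handle it by absolute convergence: replacing every summand by its modulus yields $\prod_{j\in I}\exp(|x|^j|y|/j)=\exp\bigl(|y|\sum_{j\in I}|x|^j/j\bigr)$, and since $\sum_{j\ge1}|x|^j/j=\log\frac1{1-|x|}<\infty$ whenever $|x|<1$, the series converges absolutely for every complex $y$, which legitimises all the manipulations above (Fubini for sums). When $I$ is finite, $\sum_{j\in I}|x|^j/j$ is a finite sum and hence finite for every complex $x$, so the same computation proves the identity with no restriction on $x$. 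One could alternatively phrase the whole argument through the exponential formula for labelled structures --- permutations being sets of cycles, a cycle of length $m$ contributing $x^m/m$ --- but the bare-hands factorisation above seems cleanest for a self-contained toolkit.
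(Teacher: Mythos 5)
Your proof is correct and follows essentially the same route as the paper: both use Cauchy's formula (the count $n!/\prod_j j^{c_j}c_j!$ for a given cycle type), sum over types supported on $I$, and factor the resulting multi-index sum into a product of exponentials. Your explicit absolute-convergence check for $|x|<1$ is a nice touch that the paper leaves implicit.
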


While some existing distribution theorems are very strong,
in particular the recent results
of Manstavi\v{c}ius and Petuchovas
 \cite{ManPet16, ManPet17, Pet16, Pet18},
the methods are highly specialized and not easily adaptable to the solution of related problems.
By contrast, 
we eschew recurrences and generating functions
(for the most part) in favor of direct arguments.
We focus on
\emph{quantitative} results, that is, with a specific rate
of convergence, as well as results that are \emph{uniform}
in $j,I$ and the sets $I_j$.  

Underlying our analysis is the
\emph{Poisson model} of permutations, which suggests that
$C_j(\sigma)$ is approximately Poisson with parameter $1/j$,
and that $C_1(\sigma), C_2(\sigma), \ldots$ are nearly independent.
This is already hinted at in Cauchy's classical formula:

\begin{thm}[Cauchy]\label{thm:Cauchy}
If $m_1+2m_2+\cdots+nm_n=n$, then
\[
\PR_n \big( C_1(\sigma)=m_1,\ldots,C_n(\sigma)=m_n \big) = \prod_{j=1}^n \frac{(1/j)^{m_j}}{m_j!}.
\]
\end{thm}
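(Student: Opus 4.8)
The plan is to count directly the number $N=N(m_1,\dots,m_n)$ of permutations $\sigma\in\cS_n$ with $C_j(\sigma)=m_j$ for every $j$, prove that
\[
N=\frac{n!}{\prod_{j=1}^{n} j^{m_j}\, m_j!},
\]
and then divide by $|\cS_n|=n!$ (the permutation being uniform). The hypothesis $\sum_j jm_j=n$ is precisely what is needed for permutations of this type to exist at all.

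To obtain $N$, I would exhibit the permutations of the prescribed type as the image of a many-to-one map from orderings of $[n]$. Fix once and for all the list of block lengths consisting of $m_1$ copies of $1$, then $m_2$ copies of $2$, \dots, then $m_n$ copies of $n$; since $\sum_j jm_j=n$ these blocks tile the positions $1,\dots,n$. To each ordering $w=(w_1,\dots,w_n)$ of $[n]$ (there are $n!$ of them) associate the permutation obtained by cutting $w$ into consecutive segments of these lengths and reading each segment $(a_1,\dots,a_\ell)$ as the cycle $a_1\mapsto a_2\mapsto\cdots\mapsto a_\ell\mapsto a_1$. This produces a permutation with exactly $m_j$ cycles of length $j$ for every $j$, and conversely every such permutation arises this way, so the map is onto the set we wish to count.

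The key step is the fiber count. Given a permutation $\sigma$ with $C_j(\sigma)=m_j$ for all $j$, an ordering $w$ maps to $\sigma$ if and only if the segments of $w$, read cyclically, are exactly the cycles of $\sigma$. Such a $w$ is specified by choosing, for each $j$ independently, a bijection from the $m_j$ cycles of $\sigma$ of length $j$ to the $m_j$ designated blocks of length $j$ (giving $\prod_j m_j!$ possibilities), together with, within each block, a starting element of the cycle placed there (giving $\prod_j j^{m_j}$ possibilities). Distinct data produce distinct orderings, and every ordering in the fiber is obtained this way, so each fiber has size exactly $\prod_j j^{m_j}\,m_j!$. Hence $N=n!\big/\prod_j j^{m_j}\,m_j!$, and the theorem follows.

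The only point that really needs care is the \emph{exactness} of the fiber count --- checking that every ordering mapping to $\sigma$ comes from the recipe above and that different (bijection, starting-element) data never collide --- but this is a short verification using that the $n$ entries of an ordering are distinct, so segments belonging to different cycles are disjoint as sets. If one prefers to avoid orderings, the same value of $N$ can be assembled as the number $\frac{n!}{\prod_j (j!)^{m_j} m_j!}$ of set partitions of $[n]$ into $m_j$ blocks of size $j$, times the number $\prod_j ((j-1)!)^{m_j}$ of ways to turn each block into a single cycle, which again equals $\frac{n!}{\prod_j j^{m_j} m_j!}$; and there is also a short induction on $n$ obtained by conditioning on the length of the cycle through the element $1$. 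The ordering-count is the most transparent of these.
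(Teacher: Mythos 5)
Your proof is correct. The fiber count is the only delicate point and you handle it properly: distinct (bijection, starting-element) data give distinct orderings because the cycles of $\sigma$ are disjoint as sets, and every ordering in the fiber arises, so $N\cdot\prod_j j^{m_j}m_j!=n!$ exactly. Your route differs from the paper's, though. The paper does not count permutations of a fixed type directly; it first proves a factorial-moment identity (Lemma \ref{cycles}): if $m_1+2m_2+\cdots+nm_n\le n$ then $\E\prod_j\binom{C_j(\sigma)}{m_j}=\prod_j\frac{(1/j)^{m_j}}{m_j!}$, proved by choosing disjoint blocks of the prescribed sizes, arranging each block into a cycle in $(j-1)!$ ways, and permuting the $t$ leftover elements arbitrarily in $t!$ ways. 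Cauchy's formula is then the one-line corollary in the boundary case $\sum_j jm_j=n$, where $\prod_j\binom{C_j(\sigma)}{m_j}$ is forced to be the indicator of the event $C_j(\sigma)=m_j$ for all $j$. What the paper's detour buys is generality: the same lemma with $t>0$ leftover elements drives Theorem \ref{cycles_sets_expectation}, the generating function \eqref{genfcn}, and most of the subsequent moment and sieve arguments, whereas your ordering-and-fiber count (the classical proof) gives only the exact-type formula, albeit in a fully self-contained and transparent way. Note also that your second alternative --- partition $[n]$ into blocks and turn each block into a cycle, giving $\frac{n!}{\prod_j (j!)^{m_j}m_j!}\cdot\prod_j((j-1)!)^{m_j}$ --- is precisely the paper's counting argument specialized to $t=0$, so that variant coincides with the paper's proof.
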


If  $X_1, X_2, \ldots, X_k$ are independent Poisson random variables 
with parameters $\lambda_1,\ldots,\lambda_k$, respectively, then the sum
$X_1+\cdots +X_k$ is Poisson with parameter $\lambda_1+\cdots+\lambda_k$.
Thus, for subsets $I$ of $[n]$ we should expect that $C_I(\sigma)$ will be roughly 
Poisson with parameter 
\[
H(I) := \sum_{j\in I} \frac{1}{j}.
\]
In the important special case $I=\{1,\ldots,n\}$ we set 
\[
H_n = \sum_{i=1}^n \frac{1}{i}.
\]
The Poisson model has limitations, however,
particularly if $I$ contains many large elements.
For example the events
``$C_j(\sigma)\ge 1$'', $n/2<j\le n$, are clearly disjoint.
Also, if $I=\{2,\ldots,n\}$, then $\PR (C_I(\sigma)=0) = 1/n!$
whereas the Poisson model predicts a probability
of about $\er^{-H(I)} \approx 1/n$.
In general, permutations lacking large cycles are much rarer than would be 
predicted by the Poisson model, these being analogous to integers
lacking large prime factors.  We will take up this subject again later,
e.g. Theorem \ref{nolargecycles}.
On the other hand, we shall see that the Poisson model is very accurate for
small $j$, and is reasonably accurate for large $j$ on average
near the center of the distribution.

In the remainder of the introductory section, we describe a number of results, most of which will be proved in subsequent sections.

\subsection{Notational conventions.}
We adopt the standard Bachman-Landau, Hardy, and Vinogradov notations:
$f=O(g)$ and $f\ll g$ mean that there is a positive constant $C$
so that $|f| \le Cg$ throughout the domain of $f$.  The constant $C$ is independent of any parameters, unless specified by subscripts, e.g.
$f(x)=O_{\eps} (x^\eps)$.
Also, $f(x) \sim g(x)$ as $x\to \infty$ means $\lim_{x\to \infty} f(x)/g(x)=1$
and $f(x)=o(g(x))$ means that $\lim_{x\to\infty} f(x)/g(x)=0$.

For $\sigma\in \cS_n$, the notation
$\beta|\sigma$ means that $\beta$ is a \emph{divisor} of the permutation
$\sigma$, i.e. a product of some subset of the cycles of $\sigma$.
$|\beta|$ is the size (length) of $\beta$.

$\one(S)$ is the indicator function of statement $S$; $\displaystyle \one(S) = 
\begin{cases} 1 & S \text{ is true} \\ 0 & S \text{ is false}. \end{cases}$

\subsection{Binomial moments.}
A great deal of our analysis ultimately relies on
estimates for joint binomial moments of the quantities
$C_I(\sigma)$.  Recall that if $X$
is Poisson with parameter $\lambda$,
then for any non-negative integer $m$,
\[
\E \binom{X}{m} = 
\sum_{k=0}^\infty \binom{k}{m} \er^{-\lambda} \frac{\lambda^k}{k!}= \frac{\lambda^m}{m!}.
\]
We establish an analog for joint binomial moments of the statistics
$C_{I_j}(\sigma)$ for disjoint $I_1,\ldots,I_k$.

\begin{thm}\label{cycles_sets_expectation}
Let $I_1,I_2, \ldots, I_k$ be disjoint, nonempty subsets of $[n]$, and let
$m_1,\ldots,m_k$ be non-negative integers.  Then
\[
\E \binom{C_{I_1}(\sigma)}{m_1} \cdots \binom{C_{I_k}(\sigma)}{m_k} \le \prod_{j=1}^k \frac{H(I_j)^{m_j}}{m_j!},
\]
with equality if and only if $\sum_{j=1}^k m_j \max(I_j) \le n$.
\end{thm}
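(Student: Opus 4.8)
\emph{Proof proposal.} The plan is to interpret the product of binomial coefficients as a count of cycle configurations in $\sigma$, expand the expectation by linearity over all \emph{potential} such configurations, and then recognize the resulting sum as a truncation of $\prod_j H(I_j)^{m_j}/m_j!$. First, $\binom{C_{I_j}(\sigma)}{m_j}$ is exactly the number of unordered $m_j$-element sets of distinct cycles of $\sigma$ with lengths in $I_j$, equivalently $\tfrac1{m_j!}$ times the number of ordered $m_j$-tuples of such cycles. Since $I_1,\dots,I_k$ are pairwise disjoint, no cycle can be counted for two different indices $j$, so
\[
\prod_{j=1}^k \binom{C_{I_j}(\sigma)}{m_j}
= \frac{1}{m_1!\cdots m_k!}\,\#\bigl\{(\gamma_{j,i})_{1\le j\le k,\,1\le i\le m_j}:\ \gamma_{j,i}\text{ is a cycle of }\sigma,\ |\gamma_{j,i}|\in I_j,\ \text{all }\gamma_{j,i}\text{ distinct}\bigr\}.
\]
Taking expectations and summing over all candidate families $(\gamma_{j,i})$, where each $\gamma_{j,i}$ is a cyclic arrangement of some subset of $[n]$ of length in $I_j$, the probability that $\sigma$ realizes a given candidate is $0$ unless the $\gamma_{j,i}$ are pairwise disjoint (two cycles sharing a point cannot both occur in $\sigma$), and when they are disjoint and cover $\ell:=\sum_{j,i}|\gamma_{j,i}|\le n$ points, that probability is $(n-\ell)!/n!$, since $\sigma$ is then an arbitrary permutation of the remaining $n-\ell$ points.

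The key bookkeeping step is counting the disjoint families with prescribed lengths. Fix $\ell_{j,i}\in I_j$ and set $\ell=\sum_{j,i}\ell_{j,i}$. If $\ell\le n$, build the cycles one at a time in a fixed order: after $s$ points have been used, placing the next cycle (of length $\ell_{j,i}$) means choosing its support and cyclic order, which can be done in $\binom{n-s}{\ell_{j,i}}(\ell_{j,i}-1)!=\frac{(n-s)!}{(n-s-\ell_{j,i})!\,\ell_{j,i}}$ ways; the product over all $m_1+\cdots+m_k$ cycles telescopes to $\frac{n!}{(n-\ell)!}\prod_{j,i}\ell_{j,i}^{-1}$, while if $\ell>n$ there are no such families. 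Multiplying the count by the probability $(n-\ell)!/n!$, the total contribution of a fixed length pattern is $\tfrac1{m_1!\cdots m_k!}\prod_{j,i}\ell_{j,i}^{-1}$, so
\[
\E \prod_{j=1}^k \binom{C_{I_j}(\sigma)}{m_j}
= \frac{1}{m_1!\cdots m_k!}\sum_{\substack{\ell_{j,i}\in I_j\ (1\le j\le k,\ 1\le i\le m_j)\\ \sum_{j,i}\ell_{j,i}\le n}}\ \prod_{j,i}\frac{1}{\ell_{j,i}}.
\]
I expect this telescoping/probability cancellation to be the only place demanding care — in particular, correctly handling the case $\ell>n$ (no contribution) rather than letting the formula produce spurious terms.

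To finish, note that dropping the constraint $\sum_{j,i}\ell_{j,i}\le n$ only adds nonnegative terms, and the unrestricted sum factors as $\prod_{j=1}^k\bigl(\sum_{\ell\in I_j}\tfrac1\ell\bigr)^{m_j}=\prod_{j=1}^k H(I_j)^{m_j}$, which gives the stated inequality. For the equality case, every summand $\prod_{j,i}\ell_{j,i}^{-1}$ is strictly positive, so the truncated sum equals the full sum if and only if no term is omitted, i.e. every choice $\ell_{j,i}\in I_j$ satisfies $\sum_{j,i}\ell_{j,i}\le n$; since the $\ell_{j,i}$ range independently, this is equivalent to $\sum_{j=1}^k m_j\max(I_j)\le n$, as claimed. (One could alternatively derive the displayed identity from Cauchy's formula, Theorem~\ref{thm:Cauchy}, but the direct count above is self-contained.)
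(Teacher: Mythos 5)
Your proof is correct. The counting core is sound: $\prod_j\binom{C_{I_j}(\sigma)}{m_j}$ counts (up to the factors $m_j!$) ordered families of distinct cycles of $\sigma$ with lengths in the prescribed sets, the probability that a fixed family of disjoint candidate cycles covering $\ell$ points all occur in $\sigma$ is indeed $(n-\ell)!/n!$, and your telescoping count of disjoint families with prescribed lengths gives the exact identity
$\E\prod_j\binom{C_{I_j}(\sigma)}{m_j}=\frac{1}{m_1!\cdots m_k!}\sum_{\ell_{j,i}\in I_j,\ \sum\ell_{j,i}\le n}\prod_{j,i}\ell_{j,i}^{-1}$,
from which both the inequality and the ``if and only if'' equality criterion follow transparently, since every summand is positive and the constraint is vacuous precisely when $\sum_j m_j\max(I_j)\le n$. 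The route is organized differently from the paper's: the paper first proves the singleton-length moment identity (Lemma \ref{cycles}, Watterson's theorem), whose proof is essentially the same disjoint-cycle count you perform, and then deduces Theorem \ref{cycles_sets_expectation} by expanding each $\binom{C_{I_j}(\sigma)}{m_j}$ as a Vandermonde-type sum of products $\prod_{r\in I_j}\binom{C_r(\sigma)}{m_{j,r}}$ and applying the multinomial theorem; strictness when $\sum_j m_j\max(I_j)>n$ is seen by exhibiting one dropped term ($m_{j,\max I_j}=m_j$). Your one-shot linearity-of-expectation argument avoids the intermediate lemma and the convolution step and makes the equality case especially clean; the paper's factorization has the advantage that Lemma \ref{cycles} is reused elsewhere (e.g.\ to derive Theorems \ref{thm:Cauchy}, \ref{thm:genfcn}, \ref{thm:Goncharov} and in the proof of Theorem \ref{nolargecycles}). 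Both arguments ultimately rest on the same fact that a prescribed disjoint cycle configuration extends to exactly $(n-\ell)!$ permutations.
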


In the special case $k=1$, $I_1=[m]$ and $m_1=1$ we have
\be\label{exp-cyclestom}
\E C_{[m]}(\sigma) = H_m = \log m + \gamma + O(1/m).
\ee

Theorem \ref{cycles_sets_expectation} will be proved in Section 
\ref{sec:binom}, where we will also give short deductions of
Theorems \ref{thm:genfcn} and \ref{thm:Cauchy} from
Theorem \ref{cycles_sets_expectation}.

\subsection{Local limit theorems}
We begin with an exact evaluation of the local limit laws for $C_j(\sigma)$, due to
Goncharov \cite{Gon44}.

\begin{thm}[Goncharov]\label{thm:Goncharov}
For any $n\in \NN$, $1\le j\le n$ and $0\le m\le n/j$, we have
\[
\PR \( C_j(\sigma)=m \) = \frac{(1/j)^m}{m!} \sum_{h=0}^{\fl{n/j}-m}  \frac{(-1/j)^h}{h!}, \quad (1\le j\le n, 0\le m\le n/j).
\]
\end{thm}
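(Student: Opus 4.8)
The plan is to deduce Goncharov's formula directly from the binomial-moment identity in Theorem~\ref{cycles_sets_expectation}, via an inclusion–exclusion (Bonferroni-type) inversion. Taking $k=1$, $I_1=\{j\}$ in Theorem~\ref{cycles_sets_expectation}, we have for every non-negative integer $m$
\be\label{eq:binommom-j}
\E \binom{C_j(\sigma)}{m} = \frac{(1/j)^m}{m!} \qquad \text{whenever } mj \le n,
\ee
and $\E\binom{C_j(\sigma)}{m}=0$ once $mj>n$ (the binomial coefficient vanishes since $C_j(\sigma)\le \fl{n/j}$ always). The standard identity relating a distribution to its binomial moments is
\[
\PR(C_j(\sigma)=m) = \sum_{h\ge 0} (-1)^h \binom{m+h}{h} \E\binom{C_j(\sigma)}{m+h},
\]
which is simply the finite sum $\sum_{r\ge m}(-1)^{r-m}\binom{r}{m}\E\binom{C_j(\sigma)}{r}$ reindexed by $r=m+h$; it is valid because $C_j(\sigma)$ is a bounded random variable, so all sums are finite and no convergence issue arises.

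The key steps are then purely computational. First I would substitute \eqref{eq:binommom-j}: the terms with $(m+h)j>n$ vanish, so the sum runs over $0\le h\le \fl{n/j}-m$, giving
\[
\PR(C_j(\sigma)=m) = \sum_{h=0}^{\fl{n/j}-m} (-1)^h \binom{m+h}{h} \frac{(1/j)^{m+h}}{(m+h)!}.
\]
Second, I would simplify the coefficient: $\binom{m+h}{h}\frac{1}{(m+h)!} = \frac{1}{h!\,m!}$, so the right-hand side becomes $\frac{(1/j)^m}{m!}\sum_{h=0}^{\fl{n/j}-m}\frac{(-1/j)^h}{h!}$, which is exactly the claimed expression. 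It remains only to justify the inclusion–exclusion identity itself; I would either cite the elementary fact that for a random variable $X$ supported on $\{0,1,\dots,N\}$ one has $\PR(X=m)=\sum_{r=m}^{N}(-1)^{r-m}\binom{r}{m}\E\binom{X}{r}$, or prove it in one line by writing $\one(X=m) = \binom{X}{m}\sum_{r\ge 0}(-1)^r\binom{X-m}{r} = \sum_{r\ge m}(-1)^{r-m}\binom{X}{m}\binom{X-m}{r-m}$ and using $\binom{X}{m}\binom{X-m}{r-m}=\binom{X}{r}\binom{r}{m}$, then taking expectations.

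There is essentially no hard part here: the only thing to watch is that the equality case in Theorem~\ref{cycles_sets_expectation} (namely $mj\le n$) is exactly what makes \eqref{eq:binommom-j} an identity rather than an inequality, and that for $m+h$ with $(m+h)j>n$ the binomial moment genuinely vanishes (not merely that the upper bound is small), which is why the alternating sum truncates cleanly at $h=\fl{n/j}-m$. If one wanted a self-contained argument avoiding Theorem~\ref{cycles_sets_expectation}, the same formula also follows by extracting the coefficient of $x^n y^m$ in Gruder's generating function \eqref{genfcn} with $I=\{j\}$, expanding $\exp\{y x^j/j\}$ and $\exp\{-\,\text{(rest)}\}$ and matching powers; but routing it through the binomial moments is shorter and fits the paper's stated philosophy.
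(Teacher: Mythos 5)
Your proposal is correct and follows essentially the same route as the paper: the paper likewise combines the inclusion--exclusion identity (Lemma~\ref{incl-excl}) with the binomial moments $\E\binom{C_j(\sigma)}{r}=\frac{(1/j)^r}{r!}$ for $rj\le n$ (Lemma~\ref{cycles}, the singleton case of Theorem~\ref{cycles_sets_expectation}), and then reindexes $r=m+h$. Your added remarks on why the sum truncates at $h=\fl{n/j}-m$ and the simplification $\binom{m+h}{h}/(m+h)!=1/(m!\,h!)$ match the paper's computation exactly.
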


A special case is the
very classical \emph{derangement problem}, posed in 1708 by  Pierre Raymond de Montmort.
Taking $j=1$ we have the exact formula for derangements
\[
\PR(C_1(\sigma)=0 ) = \sum_{j=0}^n \frac{(-1)^j}{j!}.
\]
Observe that if $j,m$ vary with $n$ such that 
$mj\le n$ and that
either $j\to \infty$ or $\frac{n}{j}-m\to \infty$ then
\[
\lim_{n\to \infty} \frac{\PR_n \( C_j(\sigma)=m \)}{\er^{-1/j} (1/j)^m/{m!}} =1.
\]
This establishes the Poisson distribution of $C_j(\sigma)$ in this range.

Theorem \ref{thm:Goncharov} can be thought of as a permutation analog of Landau's \cite[p. 211]{Landau} classical theorem in number theory, which states that
number of integers $n\le x$ 
having exactly $k$ distinct prime factors is asymptotic to
\[
\frac{x}{\log x}\; \frac{(\log\log x)^{k-1}}{(k-1)!}
\]
as $x\to\infty$.

Here we derive a very general local limit law.  In such generality,
we only obtain an upper bound for the probability
 of the expected order.  Lower bounds
are also possible, as are asymptotic formulae, when 
working with small cycle lengths; see Theorem \ref{Poisson_smallcycles} below.
The behavior of $\PR(C_I(\sigma)=0)$ when $I=\{m+1,\ldots,n\}$
is very different from the Poisson model prediction
and will be dealt with separately.

\begin{thm}\label{cycles_sets}
Let $I_1,\ldots,I_r$ be arbitrary disjoint, nonempty subsets of $[n]$ and $m_1,\ldots,m_r\ge 0$.  Then
\[
\PR\big( C_{I_1}(\sigma)=m_1,\ldots,C_{I_r}(\sigma)=m_r \big) \le \frac{\er^{H_n}}{n} \, \prod_{j=1}^r \Bigg( \frac{H(I_j)^{m_j}}{m_j!} \er^{-H(I_j)}\Bigg) \cdot \(\eps+\frac{m_1}{H(I_1)}+\cdots+\frac{m_r}{H(I_r)}\),
\]
where $\eps=0$ if $[n]=I_1\cup \cdots \cup I_r$ and $\eps=1$
otherwise.
\end{thm}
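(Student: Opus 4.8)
Write $P_A(x):=\sum_{i\in A}x^i/i$ for $A\subseteq[n]$ (so $H(A)=P_A(1)$), set $J:=[n]\setminus(I_1\cup\cdots\cup I_r)$, and abbreviate $\vec C=\vec m$ for the event $\{C_{I_j}(\sigma)=m_j,\ 1\le j\le r\}$. The strategy is to pin $\PR_n(\vec C=\vec m)$ down as a single power-series coefficient, recognise that coefficient as a point probability for a sum of independent ``weighted-harmonic'' random variables, and then bound that point probability by a size-biasing (first-moment) argument.

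First I would record the exact identity
\[
\PR_n(\vec C=\vec m)\;=\;[x^n]\;\exp\!\big(P_J(x)\big)\prod_{j=1}^{r}\frac{P_{I_j}(x)^{m_j}}{m_j!}\,.
\]
This falls straight out of Cauchy's formula (Theorem~\ref{thm:Cauchy}): summing $\prod_{i=1}^{n}(1/i)^{n_i}/n_i!$ over all cycle types $(n_1,\dots,n_n)$ with $\sum_{i\in I_j}n_i=m_j$ for every $j$ and $\sum_i i\,n_i=n$ is exactly the coefficient extraction above once each cycle is marked by whichever of $I_1,\dots,I_r,J$ contains its length; equivalently it is the natural multi-mark form of Gruder's Theorem~\ref{thm:genfcn}.

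Next I would normalise. Since $P_{I_j}(x)/H(I_j)$ is the probability generating function of a random variable $B$ supported on $I_j$ with $\PR(B=i)\propto 1/i$, and $\exp\!\big(P_J(x)-H(J)\big)$ is the p.g.f.\ of $T:=\sum_{u=1}^{L}C_u$ --- here $L\sim\mathrm{Poisson}(H(J))$ and the $C_u$ are i.i.d.\ on $J$ with $\PR(C_u=i)\propto 1/i$, all these variables independent --- the displayed identity turns, after extracting $\exp(H(J))=\er^{H_n}\prod_j\er^{-H(I_j)}$, into
\[
\PR_n(\vec C=\vec m)\;=\;\er^{H_n}\prod_{j=1}^{r}\Big(\frac{H(I_j)^{m_j}}{m_j!}\,\er^{-H(I_j)}\Big)\cdot\PR\big(\widetilde S=n\big),\qquad \widetilde S:=T+\sum_{j=1}^{r}\sum_{t=1}^{m_j}B_{j,t},
\]
with the $B_{j,t}$ independent copies of the relevant $B$. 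Everything thus reduces to the anti-concentration bound $\PR(\widetilde S=n)\le \frac1n\big(\eps+\sum_j m_j/H(I_j)\big)$, where $\eps=\one(J\neq\emptyset)$ is exactly the $\eps$ in the statement.

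For this last bound I would start from $n\,\PR(\widetilde S=n)=\E\big[\widetilde S\,\one(\widetilde S=n)\big]$ and split $\widetilde S$ into its independent summands. For one $B_{j,t}$, conditioning on $\widetilde S':=\widetilde S-B_{j,t}$ gives $\E[B_{j,t}\one(\widetilde S=n)]=\frac1{H(I_j)}\sum_{i\in I_j}\PR(\widetilde S'=n-i)\le\frac1{H(I_j)}$ --- the weight $1/i$ cancels the factor $i$, and the events $\{\widetilde S'=n-i\}$ are disjoint --- which sums over the $m_j$ copies and over $j$ to the contribution $\sum_j m_j/H(I_j)$. The genuinely delicate point is the $T$-term: the crude bound $\E T=|J|$ is useless, and one must exploit that $T$ is a sum over a \emph{random} family of $J$-atoms. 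Conditioning on $L=\ell$ and applying the same disjointness trick to each $C_u$ yields $\E\big[\big(\sum_{u\le\ell}C_u\big)\one(\widetilde S=n)\mid L=\ell\big]\le\ell/H(J)$, hence $\E[T\one(\widetilde S=n)]\le\E L/H(J)=1$ (and this term is simply $0=\eps$ when $J=\emptyset$); in generating-function language this is nothing but the identity $x\,\frac{d}{dx}\exp(P_J(x))=\big(\sum_{i\in J}x^i\big)\exp(P_J(x))$, i.e.\ ``size-biasing $T$ pulls out a single $J$-atom''. Adding the two contributions gives $\E[\widetilde S\,\one(\widetilde S=n)]\le\eps+\sum_j m_j/H(I_j)$, and dividing by $n$ completes the proof. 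The one real obstacle is getting the $J$-part with constant $1$ rather than $|J|$, which forces the ``sum over a random index set'' viewpoint; the remainder is routine bookkeeping.
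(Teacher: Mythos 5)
Your proof is correct and is, at bottom, the same argument as the paper's: the identity $n\,\PR(\widetilde S=n)=\E\big[\widetilde S\,\one(\widetilde S=n)\big]$, split atom by atom with the total-size constraint discarded via disjointness, is precisely the paper's Lemma \ref{lem:CIjmj} (obtained there by plucking a cycle $\alpha$ from $\sigma$ and weighting by $|\alpha|$) followed by dropping the condition $b_1+2b_2+\cdots+nb_n=n-h$ and evaluating with the multinomial theorem. The only difference is packaging: you work in the independent compound-Poisson model furnished by the exponential generating function, whereas the paper manipulates cycle types directly, and both routes produce the same contributions $m_j/H(I_j)$ from the marked sets and the contribution $\eps$ from the complementary set $J$.
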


The analog of Theorem \ref{cycles_sets} for prime factors of a random integer $n\le x$ was proved by the author \cite{primepoisson}.  We note that $H_n \le \log n + 1$
for all $n$, thus the factor $\er^{H_n}/n$ is  bounded.
Consequently, whenever $r$ is bounded and $m_j = O(H(I_j))$ for each $j$,
the right side is 
\[
O\big( \PR (Y_1=m_1,\ldots,Y_r=m_r) \big),
\] 
where for each $i$, $Y_i$ is Poisson with parameter $H(I_i)$,
and $Y_1,\ldots,Y_r$ are independent.
Thus, Theorem \ref{cycles_sets} gives an upper bound for counts of 
cycle lengths in sets $I_1,\ldots,I_r$ of the expected order (up to 
a constant factor) according to the Poisson model.   As a special case of one set $I_1=I$, we
obtain:

\begin{cor}\label{cycles-single-set}
For any $I\subset [n]$ and $m\ge 0$,
\[
\PR\( C_{I}(\sigma)=m \) \le \frac{\er^{H_n-H(I)}}{n} \cdot \frac{H(I)^m}{m!}
\Bigg( \one(I\ne [n]) + \frac{m}{H(I)} \Bigg).
\]
In particular,
\[
\PR( C_I(\sigma)=0 ) \le \frac{\er^{H_n-H(I)}}{n} = \er^{\gamma-H(I)}(1+O(1/n)).
\]
\end{cor}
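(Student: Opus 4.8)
The plan is to derive Corollary~\ref{cycles-single-set} directly from Theorem~\ref{cycles_sets} by specializing to $r=1$, together with a routine asymptotic evaluation of the final assertion. First I would set $r=1$, $I_1 = I$, $m_1 = m$ in Theorem~\ref{cycles_sets}. The statement there gives
\[
\PR(C_I(\sigma)=m) \le \frac{\er^{H_n}}{n}\cdot \frac{H(I)^m}{m!}\,\er^{-H(I)}\cdot\Bigl(\eps + \frac{m}{H(I)}\Bigr),
\]
where $\eps = 0$ if $I=[n]$ and $\eps=1$ otherwise, i.e.\ $\eps = \one(I\ne[n])$. Pulling the factor $\er^{-H(I)}$ together with $\er^{H_n}$ yields $\er^{H_n - H(I)}/n$, and this is exactly the claimed bound. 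So the first displayed inequality of the corollary is immediate; there is essentially nothing to prove beyond rewriting.

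For the $m=0$ case, plugging $m=0$ into the bound just obtained gives
\[
\PR(C_I(\sigma)=0) \le \frac{\er^{H_n-H(I)}}{n}\cdot\frac{H(I)^0}{0!}\,\Bigl(\one(I\ne[n]) + 0\Bigr) \le \frac{\er^{H_n-H(I)}}{n},
\]
where I use $\one(I\ne[n]) \le 1$ (and note that when $I=[n]$ the left side is $1/n! \le \er^{H_n-H_n}/n = 1/n$, consistent with the bound, so the inequality holds in all cases). This establishes the first half of the ``in particular'' claim.

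For the second half, I would use the standard fact $H_n = \log n + \gamma + O(1/n)$, so that $\er^{H_n} = n\,\er^{\gamma}(1 + O(1/n))$, hence
\[
\frac{\er^{H_n - H(I)}}{n} = \frac{\er^{H_n}}{n}\,\er^{-H(I)} = \er^{\gamma - H(I)}\bigl(1 + O(1/n)\bigr),
\]
which is the displayed asymptotic. This uses only the elementary estimate for the harmonic numbers, already invoked in~\eqref{exp-cyclestom}.

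Honestly, there is no real obstacle here: the corollary is a pure specialization of Theorem~\ref{cycles_sets}, whose proof carries all the weight. The only point requiring the smallest amount of care is making sure the bound in the $m=0$, $I=[n]$ corner case is still valid (there the parenthetical factor $\eps + m/H(I)$ vanishes, but so does the probability up to the $1/n!$ versus $1/n$ slack), and double-checking that the constant $\gamma$ and the $O(1/n)$ error propagate correctly through the single exponential. I would present the argument in three short lines exactly as above.
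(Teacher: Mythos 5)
Your proof is correct and is exactly the paper's route: the corollary is presented there as the immediate specialization $r=1$, $I_1=I$, $m_1=m$ of Theorem~\ref{cycles_sets} (so $\eps=\one(I\ne[n])$), with the final asymptotic coming from $H_n=\log n+\gamma+O(1/n)$ as in Lemma~\ref{harmonic}. One harmless slip in your aside: for $I=[n]$ the probability $\PR(C_{[n]}(\sigma)=0)$ equals $0$ (every permutation of $[n]$, $n\ge 1$, has at least one cycle), not $1/n!$ --- the value $1/n!$ is the probability for $I=\{2,\ldots,n\}$ --- but this does not affect the validity of your argument.
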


The first estimate is asymptotically sharp in the case $I=[n]$
and $m=o(\log n)$ as $n\to \infty$; see \eqref{lower-smallk} below 
for a corresponding lower bound.

A slight improvement of the final estimate, namely
$\PR(C_I(\sigma)=0) \le \er^{\gamma-H(I)}$,
is given in \cite{GPU} using different methods.

Theorem \ref{cycles_sets} becomes less accurate
when $m_j$ is much larger than $H(I_j)$, however it still gives
roughly the right rate of decay; e.g. when $I=[n]$ and $k=n$,
$\PR(C(\sigma)=n)=1/n!$ while the right side is
$O(H_n^{n-1}/n!)$.

Corollary \ref{cycles-single-set} is a permutation analog of 
the Hardy-Ramanujan \cite{HR17} inequality
\[
\# \{n\le x:n\text{ has exactly }k\text{ distinct prime factors} \} \le C_1 \frac{x}{\log x}\; \frac{(\log\log x+C_2)^{k-1}}{(k-1)!},
\]
where $C_1,C_2$ are certain absolute constants.

Theorem \ref{cycles_sets} is a useful tool for showing
that cycle counts cannot vary too much from their means.  Specifically, the local statistics obey
the same tail bounds as the Poisson distribution,
cf. Lemma \ref{Poisson_tails}.

\begin{thm}\label{single-set-tails}
Let $I$ be a nonempty subset of $[n]$. 
For $0\le \lambda \le 1$ we have
\[
\PR\big( C_I(\sigma) \le \lambda H(I) \big) \le 2 \er^{1-Q(\lambda)H(I)},
\]
where 
\[
Q(\lambda)=\lambda \log\lambda - \lambda+1 \ge 0.
\]
For $\lambda \ge 1$ we have
\[
\PR\big( C_I(\sigma) \ge \lambda H(I)+1 \big) \le 2 \er^{1-Q(\lambda)H(I)}.
\]
Lastly, when $0\le \psi \le \sqrt{H(I)}$,
\[
 \PR \( |C_{I}(\sigma) - H(I)| \ge  \psi \sqrt{H(I)} \)
\le 20 \er^{-\frac13 \psi^2}. 
 \]
\end{thm}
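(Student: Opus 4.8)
The plan is to derive all three tail bounds from the binomial-moment estimate of Theorem \ref{cycles_sets_expectation} by the standard ``method of moments'' for Poisson-type concentration, which is essentially the same as the Chernoff bound but phrased through factorial moments. Write $H=H(I)$ and $C=C_I(\sigma)$ for brevity. For any real $t$, the key inequality is that $\binom{x}{m} t^m$ summed over $m\ge 0$ controls $(1+t)^x$ from above or below depending on the sign pattern, but cleaner is to use: for a nonnegative integer-valued random variable $C$ one has, for $0<z\le 1$,
\[
\E z^{C} = \sum_{m\ge 0} \E\binom{C}{m}(z-1)^m \le \sum_{m\ge 0}\frac{H^m}{m!}(z-1)^m = \er^{H(z-1)},
\]
where the inequality uses Theorem \ref{cycles_sets_expectation} with $k=1$ together with the fact that the terms $(z-1)^m$ alternate in sign and the even partial sums dominate — one must be slightly careful and instead group terms in consecutive pairs, using that $\E\binom{C}{m}\le H^m/m!$ and that the Poisson series itself is an exact equality, so that truncating both after an even number of terms preserves the inequality. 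For $z\ge 1$ the same computation gives $\E z^C \le \er^{H(z-1)}$ directly since now every term is nonnegative and bounded termwise. Thus $C$ has a sub-Poisson moment generating function on both sides.

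Given the MGF bound $\E z^C \le \er^{H(z-1)}$ for all $z>0$, the lower-tail and upper-tail estimates follow by Markov's inequality in the usual Chernoff way. For the lower tail with $0\le\lambda\le 1$: for $0<z\le 1$,
\[
\PR(C\le \lambda H) = \PR(z^{C}\ge z^{\lambda H}) \le z^{-\lambda H}\E z^{C} \le z^{-\lambda H}\er^{H(z-1)} = \exp\{H(z-1-\lambda\log z)\},
\]
and optimizing over $z\in(0,1]$ at $z=\lambda$ gives exponent $H(\lambda-1-\lambda\log\lambda)=-Q(\lambda)H$. This yields $\PR(C\le\lambda H)\le \er^{-Q(\lambda)H}$, which is even better than the claimed $2\er^{1-Q(\lambda)H}$; the extra slack is harmless. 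For the upper tail with $\lambda\ge 1$: for $z\ge 1$,
\[
\PR(C\ge \lambda H+1) \le z^{-(\lambda H+1)}\er^{H(z-1)} \le z^{-\lambda H}\er^{H(z-1)} = \exp\{H(z-1-\lambda\log z)\},
\]
and again $z=\lambda$ gives exponent $-Q(\lambda)H$. (The ``$+1$'' in the event and the constant $2$ are there to absorb the discreteness: strictly one should apply the bound at $C\ge \lceil\lambda H\rceil$ and note $\lceil\lambda H\rceil \ge \lambda H$, or simply keep the cruder form; the factor $\er^1$ and the $2$ give ample room.)

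For the final two-sided Gaussian-type bound, I would specialize: with $0\le\psi\le\sqrt H$ set $\lambda_\pm = 1\pm\psi/\sqrt H$, so $\lambda_+\le 2$ and $\lambda_-\ge 0$, and combine the two tail bounds. The point is the elementary estimate $Q(1+u)\ge u^2/(2(1+u/3))\ge \tfrac13 u^2$ for $-1\le u\le 1$ (and in fact for $u\ge -1$ on the relevant range), which upon substituting $u=\pm\psi/\sqrt H$ gives $Q(\lambda_\pm)H \ge \tfrac13\psi^2$. Summing $\PR(C\le \lambda_- H)+\PR(C\ge \lambda_+ H+1)$ and bounding each by $2\er^{1-Q(\lambda_\pm)H}\le 2\er^{1-\psi^2/3}\le (2\er)\cdot 2\cdot\er^{-\psi^2/3}$, absorbing $2\cdot 2\er < 20$, yields the stated constant $20$. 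One small point to handle: the event $\{|C-H|\ge\psi\sqrt H\}$ must be split so that its upper part is contained in $\{C\ge \lambda_+ H + 1\}$ — this needs $\psi\sqrt H \ge 1$ or else a direct check when $\psi\sqrt H<1$ (where the bound $20\er^{-\psi^2/3}\ge 20\er^{-1/3}>1$ is trivial).

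The main obstacle I anticipate is not any single step but rather being careful with the discreteness throughout: the alternating-series justification of $\E z^C\le\er^{H(z-1)}$ for $z<1$ (pairing terms correctly and invoking the equality case of Theorem \ref{cycles_sets_expectation} only as an upper bound, never needing its exact form), and matching the integer threshold of $C$ to the real quantities $\lambda H$ and $\psi\sqrt H$. All of this is absorbed by the generous constants ($\er^1$ and $2$, then $20$), so the arithmetic is routine once the Chernoff skeleton and the inequality $Q(1+u)\ge u^2/3$ are in place.
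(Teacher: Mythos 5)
Your upper-tail half is sound: for $z\ge 1$ the identity $z^{C}=\sum_m\binom{C}{m}(z-1)^m$ has nonnegative terms, so Theorem \ref{cycles_sets_expectation} gives $\E z^{C_I(\sigma)}\le \er^{H(I)(z-1)}$, and Chernoff at $z=\lambda$ yields the bound for $\lambda\ge 1$ (this is essentially Theorem \ref{cycles-set-strict} in generating-function form). The genuine gap is the lower tail. Upper bounds on factorial moments alone can never control a lower tail (a variable with all its mass at $0$ satisfies every such upper bound), and your proposed inequality $\E z^{C_I(\sigma)}\le\er^{H(I)(z-1)}$ for $0<z\le 1$ is in fact false for cycle counts: take $I=\{1\}$ and $n$ even; letting $z\to 0^+$, the left side tends to the derangement probability $\sum_{j=0}^{n}(-1)^j/j!>\er^{-1}$, which exceeds the right side's limit $\er^{-H(I)}=\er^{-1}$. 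Consequently your claimed conclusion $\PR(C_I(\sigma)\le\lambda H(I))\le\er^{-Q(\lambda)H(I)}$ already fails at $\lambda=0$ (the paper points out that even the sharpest known bound here is $\er^{\gamma-H(I)}$, from \cite{GPU} --- being ``better than the theorem'' should have been a warning sign). The pairing/Bonferroni truncation cannot rescue this: truncating $\sum_m\binom{C}{m}(z-1)^m$ at an even index does give a pointwise upper bound, but to dominate that truncation by the truncated Poisson series you need lower bounds (in fact equalities) for the odd factorial moments, and Theorem \ref{cycles_sets_expectation} gives equality only when $m\max(I)\le n$ --- exactly the condition that fails when $I$ contains long cycle lengths, which is the hard case. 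So, contrary to your remark, the exact form of the moments (or some substitute) is indispensable for the lower tail.

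This is precisely why the paper does not argue through the moment generating function for the lower tail: it first proves the local bound of Corollary \ref{cycles-single-set} (which rests on the exact identity of Lemma \ref{lem:CIjmj}, not merely on moment upper bounds), giving $\PR(C_I(\sigma)=m)\le \er\,\er^{-H(I)}\frac{H(I)^m}{m!}\bigl(\one(I\ne[n])+\tfrac{m}{H(I)}\bigr)$, and then sums over $m\le\lambda H(I)$ using Norton's Poisson tail estimate (Lemma \ref{Poisson_tails}); the factor $2\er$ in the statement is exactly the price of $\er^{H_n}/n\le\er$ and of the $(1+m/H(I))$ term. Your reduction of the third bound to the first two via $\lambda^{\pm}$ and \eqref{Qx_crude} matches the paper, except that $\lambda^{+}$ must be defined by $\lambda^{+}H(I)+1=H(I)+\psi\sqrt{H(I)}$ (not $\lambda^{+}=1+\psi/\sqrt{H(I)}$) so that the upper part of the event is actually contained in $\{C_I(\sigma)\ge\lambda^{+}H(I)+1\}$; as written, the containment fails even when $\psi\sqrt{H(I)}\ge 1$. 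To repair the proposal you would need to import the local estimates of Theorem \ref{cycles_sets} (or some other genuine lower-tail input); the binomial-moment upper bounds by themselves cannot do it.
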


The function $Q$ is non-negative and satisfies $Q(x) \approx \frac12 (x-1)^2$
for $x$ near $1$; see also the inequality \eqref{Qx_crude} below.

When $\lambda$ is close to 1, we can be much more precise,
showing a Central Limit Theorem for $C_I(\sigma)$; see
Theorem \ref{CLT_cycles} below.

Specializing to cycle lengths in a single interval $I=[a,b]\cap \NN$,
and using that $H(I) \approx \log(b/a)$,
we obtain the following very useful estimates.

\begin{thm}\label{cyclesinterval}
Let $a,b$ be real numbers with
 $1\le a<b\le n$ and set $I=[a,b]\cap \NN$.
Uniformly for $0 \le \lam \le 1$, we have
\[
\PR \( C_{I}(\sigma) \le \lam \log (b/a) \) = O\( (b/a)^{-Q(\lam)} \).
\] 
Let $\lambda_0>1$.  Uniformly for $1 \le \lam \le \lam_0$, 
\[
\PR \( C_{I}(\sigma) \ge \lam \log (b/a) \) =O_{\lambda_0} \((b/a)^{-Q(\lam)}\).
\]
In particular, uniformly for
$0\le \psi \le \sqrt{\log(b/a)}$, 
\[
 \PR \( |C_{I}(\sigma) - \log (b/a)| \ge  \psi \sqrt{\log (b/a)} \)=O\big(
 \er^{-\frac13 \psi^2} \big).
\]
\end{thm}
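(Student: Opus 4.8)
The plan is to deduce Theorem \ref{cyclesinterval} directly from Theorem \ref{single-set-tails} applied to the set $I=[a,b]\cap\NN$, using only elementary estimates on $H(I)$. The one thing that requires care is converting bounds phrased in terms of the harmonic-type sum $H(I)=\sum_{a\le m\le b}1/m$ into bounds phrased in terms of $\log(b/a)$, since these two quantities differ by a bounded additive amount that we must control uniformly. The crucial inequalities are: first, the classical fact that $\log(b/a) - 1 \le \log\frac{\fl b+1}{\cl a} \le H(I) \le \log\frac{b}{a-1}$ when $a\ge 2$ (and a trivial variant when $a=1$), so that $H(I) = \log(b/a) + O(1)$, and second, that $H(I) \ge c\log(b/a)$ for some absolute constant $c>0$ whenever, say, $b\ge 2a$, while if $b < 2a$ then $\log(b/a) < \log 2$ is bounded and the claimed estimates are trivial because the implied constant can absorb a bounded power of $b/a$. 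So throughout I may assume $b\ge 2a$, hence $H(I) \asymp \log(b/a)$ with both implied constants absolute.

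First I would handle the lower-tail estimate. Fix $0\le\lam\le 1$ and write $L=\log(b/a)$, $h=H(I)$. We want $\PR(C_I(\sigma)\le \lam L) = O((b/a)^{-Q(\lam)})$. If $\lam L \le \lam h$ we simply apply the first bound of Theorem \ref{single-set-tails} with parameter $\lam$, getting $\PR(C_I\le\lam h)\le 2\er^{1-Q(\lam)h} = 2\er\cdot\er^{-Q(\lam)h}$, and then replace $h$ by $L$ at the cost of a bounded factor: since $h = L+O(1)$ and $Q(\lam)\le Q(0)=1$ is bounded, $\er^{-Q(\lam)h} = \er^{-Q(\lam)L}\cdot \er^{-Q(\lam)(h-L)} \ll \er^{-Q(\lam)L} = (b/a)^{-Q(\lam)}$. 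If instead $\lam L > \lam h$, i.e.\ $L>h$ — which cannot happen once $a\ge 2$, and for $a=1$ is a bounded discrepancy — one sets $\lam' = \lam L/h \ge \lam$; since $\lam \le 1$ we might have $\lam'>1$, in which case the probability is at most $1 \ll (b/a)^{-Q(\lam)}$ trivially because then $Q(\lam)L \le Q(\lam)h\cdot(L/h)$ is bounded (as $\lam'>1$ forces $h$ bounded). Otherwise apply Theorem \ref{single-set-tails} with $\lam'$ and note $Q(\lam')h = Q(\lam')(\lam L/\lam') \ge$ — here I would use monotonicity/convexity of $Q$ near the relevant range, or more simply just note that since $h=L+O(1)$ this borderline case only costs a bounded factor. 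The cleanest write-up: observe $\PR(C_I\le\lam L) \le \PR(C_I \le \lam' h)$ where $\lam' := \min(1, \lam L/h)$, apply the theorem, and check $Q(\lam')h \ge Q(\lam)L - O(1)$ using $|h-L|=O(1)$ and boundedness of $Q$ and its derivative on $[0,1]$.

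The upper-tail estimate is entirely parallel, using the second bound of Theorem \ref{single-set-tails}: for $1\le\lam\le\lam_0$, write $\lam' = \lam L/h$; then $\PR(C_I(\sigma)\ge\lam L) \le \PR(C_I(\sigma)\ge \lam' h) \le \PR(C_I(\sigma)\ge \lam' h + 1) + \PR(C_I(\sigma) = \text{integer in a short window})$ — actually it is cleaner to bump $\lam'$ slightly to absorb the "$+1$" since $h \asymp L$ and $1 = O(1)$, replacing $\lam'$ by $\lam'' = \lam' + 1/h \le \lam'(1 + O(1/L))$, then invoking $\PR(C_I \ge \lam'' h + 1)\le 2\er^{1-Q(\lam'')h}$ and checking $Q(\lam'')h \ge Q(\lam)L - O_{\lam_0}(1)$; here the dependence on $\lam_0$ enters because $Q'$ is unbounded on $[1,\infty)$, but on the compact range $\lam\in[1,\lam_0]$ it is bounded by a constant depending on $\lam_0$, and $\lam''=\lam+O_{\lam_0}(1/L)$ stays in a bounded neighbourhood. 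Finally, the third display (the two-sided bound for $0\le\psi\le\sqrt{\log(b/a)}$) follows by applying the third bound of Theorem \ref{single-set-tails} with the parameter $\psi$ there replaced by $\psi\sqrt{L/h}$ (a bounded perturbation of $\psi$ since $h\asymp L$), after checking that the event $|C_I(\sigma)-L|\ge\psi\sqrt L$ is contained in $|C_I(\sigma)-h|\ge\psi\sqrt L - |h-L| \ge \psi\sqrt L - O(1)$, and that $\psi\sqrt L - O(1) \ge \tfrac12\psi\sqrt h$ when $\psi$ is not too small — while for small $\psi$ (say $\psi\le C$) the right-hand side $\er^{-\psi^2/3}$ is bounded below so the estimate is trivial. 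The main obstacle, such as it is, is purely bookkeeping: tracking how the additive slack $H(I)-\log(b/a) = O(1)$ and the additive "$+1$"s in Theorem \ref{single-set-tails} propagate through the nonlinear function $Q$, and isolating the genuinely trivial regimes ($b<2a$; $\lam$ near the boundary of its range; $\psi$ bounded) where the claimed bound holds with room to spare.
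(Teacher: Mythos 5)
Your overall strategy is the paper's own: deduce everything from Theorem \ref{single-set-tails} together with $H(I)=\log(b/a)+O(1)$, the only work being to push the additive $O(1)$ discrepancies (and the ``$+1$'' in the upper-tail bound) through the nonlinear function $Q$, after discarding the trivial regimes. However, two of your justifications are wrong as stated and need repair. In the lower tail you appeal to ``boundedness of $Q$ and its derivative on $[0,1]$'', but $Q'(x)=\log x$ is unbounded as $x\to0^+$, so the bare estimate $|Q(\lambda)-Q(\lambda')|\ll|\lambda-\lambda'|$ is not available. The step survives only because your perturbation is multiplicative, $|\lambda-\lambda'|\ll \lambda/\log(b/a)$, and $x\log(1/x)$ is bounded on $(0,1]$; this is exactly what inequality \eqref{Qnear0} encodes, and that is what you should invoke instead of a bounded derivative. (Similarly, in the borderline case $\lambda'=\lambda L/h>1$ your parenthetical ``forces $h$ bounded'' is not the right reason; what it forces is $1-\lambda\ll 1/\log(b/a)$, whence $Q(\lambda)\log(b/a)=O(1)$ by \eqref{Qx_crude} and the bound is trivial.)

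The second problem is in the upper tail: you absorb the ``$+1$'' of Theorem \ref{single-set-tails} by replacing $\lambda'$ with $\lambda''=\lambda'+1/h$, but then $\{C_I(\sigma)\ge\lambda'h\}\supseteq\{C_I(\sigma)\ge\lambda''h+1\}$, so the containment goes the wrong way and the displayed chain of inequalities is false. The correct move is to bump \emph{downward}, i.e.\ define $\lambda''$ by $\lambda\log(b/a)=\lambda''H(I)+1$ (as the paper does), and then one must separately dispose of the regime where $\lambda''<1$, equivalently $\lambda-1\ll 1/\log(b/a)$, where the theorem's hypothesis $\lambda''\ge1$ genuinely fails but $Q(\lambda)\log(b/a)=O(1)$ by \eqref{Qx_crude}, so the claimed bound holds trivially; your closing remark about ``$\lambda$ near the boundary of its range'' gestures at this, but it has to be made explicit. (Alternatively, your first, abandoned idea of adding a window term $\PR(\lambda'h\le C_I(\sigma)<\lambda'h+1)$ also works if that term is bounded via Corollary \ref{cycles-single-set}.) With these two repairs your argument coincides with the paper's proof.
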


In particular, taking $I=[n]$, we see that
 $C(\sigma)$ usually does not vary more that a constant times $\sqrt{\log n}$ from its mean $H_n$.

Theorems \ref{cycles-single-set} and \ref{single-set-tails}
are not very accurate when $H(I)<1$, especially in the case $m=1$.
In this case, we expect that $C_I(\sigma)$ will rarely be much more than 1.  The next Theorem gives an improved upper bound in this case.

\begin{thm}\label{cycles-set-strict}
 If $I$ is a nonempty susbet of $[n]$, and $k\ge 0$,
then
\[
\PR \( C_I(\sigma) \ge k \) \le \frac{H(I)^k}{k!}.
\]
\end{thm}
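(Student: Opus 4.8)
The plan is to deduce this from the binomial moment estimate of Theorem~\ref{cycles_sets_expectation}, using a single elementary pointwise inequality. The observation is that for any non-negative integer $m$ and any integer $k\ge 0$,
\[
\one(m\ge k)\le \binom{m}{k},
\]
since the right-hand side is a non-negative integer that is at least $1$ precisely when $m\ge k$ (and equals $0$ when $m<k$). Applying this with $m=C_I(\sigma)$ and taking expectations over $\sigma\in\cS_n$ gives
\[
\PR\big(C_I(\sigma)\ge k\big)=\E\,\one\big(C_I(\sigma)\ge k\big)\le \E\binom{C_I(\sigma)}{k}.
\]
It then remains to invoke Theorem~\ref{cycles_sets_expectation} in the special case $r=1$, $I_1=I$, $m_1=k$, which yields $\E\binom{C_I(\sigma)}{k}\le H(I)^k/k!$, completing the proof. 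The case $k=0$ is the trivial equality $1=H(I)^0/0!$.

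For a self-contained argument not relying on Theorem~\ref{cycles_sets_expectation}, one can prove the bound on $\E\binom{C_I(\sigma)}{k}$ directly by counting: $k!\binom{C_I(\sigma)}{k}$ is the number of ordered $k$-tuples $(\gamma_1,\dots,\gamma_k)$ of distinct cycles of $\sigma$ all of whose lengths lie in $I$, so $k!\,n!\,\E\binom{C_I(\sigma)}{k}$ counts tuples $(\sigma,\gamma_1,\dots,\gamma_k)$ of this form. Summing first over the lengths $j_1,\dots,j_k\in I$ (necessarily with $j_1+\cdots+j_k\le n$), the number of ways to choose the disjoint cycles $\gamma_i$ on labelled points is $\frac{n!}{(n-\sum j_i)!}\cdot\frac{1}{j_1\cdots j_k}$, and each such choice extends to $\sigma$ in exactly $(n-\sum j_i)!$ ways. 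Hence
\[
k!\,n!\,\E\binom{C_I(\sigma)}{k}\le \ssum{j_1,\dots,j_k\in I\\ j_1+\cdots+j_k\le n} \frac{n!}{j_1\cdots j_k}\le n!\,H(I)^k,
\]
which gives the same bound.

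There is no real obstacle here: the entire content is the pointwise inequality $\one(m\ge k)\le\binom{m}{k}$, after which the result is immediate from the already-established binomial moment bound. The only minor care needed in the direct route is the truncation $j_1+\cdots+j_k\le n$ (tuples with larger total length simply do not occur), but enlarging the sum to range over all of $I^k$ only weakens the inequality and decouples it into a product, so this causes no difficulty.
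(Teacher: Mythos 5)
Your argument is correct and is essentially the paper's own proof: the paper likewise bounds $\PR(C_I(\sigma)\ge k)\le \E\binom{C_I(\sigma)}{k}$ (implicitly via $\one(m\ge k)\le\binom{m}{k}$) and then applies Theorem~\ref{cycles_sets_expectation} with the single set $I$. Your additional self-contained counting argument is a fine rederivation of that moment bound but is not needed.
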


The proof is very short and we include it here.  
By Theorem \ref{cycles_sets_expectation},
\[
\PR ( C_I(\sigma) \ge k ) \le \E \binom{C_I(\sigma)}{k} \le \frac{H(I)^k}{k!}.\qedhere
\]

\begin{cor}\label{two-equal-large-cycles}
Let $2\le \ell \le n$.
The probability that a random
permutation $\sigma\in \cS_n$ has two cycles of the same length $j$
for some $j\ge \ell$, is at most $\frac{1}{2(\ell-1)}$.
\end{cor}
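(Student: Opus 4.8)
The plan is to combine a simple union bound with Theorem~\ref{cycles-set-strict}. For a fixed length $j$ with $\ell \le j \le n$, the event that $\sigma$ has two cycles of length $j$ is precisely the event $C_{\{j\}}(\sigma) \ge 2$. Applying Theorem~\ref{cycles-set-strict} with $I=\{j\}$ and $k=2$, and noting $H(\{j\}) = 1/j$, gives
\[
\PR\big( C_{\{j\}}(\sigma) \ge 2 \big) \le \frac{H(\{j\})^2}{2!} = \frac{1}{2j^2}.
\]

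Next I would sum over the relevant values of $j$. Since two distinct cycles of length $j$ require $2j \le n$, only $\ell \le j \le \fl{n/2}$ contribute, so the union bound yields
\[
\PR\big( \exists\, j\ge \ell : \sigma \text{ has two cycles of length } j \big)
\le \sum_{j=\ell}^{\fl{n/2}} \frac{1}{2j^2}
\le \frac12 \sum_{j \ge \ell} \frac{1}{j^2}.
\]
To get the stated constant exactly, the one point requiring (minor) care is to bound the tail of $\sum 1/j^2$ by a telescoping sum rather than by an integral: using $\frac{1}{j^2} \le \frac{1}{j(j-1)} = \frac{1}{j-1} - \frac{1}{j}$ for $j \ge 2$, one gets $\sum_{j\ge \ell} \frac{1}{j^2} \le \frac{1}{\ell-1}$, and hence the probability is at most $\frac{1}{2(\ell-1)}$.

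There is no real obstacle here; the corollary is an immediate consequence of Theorem~\ref{cycles-set-strict}, and the only thing to watch is choosing the slightly sharper series estimate $\sum_{j\ge\ell} 1/j^2 \le 1/(\ell-1)$ so that the final bound matches the claimed $\frac{1}{2(\ell-1)}$.
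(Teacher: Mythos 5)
Your argument is correct and is essentially identical to the paper's own proof: apply Theorem~\ref{cycles-set-strict} with $I=\{j\}$, $k=2$ to get $\PR(C_j(\sigma)\ge 2)\le \frac{1}{2j^2}$, then sum over $j\ge\ell$ using $\frac{1}{j^2}\le\frac{1}{j(j-1)}$ and telescoping. Nothing is missing.
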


Again, the proof is very short:
By Theorem \ref{cycles-set-strict}, $\PR( C_j(\sigma)\ge 2) \le \frac{1}{2j^2}$.  Summing over $j\ge \ell$ we find that
\[
\PR (C_j(\sigma)\ge 2\text{ for some }j\ge \ell ) \le \sum_{j=\ell}^\infty
\frac{1}{2j^2} \le \frac12 \sum_{j=\ell}^\infty \frac{1}{j(j-1)} = \frac{1}{2(\ell-1)}.
\]

Next, we take a first look at the \emph{random sequence}
$C_{[m]}(\sigma)$ $(1\le m\le n)$ for 
$\sigma\in \cS_n$.
As long as $m$ is not too small,
it is relatively easy to deduce from Theorem \ref{cyclesinterval}
 that $C_{[m]}(\sigma)$ is \emph{uniformly}
close to $\log m$ for most $\sigma\in \cS_n$.

\begin{thm}\label{cyclesinterval_allm}
Let $2\le \xi \le n$.  With probability $1-O(1/(\log \xi)^{1/3})$, we have 
\[
|C_{[m]} - \log m| < 2\sqrt{\log m \log\log m} \quad (\xi \le m\le n).
\]
\end{thm}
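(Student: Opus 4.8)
The plan is to derive the uniform‑in‑$m$ bound from finitely many applications of the last estimate of Theorem~\ref{cyclesinterval}, stitched together by the monotonicity of $m\mapsto C_{[m]}(\sigma)$. One may assume $\xi$ exceeds a suitable absolute constant $\xi_0$, since otherwise $(\log\xi)^{-1/3}$ exceeds $1$ for a large enough implied constant and the statement is vacuous. Pick \emph{checkpoints} $\fl{\xi}=m_0<m_1<\cdots<m_T=n$ that increase at least geometrically but only by a bounded factor, so that $\log m_{i+1}-\log m_i\le G$ for some absolute constant $G$ while $\log m_i\ge\log\xi+i-O(1)$ for all $i\le T$.

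At each checkpoint I apply the last estimate of Theorem~\ref{cyclesinterval} with $a=1$, $b=m_i$ (so $I=[m_i]$ and $\log(b/a)=\log m_i$), taking
\[
\psi=\psi_i:=2\sqrt{\log\log m_i}-\frac{K}{\sqrt{\log m_i}},
\]
where $K$ is a sufficiently large absolute constant fixed below. For $\xi_0$ large one has $0\le\psi_i\le\sqrt{\log m_i}$, so the estimate applies and yields
\[
\PR\Bigl(|C_{[m_i]}(\sigma)-\log m_i|\ge 2\sqrt{\log m_i\log\log m_i}-K\Bigr)=O\bigl(\er^{-\psi_i^2/3}\bigr).
\]
Since $\psi_i^2=4\log\log m_i-O\bigl(\sqrt{(\log\log m_i)/\log m_i}\,\bigr)$, the right‑hand side is $\ll(\log m_i)^{-4/3}$, and therefore, summing over checkpoints and invoking $\log m_i\ge\log\xi+i-O(1)$,
\[
\sum_{i=0}^{T}\er^{-\psi_i^2/3}\ \ll\ \sum_{j\ge 0}\bigl(\log\xi+j\bigr)^{-4/3}\ \ll\ (\log\xi)^{-1/3}.
\]
Consequently there is an event $\cE$ with $\PR(\cE)=1-O\bigl((\log\xi)^{-1/3}\bigr)$ on which $|C_{[m_i]}(\sigma)-\log m_i|<2\sqrt{\log m_i\log\log m_i}-K$ for every checkpoint.

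To interpolate, work on $\cE$, fix an integer $m\in[\xi,n]$, and choose $i$ with $m_i\le m\le m_{i+1}$. Monotonicity gives $C_{[m_i]}(\sigma)\le C_{[m]}(\sigma)\le C_{[m_{i+1}]}(\sigma)$, so
\[
C_{[m]}(\sigma)-\log m\ \le\ (\log m_{i+1}-\log m_i)+2\sqrt{\log m_{i+1}\log\log m_{i+1}}-K,
\]
and symmetrically $C_{[m]}(\sigma)-\log m\ \ge\ -(\log m_{i+1}-\log m_i)-2\sqrt{\log m_i\log\log m_i}+K$. Since $t\mapsto\sqrt{t\log t}$ is increasing with derivative tending to $0$, for $\xi_0$ large we have $\sqrt{\log m_{i+1}\log\log m_{i+1}}\le\sqrt{\log m\log\log m}+1$ while $\sqrt{\log m_i\log\log m_i}\le\sqrt{\log m\log\log m}$; taking $K$ large in terms of $G$ (still absolute), both displays give $|C_{[m]}(\sigma)-\log m|<2\sqrt{\log m\log\log m}$, with a little slack to spare. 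The leftover slack also lets one pass from integer $m$ to real $m$ via $C_{[m]}=C_{[\fl m]}$. This holds for every $m\in[\xi,n]$, which is the claim.

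The point needing care is the calibration in the middle step. The $O(1)$ room that the interpolation forces into the checkpoint estimates shifts $\psi_i$ only by $O(1/\sqrt{\log m_i})$, hence perturbs $\er^{-\psi_i^2/3}$ by a factor $1+o(1)$, so the series still sums to order $(\log\xi)^{-1/3}$ rather than to something larger. This is precisely why the constant $2$ is paired with the exponent $\tfrac13$: with $\psi_i\sim 2\sqrt{\log\log m_i}$ the summand is $(\log m_i)^{-4/3}$ and $\sum_j(\log\xi+j)^{-4/3}\asymp(\log\xi)^{-1/3}$, whereas a constant $c<2$ would give summand $(\log m_i)^{-c^2/3}$ and a weaker conclusion (and divergence once $c\le\sqrt3$).
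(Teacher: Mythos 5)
Your proposal is correct and follows essentially the same route as the paper: geometric checkpoints, an application of the last estimate of Theorem \ref{cyclesinterval} at each checkpoint with a slightly reduced threshold (the paper subtracts $1$ where you subtract a constant $K$), a sum of terms of size $(\log\xi+i)^{-4/3}$ giving $O((\log\xi)^{-1/3})$, and interpolation between checkpoints via the monotonicity of $m\mapsto C_{[m]}(\sigma)$.
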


Our proof is based on the analogous proof for the normal 
distribution of prime factors of integers given in
\cite[Ch. 1]{Divisors}.
When $m$ is bounded, $C_{[m]}(\sigma)$ has a discrete distribution
which is approximately Poisson with parameter $H_m$.
Slightly better bounds than those in Theorem
 \ref{cyclesinterval_allm} are attainable, based on ideas stemming from the Law of the Iterated Logarithm from probability
theory.  Essentially one can replace the factor  $\log\log m$ with $\log\log\log m$. See e.g., \cite{Manst-LIL} for a specific statement;
see also \cite[Theorem 11]{Divisors} for the analogous 
statement and proof for prime factors of integers.

Theorem \ref{cyclesinterval_allm} 
also tells us about the normal behavior of $D_j(\sigma)$, the length of the $j$-th smallest cycle of $\sigma$ (note that $D_j(\sigma)=D_{j+1}(\sigma)$ for some $j$ when $\sigma$ has cycles of the same length).
Since a typical permutation
$\sigma\in \cS_n$ has about $\log m$ cycles of length $\le m$,
we expect that $D_j(n) \approx \er^j$.

\begin{thm}\label{Djn}
Let $1\le \theta \le \log n$.  With probability $1-O(\theta^{-1/3})$, we have
\[
|\log D_j(\sigma) - j| < 3\sqrt{j\log j} \qquad (\theta \le j \le C(\sigma)).
\]
\end{thm}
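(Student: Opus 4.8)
The plan is to deduce Theorem~\ref{Djn} from Theorem~\ref{cyclesinterval_allm} by the standard duality between a counting function and its ``inverse''. The key observation is the equivalence of events
\[
D_j(\sigma) \le m \iff C_{[m]}(\sigma) \ge j,
\]
valid for every $m$ and $j$ (the $j$-th smallest cycle has length at most $m$ precisely when there are at least $j$ cycles of length at most $m$). So an upper bound on $D_j$ follows from a lower bound on $C_{[m]}$ at a suitable $m$, and a lower bound on $D_j$ follows from an upper bound on $C_{[m]}$.

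First I would apply Theorem~\ref{cyclesinterval_allm} with $\xi = \er^{\theta/2}$, say (so that $\log\xi \asymp \theta$, giving the claimed failure probability $O(\theta^{-1/3})$), to fix a ``good'' event $\mathcal{G}$ of probability $1 - O(\theta^{-1/3})$ on which
\[
|C_{[m]}(\sigma) - \log m| < 2\sqrt{\log m\,\log\log m} \qquad (\xi \le m \le n).
\]
Working on $\mathcal{G}$, I want to show $|\log D_j(\sigma) - j| < 3\sqrt{j\log j}$ for all $\theta \le j \le C(\sigma)$. For the upper bound on $\log D_j$: set $m_+ = \fl{\exp(j + 3\sqrt{j\log j})}$; one checks $m_+ \ge \xi$ (since $j \ge \theta$) and $m_+ \le n$ may be assumed (if $m_+ > n$ the bound $\log D_j \le \log n < j + 3\sqrt{j\log j}$ is automatic, as $D_j \le n$ always). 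Then on $\mathcal{G}$,
\[
C_{[m_+]}(\sigma) > \log m_+ - 2\sqrt{\log m_+\,\log\log m_+} \ge j,
\]
where the last inequality is a routine estimate: $\log m_+ \approx j + 3\sqrt{j\log j}$, $\log\log m_+ = \log j + O(1)$, so the correction term $2\sqrt{\log m_+\log\log m_+}$ is roughly $2\sqrt{j\log j}$, comfortably smaller than the gap $3\sqrt{j\log j}$ for $j$ large (and the constant $\theta \ge 1$ together with the implied absolute constants can be arranged — one may need to enlarge the failure-probability constant or assume $j$ exceeds an absolute threshold, absorbing small $j$ into the $O(\theta^{-1/3})$ term). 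Hence $D_j(\sigma) \le m_+$, i.e. $\log D_j(\sigma) < j + 3\sqrt{j\log j}$. Symmetrically, for the lower bound set $m_- = \cl{\exp(j - 3\sqrt{j\log j})}$; if $m_- < \xi$ then $j < \theta + O(\sqrt{\theta\log\theta})$ and the bound is vacuous after adjusting constants, so assume $m_- \ge \xi$. On $\mathcal{G}$,
\[
C_{[m_-]}(\sigma) < \log m_- + 2\sqrt{\log m_-\,\log\log m_-} < j,
\]
again by the same routine size estimate, so $D_j(\sigma) > m_-$, giving $\log D_j(\sigma) > j - 3\sqrt{j\log j}$.

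The main obstacle is purely bookkeeping: verifying that the constant $3$ on $\sqrt{j\log j}$ in Theorem~\ref{Djn} genuinely dominates the constant $2$ coming from Theorem~\ref{cyclesinterval_allm} after one passes through the exponential change of variables, uniformly in the stated range $\theta \le j \le C(\sigma)$ and for all admissible $\theta$. The point is that $\log m_\pm = j \pm 3\sqrt{j\log j}(1+o(1))$ and $\log\log m_\pm = \log j + O(1)$, so the error term transforms as $2\sqrt{\log m_\pm \log\log m_\pm} = 2\sqrt{j\log j}\,(1+o(1))$, and the slack of $(3-2)\sqrt{j\log j} = \sqrt{j\log j}$ absorbs all lower-order terms once $j$ is larger than an absolute constant; the finitely many small values of $j$ (bounded in terms of that constant) contribute only to the $O(\theta^{-1/3})$ error since $\theta \le j$. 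One should also note the edge cases where $m_+$ exceeds $n$ or $m_-$ falls below $\xi$, handled as indicated above by the trivial bounds $1 \le D_j(\sigma) \le n$ and the definition of the range of $j$. No other input beyond Theorem~\ref{cyclesinterval_allm} and elementary inequalities is needed.
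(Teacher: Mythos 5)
Your proposal is correct, and it inverts the key estimate in a different way from the paper. Both arguments rest on the single good event supplied by Theorem \ref{cyclesinterval_allm} (with $\log\xi$ of order $\theta$), but the paper additionally invokes Corollary \ref{two-equal-large-cycles} to ensure that all cycle lengths $\ge\xi$ are distinct; this lets it set $m=D_j(\sigma)$, use the exact identity $j=C_{[D_j(\sigma)]}(\sigma)$ (which fails under ties at length $D_j$), note $\log D_j(\sigma)\le 2j$ on the good event, and convert $2\sqrt{\log D_j\log\log D_j}\le 2\sqrt{2j\log(2j)}<3\sqrt{j\log j}$. You instead exploit the tie-proof duality $D_j(\sigma)\le m\iff C_{[m]}(\sigma)\ge j$ at the deterministic test points $m_\pm=\exp\{j\pm3\sqrt{j\log j}\}$ (suitably rounded), so Corollary \ref{two-equal-large-cycles} is not needed at all and the extra factor $O(1/\xi)$ in the exceptional probability disappears; the price is the change-of-variables bookkeeping you describe, which does go through since $\log m_\pm=j(1+o(1))$ and $\log\log m_\pm=\log j+O(1)$, leaving slack $\sqrt{j\log j}$ once $j$ exceeds an absolute constant, and small $\theta$ (hence small $j$) is absorbed by making the $O(\theta^{-1/3})$ constant large, exactly as in the paper's opening reduction. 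One intermediate claim of yours is imprecise: from $m_-<\xi=\er^{\theta/2}$ and $j\ge\theta$ one gets $j-3\sqrt{j\log j}<\theta/2\le j/2$, hence $j<36\log j$, i.e.\ $j=O(1)$ (not merely $j<\theta+O(\sqrt{\theta\log\theta})$); this is in fact stronger and immediately justifies your "assume $m_-\ge\xi$", so the slip is harmless, but you should state the deduction this way.
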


We conclude this subsection with a sharp lower bound for $\PR(C(\sigma)=k)$.
This estimate is not new, but will be needed 
in section \ref{sec:conditioning}.

\begin{thm}\label{allsets-local}
We have
\be\label{lower-smallk}
\PR(C(\sigma)=k) \ge \frac{H_n^{k-1}}{n(k-1)!}\(1- \frac{k-1}{\log n}\) \qquad (1\le k < \log n).
\ee
For each fixed  $A>1$, there is a 
constant $c(A)>0$ such that for large enough $n$ (depending on $A$),
\[
\PR(C(\sigma)=k) \ge c(A) \frac{H_n^{k-1}}{(k-1)!}\er^{-H_n}
\qquad (1\le k\le A\log n).
\]
\end{thm}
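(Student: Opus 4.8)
The backbone of both estimates is the exact identity
\[
\PR_n\bigl(C(\sigma)=k\bigr)=\frac1n\,e_{k-1}\Bigl(1,\tfrac12,\ldots,\tfrac1{n-1}\Bigr),
\]
$e_{k-1}$ being the $(k-1)$-st elementary symmetric polynomial. It drops out of Gruder's Theorem~\ref{thm:genfcn} with $I=\NN$: that gives $\sum_{k}\sum_n\PR_n(C(\sigma)=k)x^ny^k=(1-x)^{-y}$, hence $\sum_k\PR_n(C(\sigma)=k)y^k=\binom{y+n-1}{n}=\frac yn\prod_{j=1}^{n-1}(1+\tfrac yj)$, and one reads off the coefficient of $y^k$. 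Throughout write $H_{n-1}=H_n-\tfrac1n=\sum_{j<n}\tfrac1j\ge\log n$ and $P_i=\sum_{j<n}j^{-i}$, so $P_2<\pi^2/6<2$.

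For \eqref{lower-smallk} I would bound $e_{k-1}$ below by a second-moment inequality. Summing over injective maps $f\colon\{1,\ldots,k-1\}\to\{1,\ldots,n-1\}$ gives $(k-1)!\,e_{k-1}(1,\tfrac12,\ldots,\tfrac1{n-1})=P_1^{k-1}-\sum_{f\text{ not injective}}\prod_i(f(i))^{-1}$, and a union bound over the collided pair yields $\sum_{f\text{ not injective}}\prod_i(f(i))^{-1}\le\binom{k-1}{2}P_2P_1^{k-3}$. Using $(a-b)^m\ge a^m-mba^{m-1}$ and $P_1=H_n-\tfrac1n\le H_n$ this becomes
\[
(k-1)!\,e_{k-1}\Bigl(1,\tfrac12,\ldots,\tfrac1{n-1}\Bigr)\ \ge\ H_n^{k-1}\Bigl(1-\tfrac{k-1}{nH_n}-\tbinom{k-1}{2}\tfrac{P_2}{H_n^2}\Bigr),
\]
and it remains to check $\tfrac{k-1}{nH_n}+\binom{k-1}{2}\tfrac{P_2}{H_n^2}\le\tfrac{k-1}{\log n}$. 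For $k\le2$ both error terms vanish; for $k\ge3$ (so $\log n>3$), bounding $P_2<2$, $H_n\ge\log n$, and $k-2<\log n-2$ one gets $\tfrac{k-1}{nH_n}+\binom{k-1}{2}\tfrac{P_2}{H_n^2}<\tfrac{k-1}{\log n}\bigl(\tfrac1n+1-\tfrac2{\log n}\bigr)\le\tfrac{k-1}{\log n}$ since $\log n\le 2n$. Dividing by $n$ gives \eqref{lower-smallk}.

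For the second estimate note that $(k-1)!\,e_{k-1}(1,\tfrac12,\ldots,\tfrac1{n-1})=H_{n-1}^{k-1}q$, where $q$ is the probability that $m:=k-1$ i.i.d.\ samples $J_1,\ldots,J_m$ from the law $p_j=\PR(J=j)=\tfrac1{jH_{n-1}}$ on $\{1,\ldots,n-1\}$ are pairwise distinct. Since $H_{n-1}^{k-1}\ge(1-\tfrac{k-1}{nH_n})H_n^{k-1}\ge\tfrac12 H_n^{k-1}$ for $n\ge 2A$ and $k-1\le A\log n$, and $\tfrac1n\ge\er^{-H_n}$, it suffices to show $q\ge c'(A)>0$ on the range $1\le k\le A\log n$. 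If $k-1\le\tfrac12\log n$ this is the first moment: $q\ge 1-\binom{k-1}{2}\sum_j p_j^2\ge 1-\tfrac{(\log n)^2}{8}\cdot\tfrac{\pi^2/6}{(\log n)^2}>0$. For $\tfrac12\log n<k\le A\log n$ — the substantial range, where $m\to\infty$ — I would Poissonise: let $M$ be a Poisson variable with mean $2m$, independent of the $J_i$; the occupancies $N_j=\#\{i\le M:J_i=j\}$ are then independent, each Poisson with mean $2mp_j$, whence
\[
\PR\bigl(J_1,\ldots,J_M\text{ distinct}\bigr)=\prod_{j<n}\er^{-2mp_j}(1+2mp_j)\ \ge\ \exp\Bigl(-2m^2\!\sum_{j<n}p_j^2\Bigr)\ \ge\ \er^{-A^2\pi^2/3},
\]
using $\log(1+t)\ge t-\tfrac{t^2}2$ and $m^2\sum_j p_j^2\le(A\log n)^2\cdot\tfrac{\pi^2/6}{(\log n)^2}$. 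As $r\mapsto\PR(J_1,\ldots,J_r\text{ distinct})$ is non‑increasing, the left side is at most $q+\PR(M<m)$, and a Chernoff bound gives $\PR(M<m)\le\er^{-2mQ(1/2)}=(2/\er)^m\le(2/\er)^{\frac12\log n-1}$ (with $Q$ as in Theorem~\ref{single-set-tails}), which tends to $0$. Hence $q\ge\tfrac12\er^{-A^2\pi^2/3}$ once $n$ is large depending on $A$; combining with the reductions above proves the claim with $c(A)=\tfrac14\er^{-A^2\pi^2/3}$.

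The one genuinely delicate point is the lower bound for $q$ when $k\asymp A\log n$ with $A>1$: there the expected number of collisions among $J_1,\ldots,J_m$ is of order $A^2$, so the naive bound $q\ge1-\E[\#\text{collisions}]$ is vacuous. Poissonisation is what saves the day, because it makes the occupancies $N_j$ exactly independent, turning ``all distinct'' into a product over the values $j$ in which each small value (where collisions concentrate) contributes only the bounded factor $\er^{-2mp_j}(1+2mp_j)\ge\er^{-2(mp_j)^2}$, and $\sum_j(mp_j)^2$ is bounded; the subsequent de‑Poissonisation costs essentially nothing precisely because in this range the Poisson mean $2m$ is large.
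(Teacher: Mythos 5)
Your proof is correct, but it follows a genuinely different route from the paper. The paper proves both bounds from its cycle-removal identity (Lemma \ref{lem:CIjmj} specialized to the single set $[n]$), which carries the constraint $b_1+2b_2+\cdots\le n$; it drops that constraint via the minorant $\one(\cdot\le n)\ge (n-\cdot)/n$, applies the multinomial theorem after truncating to cycle lengths $\le m$, and simply chooses $m=n$ for \eqref{lower-smallk} and $m=n/(2A)$ for the second bound (using $H_{n/(2A)}=H_n+O(\log A)$). You instead start from the exact Stirling-number identity $\PR_n(C(\sigma)=k)=\frac1n e_{k-1}(1,\tfrac12,\ldots,\tfrac1{n-1})$ (correctly read off from Theorem \ref{thm:genfcn} with $I=\NN$), which eliminates the length constraint entirely; the first bound then comes from a union bound over collisions among ordered $(k-1)$-tuples, and the second from interpreting $(k-1)!\,e_{k-1}/H_{n-1}^{k-1}$ as a birthday-problem probability $q$ and lower-bounding $q$ by Poissonizing the sample size (independent occupancies, $\er^{-t}(1+t)\ge \er^{-t^2/2}$, then de-Poissonizing with the Poisson lower tail from Lemma \ref{Poisson_tails}). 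All the estimates check out, including the delicate range $k\asymp A\log n$ where the naive first-moment bound on $q$ fails; your case split covers the whole range, and the only blemish is the throwaway remark that for $k=2$ ``both error terms vanish'' (the term $\frac{k-1}{nH_n}$ does not, but it is trivially $\le\frac{k-1}{\log n}$). What each approach buys: yours is self-contained at the level of the exact identity and yields an explicit constant, e.g. $c(A)=\frac14\er^{-\pi^2A^2/3}$, but the Poissonization step is special to the statistic $C(\sigma)$; the paper's argument for the second bound is essentially one parameter choice and reuses machinery (Lemma \ref{lem:CIjmj}, multinomial theorem) that extends to counts $C_I(\sigma)$ for general sets $I$, which is how it is deployed elsewhere in the paper.
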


In particular, taking Corollary \ref{cycles-single-set} and \eqref{lower-smallk} together
establishes the asymptotic
\be\label{allsets-asym}
\PR(C(\sigma)=k) \sim \frac{H_n^{k-1}}{n(k-1)!} \qquad (k=o(\log n), n\to\infty),
\ee
recovering a result of Moser and Wyman \cite{MoserWyman}
(the authors utilized generating functions and contour integration).  We note that \eqref{allsets-asym}
differs from the prediction of the Poisson model by 
a factor \[
\frac{1}{n}\er^{H_n} \sim \er^{\gamma} \qquad (n\to\infty).
\]

Theorem \ref{thm:Goncharov}, Theorem \ref{cycles_sets}, Theorem \ref{single-set-tails}, Theorem \ref{cyclesinterval}, Theorem \ref{cyclesinterval_allm},
Theorem \ref{Djn} and Theorem \ref{allsets-local} will be proved in section
\ref{sec:local}.

\subsection{Conditioning on the total number of cycles}

If we restrict attention to permutations with $k$ total cycles,
we may obtain analogous theorems
about the distribution of $C_I(\sigma)$.
We focus on the ``normal'' case when $k=O(\log n)$
and prove an  analog of 
Theorem \ref{single-set-tails}.
We expect that $C_I(\sigma)$ will have roughly a binomial
distribution with parameter $p=H(I)/H_n$,
since if $X,Y$ are independent Poisson random variables with parameters
$\lambda_1,\lambda_2$, respectively, then 
\begin{align*}
\PR ( X = \ell | X+Y = k) = 
\binom{k}{l} \pfrac{\lambda_1}{\lambda_1+\lambda_2}^\ell
\pfrac{\lambda_2}{\lambda_1+\lambda_2}^{k-\ell}.
\end{align*}

Without loss of generality, we may assume that $H(I)\le \frac12 H_n$, else replace $I$ by $[n] \setminus I$.

\begin{thm}\label{conditional-tails}
Fix $A>1$.
Let $I$ be a nonempty, proper subset of $[n]$ with $H(I)\le \frac12 H_n$, suppose $
2\le k\le A\log n$, and define let $p=H(I)/H_n \le \frac12$.
For any $0\le \psi \le \sqrt{p(1-p)(k-1)}$ we have
\[
\PR \Big( |C_I(\sigma)-p(k-1) | \ge \psi\sqrt{p(1-p)(k-1)}\; \Big| \; C(\sigma)=k \Big) =O_A \(\er^{-\frac13 \psi^2}\),
\]
the implied constant depending only on $A$.
\end{thm}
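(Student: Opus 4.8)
The plan is to pass from conditional probabilities to unconditional ones by writing, for any event $\mathcal{E}$ depending on $\sigma$,
\[
\PR\big(\mathcal{E}\mid C(\sigma)=k\big) = \frac{\PR\big(\mathcal{E},\, C(\sigma)=k\big)}{\PR\big(C(\sigma)=k\big)},
\]
and then bounding the numerator from above and the denominator from below. For the denominator, Theorem \ref{allsets-local} gives, for $k\le A\log n$, the lower bound $\PR(C(\sigma)=k)\ge c(A)\,\frac{H_n^{k-1}}{(k-1)!}\er^{-H_n}$. Thus it suffices to show that the numerator, with $\mathcal{E}$ the event $|C_I(\sigma)-p(k-1)|\ge \psi\sqrt{p(1-p)(k-1)}$, is $O_A\big(\er^{-\psi^2/3}\,\frac{H_n^{k-1}}{(k-1)!}\er^{-H_n}\big)$.

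For the numerator I would write $\mathcal{E}$ as the union of a lower-tail event $\{C_I(\sigma)\le \lambda_1 p(k-1)\}$ and an upper-tail event $\{C_I(\sigma)\ge \lambda_2 p(k-1)\}$, where $\lambda_1 = 1 - \psi/\sqrt{p(1-p)(k-1)} \cdot \tfrac{1}{\,}$-type ratios are chosen so that the displacement matches $\psi$, and handle each separately. For the upper tail, decompose over the value $\ell = C_I(\sigma)$ and the value $k-\ell = C_{[n]\setminus I}(\sigma)$: since $I$ and $J:=[n]\setminus I$ are disjoint and cover $[n]$, the event $\{C_I=\ell,\ C(\sigma)=k\}$ is exactly $\{C_I(\sigma)=\ell,\ C_J(\sigma)=k-\ell\}$, and Theorem \ref{cycles_sets} applies with $r=2$, $\varepsilon=0$, giving
\[
\PR\big(C_I(\sigma)=\ell,\ C_J(\sigma)=k-\ell\big) \le \frac{\er^{H_n}}{n}\cdot \frac{H(I)^\ell}{\ell!}\er^{-H(I)}\cdot \frac{H(J)^{k-\ell}}{(k-\ell)!}\er^{-H(J)}\cdot\Big(\frac{\ell}{H(I)}+\frac{k-\ell}{H(J)}\Big).
\]
Since $H(I)+H(J)=H_n$, summing over $\ell$ and using the polynomial factor $\tfrac{\ell}{H(I)}+\tfrac{k-\ell}{H(J)} \ll_A k$ (as $k\le A\log n$ and $H(I),H(J)$ are comparable to $\log n$ once $n$ is large — here we use $p\le \tfrac12$ and $H(I)\le\tfrac12 H_n$), one gets
\[
\PR\big(C_I(\sigma)\ge \lambda_2 p(k-1),\ C(\sigma)=k\big) \;\ll_A\; \frac{\er^{H_n}}{n}\,\er^{-H_n}\cdot k \sum_{\ell \ge \lambda_2 p(k-1)} \binom{k}{\ell} p^\ell (1-p)^{k-\ell}\cdot \frac{H_n^{k}}{k!}\cdot\frac{1}{k}.
\]
After absorbing the $\er^{H_n}/n = \er^{\gamma}(1+o(1)) = O(1)$ factor and comparing $\frac{H_n^k}{k!}$ against the denominator's $\frac{H_n^{k-1}}{(k-1)!}$ (whose ratio is $H_n/k \ll_A 1$), the conditional upper-tail probability is bounded by a constant times a binomial tail $\PR(\mathrm{Bin}(k,p)\ge \lambda_2 p(k-1))$, shifted by one index. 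The lower tail $\{C_I(\sigma)\le \lambda_1 p(k-1)\}$ is handled identically, giving a binomial lower-tail probability. Finally I invoke the standard Chernoff bound for the binomial distribution — concretely the same $Q$-function estimate underlying Lemma \ref{Poisson_tails}/Theorem \ref{single-set-tails}, now in its binomial form — to conclude that both binomial tails are $O(\er^{-\psi^2/3})$ in the stated range $0\le\psi\le\sqrt{p(1-p)(k-1)}$, since $Q(\lambda)\,p(1-p)(k-1) \gg \psi^2$ there. Collecting the constants completes the bound.

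The main obstacle I anticipate is bookkeeping the ratio $H(I)/\log n$ and the factor $k/H_n$ cleanly: Theorem \ref{cycles_sets} produces $H(I)^\ell H(J)^{k-\ell}$ with the \emph{harmonic-sum} parameters, whereas the target binomial distribution wants $p^\ell(1-p)^{k-\ell}$ with $p=H(I)/H_n$, so one must multiply and divide by $H_n^k$, which is exactly what converts the leftover $H_n^k/k!$ into the denominator's $H_n^{k-1}/(k-1)!$ up to the harmless factor $H_n/k = O_A(1)$. The hypothesis $k\le A\log n$ is what makes that factor bounded, and the hypothesis $H(I)\le\tfrac12 H_n$ (with $n$ large) is what keeps $H(I)$ and $H(J)$ both of order $\log n$ so that the polynomial correction $\tfrac{\ell}{H(I)}+\tfrac{k-\ell}{H(J)}$ is $O_A(k)$ rather than something larger. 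A minor secondary point is the off-by-one in $p(k-1)$ versus $pk$: the binomial tail estimates are for $\mathrm{Bin}(k,p)$ centered at $pk$, and shifting the center by $p$ changes the deviation by at most $p\le\tfrac12$, which is absorbed into the allowed range of $\psi$ and the implied constant.
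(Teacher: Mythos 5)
Your overall plan coincides with the paper's: bound the unconditional probability $\PR\big(C_I(\sigma)=\ell,\ C_{[n]\setminus I}(\sigma)=k-\ell\big)$ via Theorem \ref{cycles_sets} with the two sets $I$ and $[n]\setminus I$ (so $\eps=0$), divide by the lower bound for $\PR(C(\sigma)=k)$ from Theorem \ref{allsets-local}, and finish with the binomial Chernoff bound of Lemma \ref{binomial_tails}. However, your bookkeeping has a genuine gap. First, the claim that $H_n/k\ll_A 1$ is backwards: the hypothesis $k\le A\log n$ gives $k/H_n\ll_A 1$, whereas $H_n/k$ can be as large as roughly $\tfrac12\log n$ (take $k=2$). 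Second, the assertion that $H(I)$ and $H(J)$ are both of order $\log n$ does not follow from the hypotheses: $H(I)\le \tfrac12 H_n$ is only an upper bound, and $H(I)$ may be bounded or even tend to $0$. Third, and decisively, if you discard the factor $\frac{\ell}{H(I)}+\frac{k-\ell}{H(J)}$ as $O_A(k)$ and handle $H_n^k/k!$ separately, then after dividing by $\PR(C(\sigma)=k)\gg_A H_n^{k-1}\er^{-H_n}/(k-1)!$ you are left with an uncancelled factor of order $H_n\asymp\log n$ multiplying the binomial tail. Since Lemma \ref{binomial_tails} produces the exponent $\psi^2/3$ exactly matching the target, this stray $\log n$ cannot be absorbed: for example with $\psi^2=\log\log n$ your bound is of size $(\log n)^{2/3}$, worse than the trivial bound $1$.

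The repair is not to bound the polynomial factor crudely but to combine it exactly with the binomial weights, which is what the paper does. Writing $\frac{H(I)^\ell H(J)^{k-\ell}}{\ell!\,(k-\ell)!}=\frac{H_n^k}{k!}\binom{k}{\ell}p^\ell(1-p)^{k-\ell}$ and noting $\frac{H_n}{k}\cdot\frac{\ell}{H(I)}=\frac{\ell}{pk}$ and $\frac{H_n}{k}\cdot\frac{k-\ell}{H(J)}=\frac{k-\ell}{(1-p)k}$, one has the exact identity
\begin{equation*}
\binom{k}{\ell}p^{\ell}(1-p)^{k-\ell}\Big(\frac{\ell}{pk}+\frac{k-\ell}{(1-p)k}\Big)
=\PR\big(\Bin(k-1,p)=\ell-1\big)+\PR\big(\Bin(k-1,p)=\ell\big),
\end{equation*}
so that for every $\ell$,
\begin{equation*}
\PR\big(C_I(\sigma)=\ell \,\big|\, C(\sigma)=k\big)\ \ll_A\ \PR\big(\Bin(k-1,p)=\ell-1\big)+\PR\big(\Bin(k-1,p)=\ell\big),
\end{equation*}
with no loss and with no lower bound on $H(I)$ needed. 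This also shows that the centering at $p(k-1)$ with $k-1$ trials is not an off-by-one artifact to be absorbed but the natural normalization produced by the identity. Summing these point probabilities over the two tails and applying Lemma \ref{binomial_tails} (as you intended) then yields the stated bound $O_A(\er^{-\psi^2/3})$.
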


Theorem \ref{conditional-tails} will be proved in section \ref{sec:conditioning}.
 We also mention here work of Mez\H o and Wang \cite{MezoWang},
 who found an asymptotic for the number of permutations 
 with exactly $k$ cycles and all cycles having length $>m$,
 for fixed $k$ and $m$ with $n\to \infty$.

%
\subsection{Permutations without small cycles.}
%

Sharp bounds on $\PR(C_{[m]}(\sigma)=0)$ are a
key to establishing the Poisson model.
The model predicts that $\PR(C_{[m]}(\sigma)=0)$
should be about $\er^{-H_m}$,
and Corollary \ref{cycles-single-set} contains an upper bound
close to this.
 This cannot be expected to hold for large $m$, for example 
 $\PR(C_{[m]}(\sigma)=0)=1/n$ if $m \ge n/2$ since a permutation lacking cycles of length at most $m$ must be a single $n$-cycle.
  In fact, when $n/m$ is small, there is an asymptotic formula
 $\PR(C_{[m]}(\sigma)=0) \sim \omega(n/m)/m$ ($n\to\infty$, $m\to\infty$) where $\omega$ is Buchstab's function
 and $\omega(u)\to \er^{-\gamma}$ as $u\to\infty$ \cite[Theorem 5]{granville}.  This is analogous to the problem
 of counting integers $n\le x$ with no prime factor $\le x^{1/u}$
 (see \cite[Ch. III.6]{Tenbook}).

Our focus  is to prove that
$\PR(C_{[m]}(\sigma)=0)$ is very close to $\er^{-H_m}$ when 
$n/m$ is large.

 \begin{thm}\label{nosmallcycles2}
 Let $1\le m\le n$.  Then
 \[
 \PR(C_{[m]}(\sigma)=0) = \er^{-H_{m}} \(1 + O(\er^{-g(n/m)}) \),
 \]
 where $g(x)=0$ for $1\le x\le 20$ and for $x > 20$,
 \[
g(x) = x\log x - x\log\log\log x + O(x).
 \]
 \end{thm}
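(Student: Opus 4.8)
The plan is to compute $\PR(C_{[m]}(\sigma)=0)$ via inclusion–exclusion on the small cycles, organised through the binomial moments of Theorem~\ref{cycles_sets_expectation}. Writing $I=[m]$, Bonferroni's inequalities applied to the event $\{C_I(\sigma)=0\}$ give, for every $r\ge 0$,
\[
\sum_{h=0}^{2r+1}(-1)^h \E\binom{C_I(\sigma)}{h}\le \PR(C_I(\sigma)=0)\le \sum_{h=0}^{2r}(-1)^h \E\binom{C_I(\sigma)}{h}.
\]
By Theorem~\ref{cycles_sets_expectation}, $\E\binom{C_I(\sigma)}{h}=H_m^h/h!$ exactly as long as $hm\le n$, i.e. as long as $h\le n/m$, and in all cases $\E\binom{C_I(\sigma)}{h}\le H_m^h/h!$. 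So the partial sums above agree with the partial sums of $\er^{-H_m}=\sum_h (-1)^h H_m^h/h!$ up to the point where $h$ exceeds $n/m$; truncating the Bonferroni window at $r\approx \tfrac12\lfloor n/m\rfloor$ yields
\[
\PR(C_I(\sigma)=0)=\er^{-H_m}+O\!\Big(\sum_{h>n/m}\frac{H_m^h}{h!}\Big)+(\text{boundary correction}),
\]
and the tail $\sum_{h> n/m} H_m^h/h!$ is, for $n/m$ large compared to $H_m$, governed by its first term, of size roughly $(eH_m/(n/m))^{n/m}$. This is where the shape of $g$ comes from: with $u=n/m$ and $H_m\approx\log m$, one has $\log\big((eH_m/u)^u\big)=u\log H_m-u\log u+u+\cdots$, and since $\log u$ and $\log\log m$ are of the same rough order (as $u=n/m\le n$ forces $\log m$ and $\log u$ to be comparable up to the regime that matters), one packages this as $-g(n/m)=-\big(u\log u-u\log\log\log\cdots\big)$; the precise bookkeeping of $H_m$ versus $\log m$ versus $\log u$, and of the remaining lower-order terms, is exactly the $O(x)$ slack in the statement of $g$. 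For $n/m\le 20$ there is nothing to prove beyond a crude bound, which is why $g\equiv 0$ there.

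More carefully, I would not truncate Bonferroni at a fixed even/odd pair but instead use the standard refined inclusion–exclusion identity
\[
\PR(C_I(\sigma)=0)=\sum_{h=0}^{N}(-1)^h\E\binom{C_I(\sigma)}{h}+(-1)^{N+1}\E\!\left[\binom{C_I(\sigma)}{N+1}\,\middle|\,\text{remainder}\right],
\]
or more robustly: split $\sum_{h\ge 0}(-1)^h\E\binom{C_I(\sigma)}{h}$ — which equals $\PR(C_I(\sigma)=0)$ exactly, since $\sum_h (-1)^h\binom{k}{h}=\one(k=0)$ — into $h\le \lfloor n/m\rfloor$, where each term is exactly $(-1)^h H_m^h/h!$, and $h>\lfloor n/m\rfloor$, where each term is $O(H_m^h/h!)$ by the inequality in Theorem~\ref{cycles_sets_expectation}. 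Thus
\[
\PR(C_I(\sigma)=0)=\sum_{h=0}^{\lfloor n/m\rfloor}(-1)^h\frac{H_m^h}{h!}+O\!\Big(\sum_{h>\lfloor n/m\rfloor}\frac{H_m^h}{h!}\Big)=\er^{-H_m}+O\!\Big(\sum_{h>\lfloor n/m\rfloor}\frac{H_m^h}{h!}\Big),
\]
using $\er^{-H_m}=1/m\cdot(1+o(1))$ and the fact that the tail of $\er^{-H_m}$ beyond $h=\lfloor n/m\rfloor$ is itself absorbed in the error term. It remains to show $\sum_{h>u}H_m^h/h!=\er^{-H_m}\cdot O(\er^{-g(u)})$ with $u=n/m$, i.e. $\er^{H_m}\sum_{h>u}H_m^h/h!=O(\er^{-g(u)})$. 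Using $H_m\le \log n+1$ and $h\ge u\ge 20$, the ratio of consecutive terms $H_m/(h+1)$ is bounded below $1$ once $h$ is a bounded multiple of $H_m$ past $u$ (and one checks $u$ itself is at least a fixed multiple of $H_m$ in the worst case $m$ small, $n$ large only if $u$ is huge, where the bound is trivial); so the sum is dominated by its first term $H_m^{\lceil u\rceil}/\lceil u\rceil!$, and Stirling plus $H_m=\log m+\gamma+O(1/m)$, $m=n/u$, gives $\log$ of this first term equal to $-u\log u+u\log\log(n/u)+u+O(\log u)$. Writing $\log\log(n/u)$ in terms of $\log\log\log$-type quantities and collecting errors produces exactly $-g(u)+O(u)$.

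The main obstacle is the asymptotic analysis of the tail sum $\sum_{h>n/m}H_m^h/h!$ uniformly in the two parameters $m,n$: one must handle the full range $1\le m\le n$, in particular the transition near $n/m\asymp H_m$ (where the tail is not yet small and the geometric-decay argument is delicate) and the regime where $m$ is small so that $H_m\asymp\log n$ can be comparable to $n/m$. In the transition zone the sum is not dominated by a single term and one needs a Gaussian/saddle-point estimate for the Poisson tail $\sum_{h>u}\er^{-\lambda}\lambda^h/h!$; but since the theorem only claims an error of the shape $\er^{-g(n/m)}$ with $g\equiv 0$ for $n/m\le 20$ and an $O(x)$ slack for $x>20$, it suffices to use the crude bound $\sum_{h>u}\lambda^h/h!\le \er^{\lambda}\PR(\mathrm{Pois}(\lambda)>u)$ together with Chernoff's inequality for the Poisson distribution (equivalently Theorem~\ref{single-set-tails} applied to $I=[m]$), which already yields the stated $g$. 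I expect the write-up to spend most of its effort reconciling the $H_m$ appearing naturally from the binomial moments with the $\log\log$ and $\log\log\log$ terms in $g$, all of which is elementary calculus of logarithms once the Chernoff bound is in hand.
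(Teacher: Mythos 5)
Your reduction to the binomial moments is fine as far as it goes (the identity $\one(a=0)=\sum_{h}(-1)^h\binom{a}{h}$ plus exactness of $\E\binom{C_{[m]}(\sigma)}{h}=H_m^h/h!$ for $h\le n/m$, and the upper bound otherwise, are all correct), but the quantitative conclusion does not follow, and this is a genuine gap rather than bookkeeping. Truncating the global inclusion--exclusion at depth $u=n/m$ leaves an error $\sum_{h>u}H_m^h/h!$, whose logarithm is $-u\log u+u\log H_m+O(u)$ with $H_m\approx\log m$; moreover, since you estimate an \emph{absolute} error and then divide by $\er^{-H_m}$, you pay an additional factor $\er^{H_m}=m^{1+o(1)}$. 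So your method yields a relative error of size $\exp\(\log m-u\log u+u\log\log m+O(u)\)$, whereas the theorem asserts $\exp\(-u\log u+u\log\log\log u+O(u)\)$. Your claim that ``$\log u$ and $\log\log m$ are of the same rough order'' so that $u\log\log m$ can be absorbed into $g$ is false: for $m=\sqrt n$ (so $u=\sqrt n$) the discrepancy is $u(\log\log n-\log\log\log n)$, far beyond the $O(u)$ slack; and in the range $20<u\le 2H_m$ your tail bound (even via Chernoff) is not even less than $1$, while the theorem still claims a saving of $\exp(-(1+o(1))u\log u)$ there. Truncating earlier only worsens the tail, and evaluating the moments for $h>n/m$ is essentially the original problem, so the approach cannot be patched by adjusting the cutoff.

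The missing idea, and the one the paper uses, is the Brun--Hooley sieve: partition $[m]$ into blocks $I_j=[m/D^j,m/D^{j-1})$ with $D=\log u$, so that each $H(I_j)=\log D+O(1)$, and perform a truncated inclusion--exclusion \emph{within each block} at even depths $k_j$ chosen so that $\sum_j(k_j+1)\max I_j\le n$; this constraint keeps every joint binomial moment that appears exactly equal to its Poisson value via Theorem \ref{cycles_sets_expectation}. The main term then factors as $\prod_j\sum_{r\le k_j}(-H(I_j))^r/r!$, which equals $\er^{-H_m}$ up to a multiplicative error governed by $(\log D+O(1))^{k_j+1}/(k_j+1)!$, i.e.\ with base $\log D=\log\log u$ in place of your $H_m$; taking $k_1\approx u$ gives $\exp(-u\log u+u\log\log\log u+O(u))$, multiplicatively against $\er^{-H_m}$, which is exactly the stated $g$. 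This block decomposition is precisely what produces the triple logarithm (and the uniformity down to $u>20$) that a single global truncation cannot reach.
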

 
 Theorem \ref{nosmallcycles2} will be proved in section \ref{sec:nosmall}.
 
 Historically, the relation $\lim_{n\to \infty} \PR(C_{[m]}(\sigma)=0) \to \er^{-H_m}$, for $m$ fixed, is due to Gruder \cite{Gruder}.
 Exact asymptotics for $\PR(C_{[m]}(\sigma)=0)-\er^{-H_m}$
 have been obtained by Petuchovas
 \cite{Pet16, Pet18}, using generating functions \eqref{genfcn}
 and a lengthy argument based on contour integration.
 Our method is much simpler and is based on sieve methods in
 number theory.

 \subsection{Permutations without large cycles}
 
 The distribution of permutations without
 large cycles is very different from that predicted
 by the Poisson model.  If $\sigma$ has no large cycles,
 the fact that the
 cycle lengths must sum to $n$ implies that
 $\sigma$ must contain a very large number of smaller cycles, and this is a much rarer event.  We define
 \[
 \nu(n,m) = \PR(C_{\{m+1,\ldots,n\}}(\sigma)=0).
 \]
 
\begin{thm}\label{nolargecycles}
For $1\le m\le n$ we have
\[
\nu(n,m) \le \er^{-u\log u + u -1}, \qquad u=n/m.
\]
\end{thm}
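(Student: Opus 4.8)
The plan is to bypass the generating function $\sum_n\nu(n,m)x^n=\exp\!\big(\sum_{j\le m}x^j/j\big)$ (the $y=1$ case of Theorem~\ref{thm:genfcn}) and instead work with the recursion it encodes, then close an induction on $n$. First I would record the identity
\[
\nu(n,m)=\frac1n\sum_{j=1}^{m}\nu(n-j,m)\qquad(n\ge m),\qquad \nu(k,m)=1\ \ (0\le k\le m),
\]
which follows by conditioning on the cycle of $\sigma$ containing the point $1$: that cycle has length $j$ with probability $1/n$ for each $j\in\{1,\dots,n\}$, and, given its length, the remaining points carry a uniform random permutation of $\cS_{n-j}$; one needs this length to be $\le m$ and the remainder to have no cycle exceeding $m$. (Equivalently, this is logarithmic differentiation of the generating function above.)

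Next I would introduce the comparison function. Set $\phi(u)=u\log u-u+1$ and define $G(u)=\er^{-\phi(u)}$ for $u\ge1$ and $G(u)=1$ for $0\le u<1$. Since $\phi(1)=0$ the function $G$ is continuous, and since $\phi'(u)=\log u\ge0$ on $[1,\infty)$ it is (weakly) decreasing with $G(1)=1$. The assertion of the theorem is exactly $\nu(n,m)\le G(n/m)$, so I would prove this by strong induction on $n$ for fixed $m$. The base case $n=m$ is $\nu(m,m)=1=G(1)$. For $n>m$, each term $\nu(n-j,m)$ with $1\le j\le m$ obeys $\nu(n-j,m)\le G((n-j)/m)$ — by the induction hypothesis when $n-j\ge m$, and trivially ($1=1$) when $n-j<m$ — so the recursion gives
\[
\nu(n,m)\le\frac1n\sum_{j=1}^{m}G\!\Big(\tfrac{n-j}{m}\Big).
\]

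It then remains to prove the purely analytic inequality $\sum_{j=1}^{m}G(u-j/m)\le u m\,G(u)$ for $u=n/m\ge1$. Since $u-j/m\ge u-1$ and $G$ is decreasing, the left-hand side is at most $m\,G(u-1)$, so it suffices to establish the key estimate $G(u-1)\le u\,G(u)$ for all $u\ge1$. For $u\ge2$ this says $\phi(u)-\phi(u-1)=\int_{u-1}^{u}\log t\,dt\le\log u$, which holds because $\log t\le\log u$ throughout $[u-1,u]\subseteq[1,\infty)$; for $1\le u<2$ it says $1\le u\,\er^{-\phi(u)}$, i.e. $\phi(u)\le\log u$, i.e. $(u-1)(\log u-1)\le0$, which is clear on $[1,2)$. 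Combining, $\nu(n,m)\le G(n/m)=\er^{-u\log u+u-1}$.

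The argument has no deep obstacle once one commits to the recursion; the real pitfall is methodological. The naive bound $[x^n]f\le f(r)/r^n$ applied to $f(x)=\exp(\sum_{j\le m}x^j/j)$ is worthless in the range of interest: for $n\asymp m$ it overshoots the truth by a factor polynomial in $m$ (for $n=2m$ it exceeds $1$ while $\nu(2m,m)\to1-\log2$), because near $x=1$ the function behaves like $(1-x)^{-1}$ up to scale $1-O(1/m)$. Thus one cannot afford a crude saddle-point/contour estimate here and must exploit the exact recursion instead. The only technical care needed is the discretization in $\sum_{j\le m}G(u-j/m)$, and that is neatly sidestepped by bounding every summand by the largest one, $G(u-1)$, which reduces the whole matter to the one-line inequality $G(u-1)\le uG(u)$ — the permutation analogue of de Bruijn's functional bound for Dickman's function, of which $\er^{-u\log u+u-1}$ is the classical upper estimate.
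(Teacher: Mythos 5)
Your proof is correct, and it takes a genuinely different route from the paper. The paper proves the bound by a tilting (Chernoff-type) argument: for $w\ge 1$ it writes $\nu(n,m)\le \E w^{C_1(\sigma)+2C_2(\sigma)+\cdots+mC_m(\sigma)-n}$, expands each $w^j=1+(w^j-1)$ and applies the binomial-moment bound (Lemma \ref{cycles}) to get $\nu(n,m)\le w^{-n}\exp\{\sum_{j\le m}(w^j-1)/j\}$, then chooses $w=u^{1/m}$ and uses $u^{j/m}\le 1+(u-1)j/m$ to bound the exponent by $u-1$. You instead use the cycle-removal recursion $\nu(n,m)=\frac1n\sum_{j=1}^m\nu(n-j,m)$ — which is exactly the identity the paper itself derives for the Dickman comparison in Theorem \ref{nolargecycles-dickman} — and close an induction by checking that $G(u)=\er^{-u\log u+u-1}$ (extended by $1$ on $[0,1)$) is a supersolution, via the one-line inequality $G(u-1)\le uG(u)$; all the steps (the recursion, the monotonicity of $G$, and the two cases $u\ge 2$ and $1\le u<2$ of the key inequality) check out. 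What each buys: the paper's tilting argument is an instance of a reusable technique (the same device reappears in the proof of Theorem \ref{Poisson_smallcycles}), while your induction is self-contained, avoids the binomial-moment machinery, and runs parallel to the paper's proof of the sharper two-sided Dickman bounds, of which your $G$ is the classical upper envelope. One remark on your closing paragraph: the paper's bound is not the naive coefficient estimate $[x^n]f\le f(w)w^{-n}$ you warn against — because each $w^j$ is replaced by $w^j-1$, the tilted expectation keeps an extra factor $\er^{-H_m}\asymp 1/m$ relative to that crude bound (it is a Chernoff bound in the Poisson model), which is precisely what makes it land on $\er^{-u\log u+u-1}$; so the generating-function route is not as hopeless as your aside suggests, though your recursion avoids the issue entirely.
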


 This bound is reasonably sharp throughout the
range $1\le m\le n$.  For example, when $m=1$, Stirling's formula implies
\[
\nu(1,m) =\frac{1}{n!}
\sim \frac{\er^{-n\log n + n}}{\sqrt{2\pi n}} \qquad
(n\to \infty).
\]
When $m=2$, Chowla, Herstein and Moore \cite{CHM} showed
an asymptotic for $\nu(2,m)$ which implies that
\[
\nu(2,m) = \er^{-(n/2)\log (n/2)+O(n)}.
\]
At the opposite extreme, when $n/m=u$ is bounded, then $\nu(n,m) \sim \rho(u)$ as $n\to\infty$ by Goncharov \cite{Gon44}, where $\rho$ is
 the Dickman function \cite{dickman}, the unique continuous solution of the 
 differential-delay equation
 \be\label{rho-recur}
\rho(u) = 1 \;\;\; (0\le u\le 1); \qquad u\rho'(u)=-\rho(u-1) \quad (u>1).
\ee
de Bruijn \cite{debruijn} found a precise asymptotic for $\rho(u)$ as $u\to \infty$.
In particular 
\[
\rho(u)=\er^{-u\log u-u\log\log (3u)+O(u)}.
\]
See also \cite{Tenbook}, Ch. III.5.4.

Using complex analytic methods starting from \eqref{genfcn}, Manstavi\v{c}ius and Petuchovas \cite{ManPet16} found
more precise asymptotics for $\nu(n,m)$ throughout the range $1\le m\le n$.
Their methods are motivated by the analogous problem of
counting integers lacking large prime factors,
see \cite[Ch. III.5]{Tenbook}.
Our next result, which has a very short proof, provides an asymptotic in  
large range of $n,m$.

\begin{thm}\label{nolargecycles-dickman}
For all $n\ge m\ge 1$ we have
\be\label{nu-bounds}
\rho\pfrac{n}{m} \le \nu(n,m) \le  \rho\pfrac{n+1}{m+1}.
\ee
\end{thm}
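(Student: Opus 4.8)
The plan is to play off the exact recursion satisfied by $\nu(n,m)$ against the integral form of Dickman's delay equation \eqref{rho-recur}, using only the monotonicity of $\rho$. Write $\psi(n,m)$ for the number of $\sigma\in\cS_n$ all of whose cycles have length at most $m$, so that $\nu(n,m)=\psi(n,m)/n!$ and $\nu(0,m)=1$. Conditioning on the length $j$ of the cycle through a fixed point, and using that such a cycle can be built in $\binom{n-1}{j-1}(j-1)!=(n-1)!/(n-j)!$ ways, one gets $\psi(n,m)=\sum_{j=1}^{\min(n,m)}\frac{(n-1)!}{(n-j)!}\psi(n-j,m)$, which after dividing by $n!$ becomes
\[
n\,\nu(n,m)=\sum_{j=1}^{\min(n,m)}\nu(n-j,m)\qquad(n\ge 1).
\]
Moreover $\nu(n,m)=1$ for $n\le m$. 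On the analytic side I will use the standard facts (see \cite[Ch.~III.5]{Tenbook}) that $\rho$ is continuous, positive and non-increasing, equals $1$ on $[0,1]$, and that \eqref{rho-recur} is equivalent to $u\rho(u)=\int_{u-1}^{u}\rho(t)\,dt$ for $u\ge 1$; after the substitution $t=s/q$ this says that for every real $q>0$ and every $u\ge 1$,
\[
q\,u\,\rho(u)=\int_{q(u-1)}^{qu}\rho(s/q)\,ds .
\]
I will apply this with $q=m$, $u=n/m$ for the lower bound, and with $q=m+1$, $u=(n+1)/(m+1)$ for the upper bound.

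For the lower bound I induct on $n$, establishing $\nu(n,m)\ge\rho(n/m)$ for all $n\ge 0$. When $n\le m$ both sides equal $1$. When $n>m$, the recursion and the inductive hypothesis give $n\,\nu(n,m)\ge\sum_{j=1}^{m}\rho((n-j)/m)=\sum_{k=n-m}^{n-1}\rho(k/m)$; since $\rho$ is non-increasing, $\rho(k/m)\ge\int_{k}^{k+1}\rho(s/m)\,ds$, so the sum is at least $\int_{n-m}^{n}\rho(s/m)\,ds$, which by the displayed identity with $q=m$, $u=n/m>1$ equals $n\,\rho(n/m)$. Dividing by $n$ closes the induction.

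For the upper bound I again induct on $n$, establishing $\nu(n,m)\le\rho((n+1)/(m+1))$ for all $n\ge 0$. When $n\le m$ the left side is $1$ and so is $\rho((n+1)/(m+1))$ since $(n+1)/(m+1)\le 1$. When $n>m$, the recursion and the inductive hypothesis give $n\,\nu(n,m)\le\sum_{j=1}^{m}\rho\big((n-j+1)/(m+1)\big)=\sum_{k=n-m+1}^{n}\rho\big(k/(m+1)\big)$. Setting $u=(n+1)/(m+1)>1$, the displayed identity with $q=m+1$ gives $(n+1)\rho(u)=\int_{n-m}^{n+1}\rho(s/(m+1))\,ds$; splitting this integral over the unit intervals $[\ell,\ell+1]$ and bounding each piece below by its right endpoint value $\rho((\ell+1)/(m+1))$ yields $(n+1)\rho(u)\ge\sum_{k=n-m+1}^{n+1}\rho(k/(m+1))$, hence $n\,\rho(u)=(n+1)\rho(u)-\rho(u)\ge\sum_{k=n-m+1}^{n}\rho(k/(m+1))\ge n\,\nu(n,m)$. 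Dividing by $n$ finishes the proof.

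There is no genuine analytic difficulty: monotonicity of $\rho$ supplies all the one-sided Riemann-sum comparisons, in the correct direction in each case. The only thing that needs care is the bookkeeping of the summation and integration ranges. In particular the natural scalings in the two bounds differ --- $m$ versus $m+1$ --- and it is precisely this, together with the loss of the single term $\rho(u)$ when passing from $(n+1)\rho(u)$ to $n\rho(u)$, that accounts for the shift from $n/m$ to $(n+1)/(m+1)$ in the upper bound; one should also check, as above, that the inductive hypothesis is only ever invoked at indices in $\{0,1,\dots,n-1\}$.
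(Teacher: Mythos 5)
Your proof is correct and follows essentially the same route as the paper: the same cycle-extraction recursion $n\,\nu(n,m)=\sum_{j}\nu(n-j,m)$, the same integral identity $u\rho(u)=\int_{u-1}^{u}\rho(t)\,dt$, and the same monotonicity-based Riemann-sum comparisons, with induction on $n$. The only cosmetic differences are your simpler base case $n\le m$ (the paper instead starts the induction from $m\le n\le 2m$ using the exact formula $\nu(n,m)=1-(H_n-H_m)$) and the way you account for the lost term $\rho\!\left(\frac{n+1}{m+1}\right)$ in the upper bound, which matches the paper's bound on the leftover integral.
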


Theorems \ref{nolargecycles} and \ref{nolargecycles-dickman} will be proved in
section \ref{sec:nolarge}.

We have
\be\label{rho-local}
\rho(u-v) = \rho(u) \er^{O(v\log u)} \qquad (u\ge 2, 0\le v\le 1).
\ee
This follows from strong asymptotics for $\rho(u)$, e.g. \cite[Theorem III.5.13]{Tenbook}.  We give a short, direct deduction of \eqref{rho-local} in the Appendix.
Since $\frac{n}{m}-\frac{n+1}{m+1} \le \frac{n}{m^2}$ we deduce the following.

\begin{cor}\label{nu-asym}
We have
\[
\nu(n,m) \sim \rho(n/m)\qquad (m \le n = o(m^2/\log m), m\to \infty).
\]
\end{cor}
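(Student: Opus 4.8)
The plan is to derive Corollary \ref{nu-asym} as a quick consequence of Theorem \ref{nolargecycles-dickman} together with the local estimate \eqref{rho-local}. First I would record the sandwich \eqref{nu-bounds}, namely $\rho(n/m) \le \nu(n,m) \le \rho\big(\frac{n+1}{m+1}\big)$, so that it suffices to show the ratio of the two outer quantities tends to $1$ under the stated hypothesis $m \le n = o(m^2/\log m)$ as $m\to\infty$.

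Next I would compare the two arguments of $\rho$. Write $u = n/m$ and $v = \frac{n}{m} - \frac{n+1}{m+1}$; a short computation gives
\[
v = \frac{n}{m} - \frac{n+1}{m+1} = \frac{n(m+1) - m(n+1)}{m(m+1)} = \frac{n-m}{m(m+1)} \le \frac{n}{m^2}.
\]
Since $\rho$ is non-increasing, $\rho(n/m) \le \nu(n,m) \le \rho\big(\frac{n+1}{m+1}\big) = \rho(u - v)$, and the hypothesis $n = o(m^2/\log m)$ gives $v \le n/m^2 = o(1/\log m)$; note also $u = n/m \le n$ so $\log u \le \log n$, and combined with $n\ll m^2$ we get $\log u = O(\log m)$. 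Hence $v\log u = o(1)$ (in fact $v\log u \ll (n/m^2)\log m = o(1)$). We may also assume $u\ge 2$ for $m$ large if $n\ge 2m$; in the remaining case $m \le n < 2m$ the argument $u$ is bounded, $v\to 0$, and continuity of $\rho$ gives $\rho(u-v)/\rho(u)\to 1$ directly, so \eqref{rho-local} is not even needed there.

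Now apply \eqref{rho-local} with this $u$ and $v$: $\rho(u-v) = \rho(u)\er^{O(v\log u)} = \rho(u)(1+o(1))$. Combining with the sandwich,
\[
\rho(n/m) \le \nu(n,m) \le \rho(n/m)(1+o(1)),
\]
which is exactly the claimed asymptotic $\nu(n,m)\sim \rho(n/m)$ as $m\to\infty$ in the range $m\le n = o(m^2/\log m)$. There is no real obstacle here: the only mild point of care is confirming that $\log u$ is genuinely $O(\log m)$ (rather than, say, $O(\log n)$ with $n$ possibly much larger than any power of $m$), but this is automatic because the hypothesis already forces $n \ll m^2/\log m < m^2$. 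The heavy lifting — both the combinatorial sandwich \eqref{nu-bounds} and the analytic input \eqref{rho-local} — has been done earlier, so this corollary is a one-paragraph deduction.
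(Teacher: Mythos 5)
Your proposal is correct and follows exactly the paper's route: the paper deduces the corollary from the sandwich \eqref{nu-bounds} together with \eqref{rho-local}, using precisely the observation that $\frac{n}{m}-\frac{n+1}{m+1}\le \frac{n}{m^2}$, which is $o(1/\log m)$ under the stated hypothesis. Your extra care in checking the hypotheses of \eqref{rho-local} (that $u\ge 2$, with the bounded-$u$ case handled by continuity and positivity of $\rho$) only fills in details the paper leaves implicit.
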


Corollary \ref{nu-asym} recovers Theorem 4 of \cite{ManPet16}.
 When $n \gg m^2/\log m$, $\nu(n,m) \not\sim \rho(n/m)$, 
the asymptotic having a different shape; see \cite[Theorem 2.4]{Pet16}.
Thus, the range of $n$ in Theorem \ref{nu-asym} is best possible.

When $n/2 \le m\le n$, $\sigma$ has at most
one cycle of length $k\in (m,n]$, thus
\be\label{nu-smalln}
\nu(n,m) = 1 - 
\sum_{m<k\le n} \E C_k(\sigma) = 1-(H_n-H_m).
\ee
In particular, when $m=50, n=100$, this helps to solve the ``100 prisoners problem'' \cite{100prisoners}: There are 100 prisoners, numbered to 100.
 The numbers from 1 to 100 are placed in 100 unmarked boxes.
Each prisoner is allowed to open 50 of the boxes, and no communication between prisoners is allowed.  If every prisoner
finds his own number then they all go free.  Although it appears hopeless, there is a strategy that will work about 31\% of the time.
If the boxes are labeled 1,...,100 on the outside, the mapping from external label to internal number is a permutation of $[100]$.  With probability $1-H_{100}+H_{50} \approx 0.31$, the permutation contains no cycles of length more than 50. In this case, if 
 every prisoner follows the cycle starting with his own number (first opens the box labeled on the outside with his number, then opens the box number that he finds in the first box, etc), he'll find his number
 inside one of the boxes after no more than 50 openings.

The limiting relation $\lim_{n\to \infty} \nu(n,\fl{n/u}) = \rho(u)$
was first proved by Knuth and Trabb Pardo \cite{KTP}, 46 years after
Dickman \cite{dickman} showed the analogous statement for prime factors.
The joint distribution of the lengths of the $r$ largest cycles of $\sigma$, with $r\ge 1$ fixed,
has also received considerable attention (see, e.g., \cite{ABT, LS, vershik}), but 
we will not discuss it here.  We also mention the survey paper \cite[Section 3.10,3.11]{lagarias}, which has more extensive historical
information about work on the distribution of the smallest and largest cycles.

\subsection{Poisson approximation of small cycle lengths}

Let $1\le k\le n$ and
consider the problem of modeling 
\[
\cC_k=(C_1(\sigma),\ldots,C_k(\sigma))\]
by the random vector 
\[
\cZ_k=(Z_1,\ldots,Z_k), 
\]
where $Z_1,\ldots,Z_k$ are independent Poisson random variables
with parameters $1,\frac12,\ldots,\frac{1}{k}$, respectively.
We especially desire a good approximation when $k$ is large, as opposed
to bounded (ref. Theorem \ref{cycles_sets}).
We express our results in terms of the Total Variational Distance
$d_{TV}(X,Y)$ between two random variables $X$ and $Y$ taking values
in a discrete space $\Omega$,
defined by 
\be\label{dTV}
d_{TV}(X,Y) := \sup_{U\subset \Omega}  \PR(X\in U) - \PR(Y\in U).
\ee

\begin{thm}\label{Poisson_smallcycles}
Let $1\le k\le n$.  Then $$d_{TV} (\cC_k, \cZ_k) \le \er^{-f(n/k)},$$ where $f(x)=0$ for $x\le 20$ and  for $x\ge 20$ we have
\[
f(x)=x\log x - x \log\log\log x  + O(x).
\]
\end{thm}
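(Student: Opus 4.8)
The plan is to reduce $d_{TV}(\cC_k,\cZ_k)$ to the input Theorem~\ref{nosmallcycles2} via a sieve-type identity coming from Cauchy's formula. For a tuple $\mathbf m=(m_1,\dots,m_k)$ of non-negative integers write $s(\mathbf m)=\sum_{j=1}^k jm_j$, and for $\ell\ge 0$ let $P(\ell)=\PR_\ell\bigl(C_{[k]}(\sigma)=0\bigr)$, so that $P(0)=1$, $P(\ell)=0$ for $1\le\ell\le k$, and $P(\ell)=\er^{-H_k}\bigl(1+O(\er^{-g(\ell/k)})\bigr)$ for $\ell>k$ by Theorem~\ref{nosmallcycles2}. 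Fixing the cycles of length $\le k$ and applying Theorem~\ref{thm:Cauchy} to the remaining points gives, for every $\mathbf m$,
\[
\PR_n(\cC_k=\mathbf m)=\Bigl(\prod_{j=1}^k\frac{(1/j)^{m_j}}{m_j!}\Bigr)\,P\bigl(n-s(\mathbf m)\bigr),\qquad
\PR(\cZ_k=\mathbf m)=\Bigl(\prod_{j=1}^k\frac{(1/j)^{m_j}}{m_j!}\Bigr)\,\er^{-H_k},
\]
with the conventions $P(\ell)=0$ for $\ell<0$ and $\PR_n(\cC_k=\mathbf m)=0$ when $s(\mathbf m)>n$. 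Using the standard formula $d_{TV}(X,Y)=\sum_\omega\bigl(\PR(X=\omega)-\PR(Y=\omega)\bigr)_+$, grouping the tuples $\mathbf m$ by the common value $s=s(\mathbf m)$, and invoking Cauchy's formula a second time in the shape $\sum_{\mathbf m:\,s(\mathbf m)=s}\prod_{j=1}^k\frac{(1/j)^{m_j}}{m_j!}=\nu(s,k)$, one obtains the identity
\[
d_{TV}(\cC_k,\cZ_k)=\sum_{0\le s\le n}\bigl(P(n-s)-\er^{-H_k}\bigr)_+\,\nu(s,k).
\]

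Next I would estimate the summands. Since $P(\ell)=0$ for $1\le\ell\le k$, only $s=n$ and $s\le n-k-1$ contribute. The term $s=n$ contributes at most $\nu(n,k)\le\er^{-u\log u+u-1}$ with $u=n/k$ (Theorem~\ref{nolargecycles}), and the elementary inequality $\log u+1/u\ge 1$ shows this is $\le\er^{-f(n/k)}$ for all $n,k$ (and trivially $\le 1$, which suffices once $n/k\le 20$). For $s\le n-k-1$ we have $n-s>k$, so Theorem~\ref{nosmallcycles2} gives $\bigl(P(n-s)-\er^{-H_k}\bigr)_+\le\bigl|P(n-s)-\er^{-H_k}\bigr|\ll\er^{-H_k}\er^{-g((n-s)/k)}$, while Theorem~\ref{nolargecycles} gives $\nu(s,k)\le\er^{-Q(s/k)}$ with $Q(v)=v\log v-v+1$. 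Since $\er^{-H_k}\asymp 1/k$, we are left with
\[
d_{TV}(\cC_k,\cZ_k)\ll\er^{-f(n/k)}+\frac1k\sum_{0\le s\le n-k-1}\er^{-g((n-s)/k)-Q(s/k)}.
\]

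The crux is the uniform bound $g(u-v)+Q(v)\ge g(u)-O(u)$ for $0\le v\le u-1$, with $u=n/k$ and $v=s/k$. Writing $g$ in its stated two-sided form and using that $x\mapsto\log\log\log x$ is increasing and non-negative on $[20,\infty)$, one gets $g(u)-g(u-v)\le\bigl(u\log u-(u-v)\log(u-v)\bigr)+O(u)\le v\log u+O(u)$; combining with $Q(v)=v\log v-v+1$ yields $g(u-v)+Q(v)\ge g(u)-v\log(u/v)-O(u)\ge g(u)-O(u)$, the last step using $\max_{0<v\le u}v\log(u/v)=u/\er$ (when $u-v\le 20$, so $g(u-v)=0$, the bound is even easier, as then $v=u-O(1)$ and $Q(v)\ge u\log u-O(u)\ge g(u)-O(u)$). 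Feeding this back, each of the at most $n+1$ terms is $\ll\er^{-g(u)+O(u)}$, so
\[
d_{TV}(\cC_k,\cZ_k)\ll\er^{-f(n/k)}+\frac{n}{k}\,\er^{-g(n/k)+O(n/k)}\ll\er^{-g(n/k)+O(n/k)},
\]
since $\frac{n}{k}\le\er^{n/k}$ and $g,f$ share the profile $x\log x-x\log\log\log x$. Absorbing the $O(n/k)$ into the $O(x)$ term permitted in the statement gives $d_{TV}(\cC_k,\cZ_k)\le\er^{-f(n/k)}$ with $f$ of the asserted form; the range $n/k\le 20$ is handled trivially, as there $f\equiv 0$ and $d_{TV}\le 1$.

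I expect the main obstacle to be the uniform inequality $g(u-v)+Q(v)\ge g(u)-O(u)$: it records that splitting $n$ as $(n-s)+s$, and paying the ``no small cycles'' cost $g((n-s)/k)$ on the first part together with the ``no large cycles'' cost $Q(s/k)$ on the second, is never cheaper in the exponent — up to an additive $O(n/k)$ — than putting everything into the first part (the minimizing $s$ sits near $n/\log\log(n/k)$, not in the middle, so a naive attempt to balance the two costs equally fails). The remaining ingredients — the two uses of Cauchy's formula, the crude count of $n+1$ terms (harmless because it is dwarfed by $\er^{-H_k}\asymp 1/k$), and the elementary estimates for $Q$, $g$, and $H_k$ — are routine.
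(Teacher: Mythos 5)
Your argument is correct, and its first half coincides with the paper's: the identity $\PR_n(\cC_k=\mathbf m)=\bigl(\prod_j (1/j)^{m_j}/m_j!\bigr)P(n-s(\mathbf m))$ obtained from Cauchy's formula (Theorem \ref{thm:Cauchy}) is exactly the paper's Lemma \ref{dTVCkZk}, and both proofs then feed in Theorem \ref{nosmallcycles2} for the tuples with $n-s$ large. Where you genuinely diverge is in the treatment of the tuples with $s$ near $n$. The paper re-runs the tilting (Chernoff) computation from the proof of Theorem \ref{nolargecycles}: it groups tuples into ranges $n'<rk$, bounds the cumulative Poisson weight of each range with a parameter $w=(u-r+2)^{1/k}$, and finishes with a binomial-type sum $\sum_r 1/(r!(u+2-r)!)$. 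You instead group by the exact value of $s$, observe (again via Cauchy) that the grouped weight is precisely $\nu(s,k)$, quote Theorem \ref{nolargecycles}, and reduce everything to the pointwise inequality $g(u-v)+Q(v)\ge g(u)-O(u)$, which you verify correctly (the key points being $\log\log\log$ monotone nonnegative on $[20,\infty)$, $(u-v)\log\frac{u}{u-v}\le v$, and $\max_{0<v\le u}v\log(u/v)=u/\er$), with the crude count of $\le n+1$ terms tamed by the prefactor $\er^{-H_k}\asymp 1/k$. Your route is more modular — it reuses Theorem \ref{nolargecycles} instead of repeating its proof, at the cost of the elementary exponent inequality — while the paper's version avoids any such inequality by doing the Chernoff bound on the cumulative tail directly. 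One small caveat in your write-up: Theorem \ref{nolargecycles} is stated only for $s\ge k$, so for $0\le s<k$ you should use the trivial bound $\nu(s,k)=1\le \er^{1-Q(s/k)}$ (valid since $Q\le 1$ on $[0,1]$), or simply note that for such $s$ one has $v=s/k<1$ and $g((n-s)/k)\ge g(u-1)\ge g(u)-O(\log u)$; either fix is harmless and leaves your conclusion intact.
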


Theorem \ref{Poisson_smallcycles} will be proved in section \ref{sec:Kubilius}.

Theorem \ref{Poisson_smallcycles} is slightly weaker than
the main theorem of Arratia and Tavar\'e \cite{AT92},
which states that $d_{TV} (\cC_k, \cZ_k) \le \er^{-(n/k)\log(n/k)+O(n/k)}$.
Sharper bounds are known, and are expressed in terms of the 
Dickman and Buchstab functions (see \cite{ManPet16, Pet16}).
Our proof is significantly shorter than either of these treatments.

\medskip

We immediately obtain the following corollary, by grouping together integers into sets.

\begin{thm}\label{poisson_cycles}
Let $I_1,\ldots, I_m$ be disjoint subsets of $[k]$, with $k\le n$.  Then, for any 
set $\cJ \subseteq \NN_0^{m}$,
\[
\PR\Big( (C_{I_1}(\sigma),\ldots,C_{I_m}(\sigma)) \in \cJ \Big) = \PR\Big( (Y_1,\ldots,Y_m) \in \cJ\Big) + O(\er^{-f(n/k)}),
\]
where for each $i$,
$Y_i$ is Poisson with parameter $H(I_i)$, and $Y_1,\ldots,Y_m$
are independent.
\end{thm}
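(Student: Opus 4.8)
The plan is to deduce Theorem~\ref{poisson_cycles} directly from Theorem~\ref{Poisson_smallcycles} together with the additivity of independent Poisson random variables. First I would observe that the map
\[
\Phi\colon \NN_0^k \to \NN_0^m, \qquad (c_1,\ldots,c_k) \mapsto \Big( \ssum{j\in I_1} c_j, \ldots, \ssum{j\in I_m} c_j \Big)
\]
is a fixed (deterministic) function, and that $(C_{I_1}(\sigma),\ldots,C_{I_m}(\sigma)) = \Phi(\cC_k)$ since $C_{I_i}(\sigma) = \sum_{j\in I_i} C_j(\sigma)$ by definition of $C_{I_i}$. Likewise, if we set $Y_i = \sum_{j\in I_i} Z_j$, then $(Y_1,\ldots,Y_m) = \Phi(\cZ_k)$; because $Z_1,\ldots,Z_k$ are independent Poisson with parameters $1,\tfrac12,\ldots,\tfrac1k$ and the $I_i$ are pairwise disjoint, each $Y_i$ is Poisson with parameter $\sum_{j\in I_i} \tfrac1j = H(I_i)$, and $Y_1,\ldots,Y_m$ are independent (functions of disjoint blocks of an independent family). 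So the $Y_i$ are exactly the random variables named in the statement.

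Next I would invoke the elementary fact that total variation distance cannot increase under a deterministic map: for any function $\Phi$ and any random variables $X,Y$ on a common discrete space, $d_{TV}(\Phi(X),\Phi(Y)) \le d_{TV}(X,Y)$. Indeed, for any $\cJ \subseteq \NN_0^m$,
\[
\PR(\Phi(X)\in \cJ) - \PR(\Phi(Y)\in \cJ) = \PR(X\in \Phi^{-1}(\cJ)) - \PR(Y\in \Phi^{-1}(\cJ)) \le d_{TV}(X,Y),
\]
by taking $U = \Phi^{-1}(\cJ)$ in the definition \eqref{dTV}. Applying this with $X=\cC_k$, $Y=\cZ_k$, and the $\Phi$ above, and then using Theorem~\ref{Poisson_smallcycles}, gives for every $\cJ\subseteq \NN_0^m$
\[
\PR\big( (C_{I_1}(\sigma),\ldots,C_{I_m}(\sigma)) \in \cJ\big) - \PR\big( (Y_1,\ldots,Y_m)\in\cJ\big) \le d_{TV}(\cC_k,\cZ_k) \le \er^{-f(n/k)}.
\]
The same bound applied to the complement $\NN_0^m \setminus \cJ$ yields the matching lower bound $-\er^{-f(n/k)}$, so the difference is $O(\er^{-f(n/k)})$ as claimed.

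There is essentially no main obstacle here: the only things to be careful about are bookkeeping ones. One should note that $I_1,\ldots,I_m$ are subsets of $[k]$ (not of $[n]$), so that $\Phi$ genuinely factors through $\cC_k$ rather than needing the larger vector $(C_1(\sigma),\ldots,C_n(\sigma))$; this is why the hypothesis restricts the $I_i$ to $[k]$. One should also remark that the $I_i$ need not be intervals and need not cover $[k]$, which is fine since disjointness is all that is used for the independence and parameter-additivity of the $Y_i$. The error term $O(\er^{-f(n/k)})$ is inherited verbatim from Theorem~\ref{Poisson_smallcycles}, with $f(x)=0$ for $x\le 20$ and $f(x)=x\log x - x\log\log\log x + O(x)$ for $x\ge 20$.
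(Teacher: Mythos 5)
Your proposal is correct and is exactly the argument the paper has in mind: Theorem \ref{poisson_cycles} is stated as an immediate corollary of Theorem \ref{Poisson_smallcycles} "by grouping together integers into sets," which is precisely your pushforward/data-processing step combined with the additivity of independent Poisson variables over the disjoint blocks $I_i$. Your write-up simply makes explicit what the paper leaves implicit, so there is nothing further to add.
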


\subsection{Central Limit Theorems}
Combining Theorem \ref{poisson_cycles} with the Central
Limit Theorem for Poisson variables (Theorem \ref{Poisson_CLT} below) establishes a Central
Limit Theorem for the count of cycles whose lengths lie in an
arbitrary set $I\subset [n]$.

\begin{thm}\label{CLT_cycles}
Let $I\subset [n]$ with $H(I)\ge 3$.  Uniformly for all 
$I$ and any real $w$,
\[
\PR \( C_{I}(\sigma)\le H(I) + w \sqrt{H(I)} \) = \Phi(w) + 
O\pfrac{\log H(I)}{\sqrt{H(I)}},
\quad \;\; \Phi(w)=\frac{1}{\sqrt{2\pi}} \int_{-\infty}^w \er^{-\frac12 t^2}\, dt.
\]
\end{thm}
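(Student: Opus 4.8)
The plan is to deduce Theorem~\ref{CLT_cycles} from the Poisson approximation Theorem~\ref{poisson_cycles} together with the Central Limit Theorem for Poisson random variables (cited as Theorem~\ref{Poisson_CLT}). Fix $I\subset[n]$ with $H(I)\ge 3$. The one structural obstacle is that Theorem~\ref{poisson_cycles} only controls cycle lengths lying in $[k]$ for $k\le n$, with an error $O(\er^{-f(n/k)})$; but $I$ may contain large elements, so we must first split off the part of $I$ lying beyond a suitable threshold $k$ and argue that it contributes negligibly. The natural choice is $k=\fl{n/C}$ for a large absolute constant $C$ (say $C=40$, so $f(n/k)\ge f(40)$ is a fixed positive number and $\er^{-f(n/k)}$ is a small constant), and then to further note that we actually need $k$ large enough that the error beats $\log H(I)/\sqrt{H(I)}$. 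Since $H(I)\le H_n\le \log n+1$, taking $C$ a sufficiently large \emph{absolute} constant makes $\er^{-f(n/k)} = O(1/\sqrt{\log n}) = O(1/\sqrt{H(I)})$ whenever $H(I)$ is comparable to $\log n$; when $H(I)$ is much smaller than $\log n$ we instead choose $k$ depending on $I$ so that $H([k])\asymp H(I)$, which forces $n/k$ large and hence the total-variation error to be $O(\er^{-H(I)})$, comfortably smaller than the claimed error term.

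The main steps, in order, are as follows. First, set $I' = I\cap[k]$ and $I'' = I\setminus[k]$ where $k$ is chosen as above; by Theorem~\ref{cycles-set-strict} or Corollary~\ref{cycles-single-set}, $\E C_{I''}(\sigma) = H(I'')$ and $H(I'') = H(I) - H(I') \le H_n - H_k = O(1)$ for the constant-threshold choice (or $\le \tfrac12 H(I)$ for the $I$-dependent choice), and Markov's inequality gives $\PR(C_{I''}(\sigma)\ge 1) \le H(I'')$, which is either a small constant or $O(\er^{-H(I)})$-controllable after a dyadic refinement; more carefully, since we only need an additive $O(\log H(I)/\sqrt{H(I)})$ error, it suffices that $\PR(C_{I''}(\sigma)\neq 0)$ and $|H(I')-H(I)|/\sqrt{H(I)}$ are both $O(\log H(I)/\sqrt{H(I)})$, which pins down how large a threshold we need. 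Second, apply Theorem~\ref{poisson_cycles} with the single set $I'\subseteq[k]$ and $\cJ = \{j: j\le H(I')+w\sqrt{H(I')}\,\}$ to get $\PR(C_{I'}(\sigma)\le t) = \PR(Y\le t) + O(\er^{-f(n/k)})$ for all $t$, where $Y\sim\mathrm{Poisson}(H(I'))$. Third, invoke the Poisson CLT (Theorem~\ref{Poisson_CLT}) to replace $\PR(Y\le H(I') + w\sqrt{H(I')})$ by $\Phi(w) + O(1/\sqrt{H(I')})$, uniformly in $w$ --- this is the Berry--Esseen bound for a Poisson variable. Fourth, reconcile the normalizations: $C_I(\sigma)\le H(I)+w\sqrt{H(I)}$ differs from $C_{I'}(\sigma)\le H(I')+w'\sqrt{H(I')}$ only through the event $\{C_{I''}(\sigma)\neq 0\}$ and a change of variables $w\mapsto w'$ with $|w-w'| = O\big((|H(I)-H(I')| + |\sqrt{H(I)}-\sqrt{H(I')}|\,|w|)/\sqrt{H(I')}\big)$; using the local Lipschitz continuity of $\Phi$ (its density is bounded by $1/\sqrt{2\pi}$, and $|w|\er^{-w^2/2}$ is bounded) this shift changes $\Phi$ by $O(|H(I)-H(I')|/\sqrt{H(I)})$, absorbed into the error.

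The step I expect to be the real obstacle --- or at least the one requiring genuine care rather than bookkeeping --- is the choice of the threshold $k$ and the verification that the tail set $I''$ is negligible with the \emph{correct} uniform error. A constant threshold $k = \fl{n/C}$ is clean but only yields a constant-size total-variation error from Theorem~\ref{poisson_cycles}, which is too weak unless $H(I)\asymp\log n$; conversely an $I$-dependent threshold chosen so that $H([k])$ is a fixed multiple of $H(I)$ makes $n/k$ genuinely large (since $H([k])\sim\log k$ forces $k\le n^{O(1)/\log n}\cdot$something --- one must be careful here, as $H([k])\ge 3$ already gives $k\ge 10$, and $H([k]) = \tfrac12 H(I)$ with $H(I)$ possibly close to $\log n$ gives $k\approx\sqrt n$, hence $n/k\approx\sqrt n$ and $\er^{-f(\sqrt n)}$ is super-polynomially small), so the total-variation error is then completely negligible and the only surviving error is the Poisson Berry--Esseen term $O(1/\sqrt{H(I')}) = O(1/\sqrt{H(I)})$ plus the normalization-shift term $O(|H(I)-H(I')|/\sqrt{H(I)})$. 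To get the sharp error $O(\log H(I)/\sqrt{H(I)})$ rather than merely $O(1/\sqrt{H(I)})$ we can afford $I''$ with $H(I'')$ as large as $\sqrt{H(I)}\log H(I)$, so actually a threshold making $H([k]) = H(I) - O(\sqrt{H(I)})$ suffices, which is a very mild requirement on $k$ and keeps $n/k$ large; I would organize the proof around exactly this budget, handling the (easy) case $H(I)$ bounded away from $\log n$ and the boundary case $H(I)$ close to $\log n$ uniformly by always taking $k$ so that $H(I\setminus[k]) \le \sqrt{H(I)}$, then bounding $\PR(C_{I\setminus[k]}(\sigma)\neq 0)\le H(I\setminus[k])/\sqrt{H(I)}\cdot\sqrt{H(I)}$... in fact by Theorem~\ref{cycles-set-strict}, $\PR(C_{I\setminus[k]}(\sigma)\ge 1)\le H(I\setminus[k]) = O(\sqrt{H(I)})$, which is \emph{not} small --- so the correct move is instead to demand $H(I\setminus[k]) = O(\log H(I)/\sqrt{H(I)})$, i.e. $k$ must be taken large enough that only a tail of harmonic-sum mass $O(\log H(I)/\sqrt{H(I)})$ remains; since $H(\{k+1,\dots,n\})\approx\log(n/k)$, this means $\log(n/k)$ can still be as large as a constant times $H(I)$ minus $\log\sqrt{H(I)}$... the cleanest path, which I would ultimately adopt, is: choose $k$ minimal with $H([k])\ge H(I)$ when $H(I)<H_n$ (so $H([k]) = H(I)+O(1/k)$ and $I\setminus[k]=\emptyset$ after possibly enlarging $I$ within $[n]$ is not available) --- rather, choose $k$ minimal with $H([k])\ge H(I)$, giving $I\setminus[k]$ with $H(I\setminus[k])\le H(I)-H([k-1]) = O(1/k)$ only if $I\subseteq[k]$, which need not hold. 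I will resolve this in the writeup by the robust device of replacing $I$ with $I\cap[k]$ for $k=\fl n 2$ plus a separate direct treatment of $\E C_{I\setminus[n/2]}$ via \eqref{nu-smalln}, noting $H(\{n/2+1,\dots,n\}) = H_n - H_{n/2} = \log 2 + O(1/n)$ is a constant, then iterating/choosing the constant in the threshold large enough; the honest statement is that getting the $\log H(I)$ in the numerator exactly, uniformly over all $I$ including those with $H(I)$ within $O(1)$ of $\log n$, is the delicate point and is where I would spend the bulk of the argument.
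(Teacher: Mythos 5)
There is a genuine gap, and you have in fact put your finger on it yourself without resolving it: your entire strategy rests on making the large-cycle part $I''=I\setminus[k]$ negligible \emph{in probability}, i.e.\ on $\PR(C_{I''}(\sigma)\neq 0)$ being $O(\log H(I)/\sqrt{H(I)})$. But as you observe mid-argument, this forces $H(I'')=O(\log H(I)/\sqrt{H(I)})$, hence $\log(n/k)$ that small, hence $k$ within a factor $1+o(1)$ of $n$ --- at which point Theorem \ref{poisson_cycles} gives no saving at all (its error is $\er^{-f(n/k)}$ with $f=0$ for $n/k\le 20$). Conversely, any cutoff $k$ for which the Poisson approximation is useful leaves a tail with $H(I'')$ of size at least a constant (and as large as $\log\log H(I)+O(1)$ for the natural cutoff), so $\PR(C_{I''}\neq 0)$ is of constant order and cannot be discarded or absorbed into the error. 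Your closing fallback ($k=\fl{n/2}$, treat $I\setminus[n/2]$ via \eqref{nu-smalln}, ``iterate, choose the constant large enough'') does not escape this: at every scale the event that some cycle length falls in the tail has non-negligible probability, and no union bound or iteration of the same splitting converges to an $O(\log H(I)/\sqrt{H(I)})$ error. So the proposal is missing the one idea that makes the theorem work.

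The paper's resolution is not probabilistic but deterministic, and it is asymmetric between the two bounds. Take $m=\cl{n/\log H}$ with $H=H(I)$ and $J=I\cap[m]$, so $H(I\setminus J)\le \log\log H+O(1)$ and $H(J)=H+O(\log\log H)$. For the upper bound nothing is thrown away: $C_I\ge C_J$, so $\PR(C_I\le A)\le\PR(C_J\le A)$, and one applies Theorem \ref{poisson_cycles} and Lemma \ref{Poisson_CLT} to $C_J$. For the lower bound one uses that \emph{deterministically} $C_{I\setminus J}(\sigma)\le \log H$, because every element of $I\setminus J$ exceeds $n/\log H$ and the cycle lengths sum to at most $n$; hence
\[
\PR\(C_I(\sigma)\le A\)\;\ge\;\PR\(C_J(\sigma)\le A-\log H\),
\]
and the shift of the threshold by $\log H$ moves the CLT argument of $\Phi$ by $O(\log H/\sqrt{H})$ --- this is exactly where the $\log H(I)$ in the numerator of the error comes from. (The regime $|w|\ge\sqrt{3\log H}$ is handled separately by Theorem \ref{single-set-tails}, so that the bookkeeping $w\mapsto w',w''$ only needs to be done for bounded-size $w$.) Your steps two through four (Poisson approximation for $C_{I\cap[m]}$, Poisson Berry--Esseen, Lipschitz continuity of $\Phi$ under the renormalization) are sound and match the paper; what is missing is precisely the replacement of ``$I''$ is rarely hit'' by ``$I''$ can contribute at most $\log H$ cycles, so shift the threshold,'' without which the argument cannot be completed uniformly in $I$.
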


The special case $I=[n]$ was established by
Goncharov \cite{Gon44}, without a specific rate of convergence.
Goncharov analyzed carefully the asymptotics of the
Stirling number of the first kind, $s(n,m)$, the absolute value of which counts the
number of permutations $\sigma \in \cS_n$ with $C(\sigma)=m$.
Since $H_n=\log n + O(1)$ and $\Phi$ has bounded derivative,
we quickly arrive at the following.

\begin{thm}\label{CLT_all_cycles}
Let $n\ge 100$ and $w$ be real.  Then
\[
P\( C(\sigma) \le \log n + w\sqrt{\log n} \) = \Phi(w) + O\pfrac{\log\log n}{\sqrt{\log n}}.
\]
\end{thm}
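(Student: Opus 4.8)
The plan is to deduce Theorem~\ref{CLT_all_cycles} from Theorem~\ref{CLT_cycles} by specializing to $I=[n]$, so that $C_I(\sigma)=C(\sigma)$ and $H(I)=H_n$. The two statements are superficially close, but they are phrased with different normalizations ($H_n$ versus $\log n$; $\sqrt{H_n}$ versus $\sqrt{\log n}$), so the work is entirely in controlling the discrepancy between these normalizations and folding it into the error term. Since $n\ge 100$ we have $H_n\ge 3$, so Theorem~\ref{CLT_cycles} applies with $I=[n]$; this gives, for every real $v$,
\[
\PR\!\left(C(\sigma)\le H_n + v\sqrt{H_n}\right) = \Phi(v) + O\!\left(\frac{\log H_n}{\sqrt{H_n}}\right).
\]

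The key step is a change of variables. Given a real $w$, I want to write $\log n + w\sqrt{\log n} = H_n + v\sqrt{H_n}$ and solve for $v$; explicitly $v = \frac{\log n - H_n}{\sqrt{H_n}} + w\sqrt{\log n / H_n}$. Now $H_n = \log n + \gamma + O(1/n)$, so $\log n - H_n = -\gamma + O(1/n)$, whence the first summand is $O(1/\sqrt{\log n})$; and $\sqrt{\log n/H_n} = 1 + O(1/\log n)$, so $v = w + O\!\left(\frac{1+|w|}{\sqrt{\log n}}\right)$. Applying the displayed estimate with this $v$ and noting $\log H_n \asymp \log\log n$ turns the error term into $O(\log\log n/\sqrt{\log n})$, which is the claimed shape.

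What remains is to replace $\Phi(v)$ by $\Phi(w)$. Since $\Phi' = \frac{1}{\sqrt{2\pi}}\er^{-t^2/2}$ is bounded (by $1/\sqrt{2\pi}$), the mean value theorem gives $|\Phi(v)-\Phi(w)| \le \frac{1}{\sqrt{2\pi}}|v-w| = O\!\left(\frac{1+|w|}{\sqrt{\log n}}\right)$. This is fine when $|w|$ is bounded, but for large $|w|$ one needs the sharper bound $\Phi'(t)\le \er^{-t^2/2}$ to see that over the interval between $v$ and $w$ (both of which then have large absolute value, with the same sign once $\sqrt{\log n}$ is large enough) the increment of $\Phi$ is exponentially small: $|\Phi(v)-\Phi(w)| \le |v-w|\cdot \max_t \Phi'(t)$ where the max is taken over $t$ between $v$ and $w$, and this is $O(\er^{-w^2/3})$, comfortably absorbed into $O(\log\log n/\sqrt{\log n})$. (For $|w|$ up to a small multiple of $\sqrt{\log n}$ the crude bound already suffices; beyond that both sides of the target identity are within $O(\er^{-w^2/3})$ of $\Phi(w)\in\{0,1\}$, so there is nothing to prove.)

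The main obstacle is simply bookkeeping the uniformity in $w$: one must check that the substitution $v=v(w)$ stays in the range where Theorem~\ref{CLT_cycles} is quoted (which it does, as that theorem is stated uniformly for \emph{all} real arguments), and that the $|w|$-dependent part of the change-of-variables error is genuinely controlled for all $w$, not just bounded $w$. This is handled by the two-regime split just described. No generating functions or new probabilistic input are needed; the proof is a few lines of estimates on $H_n$ and $\Phi$.
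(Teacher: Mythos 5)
Your proposal is correct and is essentially the paper's own (very brief) argument: specialize Theorem~\ref{CLT_cycles} to $I=[n]$ and absorb the discrepancy via $H_n=\log n+O(1)$ and the bounded derivative of $\Phi$, with your two-regime treatment of large $|w|$ merely making explicit the uniformity the paper glosses over. One small slip: the parenthetical claim that the crude bound $|\Phi(v)-\Phi(w)|\ll (1+|w|)/\sqrt{\log n}$ suffices for $|w|$ up to a small multiple of $\sqrt{\log n}$ is not right (it only gives $O(\log\log n/\sqrt{\log n})$ for $|w|\ll\log\log n$), but this is harmless since your Gaussian-decay bound already covers all $|w|\gg\sqrt{\log\log n}$, so the two regimes overlap and the proof stands.
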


The big-$O$ term in Theorem \ref{CLT_cycles}
 cannot be made smaller than $1/\sqrt{H(I)}$
since $C_I(\sigma)$ is integer valued, and
thus the left side is constant in intervals of $w$ of length
$1/\sqrt{H(I)}$, while $\Phi'(w) \gg 1$ if $w$ is bounded.
We remark that  when $H(I)$ is bounded, $C_I(\sigma)$ is expected to
have Poisson distribution with small parameter, and this cannot be approximated by a Gaussian.

We also derive that the $j$-th smallest cycle
of $\sigma$, denoted $D_j(\sigma)$ (with ties allowed), also obeys the
Gaussian law, refining Theorem \ref{Djn}.

\begin{thm}\label{jth-smallest-cycle}
Uniformly for $j$ in the range
\[
1\le j \le \log n - \sqrt{(\log n)\log\log n}
\]
and for any real $w$,
\[
\PR \Big(\log D_{j}(\sigma) \le j + w\sqrt{j}\Big) = \Phi(w) 
+O\pfrac{\log (2j)}{\sqrt{j}}.
\]
\end{thm}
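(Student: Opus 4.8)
The plan is to reduce the statement about the $j$-th smallest cycle $D_j(\sigma)$ to the Central Limit Theorem for $C_I(\sigma)$ (Theorem \ref{CLT_cycles}) via the standard duality between order statistics and counting functions. The key observation is that $D_j(\sigma) \le m$ if and only if $\sigma$ has at least $j$ cycles of length $\le m$, i.e.
\[
\PR\big(D_j(\sigma) \le m\big) = \PR\big(C_{[m]}(\sigma) \ge j\big).
\]
So if I set $m = m(w) = \fl{\er^{j + w\sqrt{j}}}$, then $\PR(\log D_j(\sigma) \le j + w\sqrt{j}) = \PR(C_{[m]}(\sigma) \ge j)$, and I want to show this equals $\Phi(w) + O(\log(2j)/\sqrt{j})$. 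First I would dispose of trivial ranges: if $m \ge n$ the event is essentially certain and $w$ is large (so $\Phi(w)$ is near $1$ and one checks the error is dominated); if $m < 1$ the probability is $0$ and $w$ is very negative. In the main range, $H_m = \log m + \gamma + O(1/m) = j + w\sqrt{j} + O(1)$, so $H([m]) \asymp j$ (using $j \le \log n - \sqrt{(\log n)\log\log n}$ to guarantee $m \le n$ and $H_m \ge 3$ for $j$ large).

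Next I would apply Theorem \ref{CLT_cycles} with $I = [m]$. Writing $H = H_m$, that theorem gives, for any real $v$,
\[
\PR\big(C_{[m]}(\sigma) \le H + v\sqrt{H}\big) = \Phi(v) + O\!\left(\frac{\log H}{\sqrt{H}}\right).
\]
I want $\PR(C_{[m]}(\sigma) \ge j) = 1 - \PR(C_{[m]}(\sigma) \le j-1)$ (using integrality), so I set $j - 1 = H + v\sqrt{H}$, i.e. $v = (j-1-H)/\sqrt{H}$. Since $H = j + w\sqrt{j} + O(1)$ we get $v = -w\sqrt{j}/\sqrt{H} + O(1/\sqrt{H})$, and since $\sqrt{H} = \sqrt{j}\,(1 + O(w/\sqrt{j} + 1/j))^{1/2} = \sqrt{j} + O(w + 1/\sqrt j)$ one obtains $v = -w + O((1+|w|^2)/\sqrt{j})$ on the relevant range of $w$. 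Then $\PR(C_{[m]}(\sigma) \ge j) = 1 - \Phi(v) + O(\log H/\sqrt H) = \Phi(-v) + O(\log(2j)/\sqrt j)$, and finally $\Phi(-v) = \Phi(w) + O(|v+w|\sup|\Phi'|) = \Phi(w) + O((1+w^2)/\sqrt j)$. For bounded $w$ this is exactly the claimed error $O(\log(2j)/\sqrt j)$; for larger $w$ one uses the Gaussian tail $\Phi(w) \ge 1 - \er^{-w^2/2}$ together with $1 - \Phi(-v) \le \er^{-v^2/2}$ to absorb the polynomial-in-$w$ factor, since $\er^{-cw^2}w^2 = O(1)$. (The complementary far-left tail $w \to -\infty$ is handled symmetrically, or just by noting $\Phi(w)$ and the probability are both $O(\er^{-w^2/2})$ there.)

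The main obstacle I anticipate is bookkeeping the error terms uniformly in $w$ across the full real line while simultaneously tracking the error in passing from the scale $\sqrt{H_m}$ to the scale $\sqrt{j}$: the substitution $v \mapsto -w$ is only approximate, and the discrepancy $|v + w|$ grows with $|w|$, so one must use the decay of the Gaussian (and of the true probability, via Theorem \ref{single-set-tails} or Theorem \ref{cyclesinterval} applied to $C_{[m]}$) to keep the total error at $O(\log(2j)/\sqrt{j})$ rather than something worse. A secondary point requiring care is verifying, using the hypothesis $j \le \log n - \sqrt{(\log n)\log\log n}$, that $m(w)$ stays $\le n$ for all $w$ up to the point where the Gaussian tail already makes both sides negligibly close — i.e. that the ``saturation'' of $D_j$ at values $> n$ happens only in a regime where $\Phi(w)$ is within $O(\log(2j)/\sqrt j)$ of $1$. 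This is where the specific upper bound on $j$ is used and is essentially the same phenomenon that restricts the range in Theorem \ref{Djn}.
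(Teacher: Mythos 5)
Your proposal follows essentially the same route as the paper: the duality $\{\log D_j(\sigma)\le j+w\sqrt j\}=\{C_{[k]}(\sigma)\ge j\}$ with $k=\fl{\er^{j+w\sqrt j}}$, then Theorem \ref{CLT_cycles} applied to $I=[k]$, then the rescaling $\sqrt{H_k}=\sqrt j+O(|w|+1)$, with the hypothesis on $j$ used to keep $k\le n$. The one place where your plan is weaker than the paper's is the treatment of large $w$, specifically the ``saturation'' regime $k>n$, which already occurs for $w$ of size about $\sqrt{\log\log n}$ when $j$ is near the top of the allowed range: there you must show $\PR(C(\sigma)<j)\ll \log(2j)/\sqrt j\asymp \log\log n/\sqrt{\log n}$, but the tail bounds you cite (Theorem \ref{single-set-tails} or Theorem \ref{cyclesinterval}, with exponent $\tfrac13\psi^2$ and $\psi\asymp\sqrt{\log\log n}$) give only $O\big((\log n)^{-1/3}\big)$, which is too large, so that step as described does not close. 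The paper sidesteps this by truncating at the outset: both $\PR(\log D_j(\sigma)\le j+w\sqrt j)$ and $\Phi(w)$ are monotone in $w$, so it suffices to prove the estimate for $|w|\le\sqrt{\log j}$ (where the hypothesis on $j$ forces $k\le n$ automatically and the rescaling error $(1+w^2)/\sqrt j\ll\log(2j)/\sqrt j$ needs no Gaussian-decay absorption), and the endpoint values $w=\pm\sqrt{\log j}$, for which $\Phi(w)$ is within $O(j^{-1/2})$ of $1$ or $0$, then control all larger $|w|$. Alternatively you could repair your version by bounding $\PR(C(\sigma)\le j-1)$ via Theorem \ref{CLT_cycles} with $I=[n]$ rather than via the exponential tail bounds; either fix is short, but as written the saturation case has a genuine quantitative gap.
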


The analogous statement for the $j$-th smallest prime factor of
an integer, without a rate of convergence, was proved by Galambos \cite{Galambos76}.

Theorems \ref{CLT_cycles} and \ref{jth-smallest-cycle}
will be proved in section \ref{sec:CLT}.

\subsection{Fixed sets and divisors of permutations}

A \emph{fixed set}  of a permutation $\sigma\in \cS_n$ is a subset of $[n]$ fixed by $\sigma$.
 A fixed set corresponds to a product of some subset of the cycles in $\sigma$
(we include both the empty set and the whole set $[n]$ as fixed sets).  These play the
same role for permutations as divisors do for integers.  The existence of fixed sets
of a particular size has applications to various questions in combinatorial group theory,
such as generation of $\cS_n$ by random permutations and the distribution of
transitive subgroups of $\cS_n$.
See e.g. \cite{camkan, dfg08, dixon92, EFG1, EFG2, EFK, FGK, LP, PPR, weingartner}.

We begin with a simple result about $2^{C(\sigma)}$, which counts the
number of fixed sets of $\sigma$, equivalently, the number of divisors of $\sigma$.

\begin{thm}\label{exp-numdiv}
 $\E 2^{C(\sigma)} = n+1$.
\end{thm}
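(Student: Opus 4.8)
The plan is to compute $\E 2^{C(\sigma)}$ by expanding $2^{C(\sigma)} = \sum_{d \mid \sigma} 1$, the sum being over divisors $d$ of $\sigma$ (equivalently, over fixed sets of $\sigma$), and then to interchange expectation and summation. Concretely, writing $2^{C(\sigma)} = \sum_{\beta \mid \sigma} 1$, we have
\[
\E 2^{C(\sigma)} = \E \sum_{\beta\mid\sigma} 1 = \sum_{S\subseteq[n]} \PR\big(S\text{ is a fixed set of }\sigma\big),
\]
where the last sum ranges over all $2^n$ subsets $S$ of $[n]$. The probability that a fixed given set $S$ of size $j$ is invariant under $\sigma$ is $j!\,(n-j)!/n!=1/\binom{n}{j}$, since $\sigma$ must independently permute $S$ and $[n]\setminus S$. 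Summing over all $\binom{n}{j}$ subsets of each size $j$ gives $\sum_{j=0}^n \binom{n}{j}\cdot 1/\binom{n}{j} = n+1$.

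Alternatively, and perhaps more in the spirit of the binomial-moment toolkit developed earlier, one can derive this from Theorem~\ref{cycles_sets_expectation}: with $2^{C(\sigma)} = \sum_{m\ge 0}\binom{C(\sigma)}{m}$ (a finite sum since $C(\sigma)\le n$), linearity of expectation gives $\E 2^{C(\sigma)} = \sum_{m=0}^{n} \E\binom{C(\sigma)}{m}$. Taking $k=1$, $I_1=[n]$ in Theorem~\ref{cycles_sets_expectation}, we have $\E\binom{C(\sigma)}{m} = H_n^m/m!$ whenever $m\max([n]) = mn \le n$, i.e. $m\le 1$, and only an inequality otherwise; so this route requires the exact values of $\E\binom{C(\sigma)}{m}$ for all $m$, which are known to be $|s(n,m)| \binom{?}{}$-type quantities but are not simply $H_n^m/m!$. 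Hence I would not push this route to the end; the clean combinatorial double-counting above is strictly preferable.

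The only point requiring any care — and it is the ``main obstacle,'' such as it is — is the justification of the interchange of expectation and the sum over fixed sets, but since everything is a finite sum over the $2^n$ subsets of $[n]$ with nonnegative terms, no convergence issue arises and the interchange is immediate. The rest is the elementary count $\PR(\sigma(S)=S) = 1/\binom{n}{|S|}$ together with $\sum_{j=0}^n 1 = n+1$. I expect the whole proof to occupy two or three lines.
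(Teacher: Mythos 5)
Your argument is correct and is essentially identical to the paper's proof: both identify $2^{C(\sigma)}$ with the number of fixed sets (divisors) of $\sigma$, interchange the sum over $\sigma$ with the sum over subsets $S\subseteq[n]$, and use that exactly $|S|!\,(n-|S|)!$ permutations fix $S$, giving $\sum_{j=0}^n \binom{n}{j}\cdot j!(n-j)!/n! = n+1$. Your side remark correctly explains why the binomial-moment route is not the right tool here, so discarding it was the right call.
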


By contrast, we know that $C(\sigma) \sim \log n$ for most $\sigma\in
\cS_n$ (for example, from Theorem \ref{CLT_all_cycles}), 
and therefore for most $\sigma\in \cS_n$,
$2^{C(\sigma)} \approx 2^{\log n} = n^{\log 2}$, much smaller than $n$.

\bigskip

A basic problem is to estimate $i(n,k)$, the probability that $\sigma\in \cS_n$ fixes some set of size $k$.   Equivalently, what is the probability that the cycle decomposition of $\sigma$ contains disjoint cycles with lengths summing to $k$?  Evidently, $i(n,k)=i(n,n-k)$, thus it
suffices to bound $i(n,k)$ for $k\le n/2$.
Sharpening earlier bounds due to Diaconis, Fulman and Guralnick \cite{dfg08}, {\L}uczak and Pyber \cite{LP} and by Pemantle, Peres, and Rivin \cite[Theorem 1.7]{PPR}, the author with Eberhard and Green
\cite{EFG1} proved that
\be\label{ink}
 \frac{1}{k^{\cE}(1+\log k)^{3/2}} \ll
i(n,k) \ll \frac{1}{k^{\cE}(1+\log k)^{3/2}},
\quad \cE = 1- \frac{1+\log\log 2}{\log 2}=0.08607\ldots,
\ee
uniformly for $1\le k\le n/2.$  A full asymptotic is not known.

This is the permutation analog of counting integers with a 
divisor in a given interval, see e.g. \cite{F08,F08b},
and is related to the Erd\H os multiplication table problem
(\cite{Erdos55,Erdos60}),
that of estimating the 
number, $A(N)$, of \emph{distinct} products of the form $ab$ with $a\le N$, $b\le N$. 
The full proof of \eqref{ink} is rather complicated.
However, using the tools we have developed in this paper, we can quickly obtain an upper bound which is close to optimal.

\begin{thm}\label{thm:fixed-set}
Uniformly for $1\le k\le n/2$ we have
\[
i(n,k) \ll \frac{1}{k^{\cE}}.
\]
\end{thm}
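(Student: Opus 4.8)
The plan is to bound $i(n,k)$ by a union bound over possible sizes of the ``smallest piece'' of the fixed set and to exploit Theorem~\ref{cycles-set-strict}, which controls how many cycles can have lengths in a sparse set. First I would recall the basic dichotomy: if $\sigma$ fixes a set of size $k\le n/2$, then its cycle decomposition contains a sub-collection of cycles with lengths summing to exactly $k$. The key structural observation is that such a collection must either consist entirely of ``small'' cycles (lengths $\le y$ for a parameter $y$ to be chosen, roughly $y=k$), or it contains at least one ``large'' cycle of length in $(y,k]$. In the first case, the number of cycles of length $\le y$ must be at least $k/y$, which is large when $y=k^{1-\delta}$; in the second case, we peel off one large cycle and are left with a fixed set of size $k-j$ for some $j\in(y,k]$.

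Next I would set up the recursion/union bound explicitly. Write $S=\{j: j\le y\}$, so $H(S)=H_y=\log y+O(1)$. By Theorem~\ref{cycles-set-strict}, $\PR(C_S(\sigma)\ge r)\le H(S)^r/r!$. The ``all small'' contribution is then at most $\PR(C_{[y]}(\sigma)\ge k/y) \le (\log y+O(1))^{k/y}/(k/y)!$, which by Stirling is at most $(e(\log y+O(1))/(k/y))^{k/y}$; choosing $y$ so that $k/y\order \log k$, say $y=k/\log k$ up to adjustment, makes the logarithm of this quantity behave like $-(k/y)\log(k/y)+O(k/y)\order -(\log k)(\log\log k)$, which is far smaller than any power $k^{-A}$ — in particular $\ll k^{-\cE}$ — so the ``all small'' case is negligible. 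For the ``contains a large cycle'' case, the probability of having a cycle of a specific length $j\in(y,k]$ is $\E C_j(\sigma)=1/j$ (or we use Theorem~\ref{cycles-set-strict} with the singleton set), and after removing such a cycle the remaining cycles must fix a set of size $k-j$ in the remaining points; bounding this crudely by $1$ or by $i(n-j,k-j)$ and summing $\sum_{y<j\le k} \frac{1}{j} \cdot (\text{something})$ reproduces, up to the structure of the recursion, a quantity of order $k^{-\cE}$ — the exponent $\cE$ emerging exactly as the solution of the optimization $\min_{0<t<1}\{t\log(1/t)+(1-t)\log 2 - \ldots\}$ familiar from the Erd\H{o}s multiplication-table heuristic. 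The cleanest route is probably to set up a self-improving inequality $i(n,k)\le f(k)$ and verify $f(k)=Ck^{-\cE}$ is consistent.

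Actually, a cleaner and more robust approach, which I would favor, is the following: for each way $\sigma$ fixes a $k$-set, order the cycles in that set by decreasing length and let the lengths be $\ell_1\ge \ell_2\ge \cdots \ge \ell_r$ with $\sum \ell_i=k$. Group the cycles dyadically by length: for each $i\ge 0$ let $t_i$ be the number of chosen cycles with length in $(2^i,2^{i+1}]$. Then $\sum_i t_i 2^i \le k$, and the probability that $\sigma$ has at least $t_i$ cycles with length in the interval $I_i=(2^i,2^{i+1}]\cap[n]$ is, by Theorem~\ref{cycles-set-strict}, at most $H(I_i)^{t_i}/t_i! \le (\log 2 + O(2^{-i}))^{t_i}/t_i!$. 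Using independence-type bounds across disjoint dyadic blocks (here Theorem~\ref{cycles_sets_expectation} gives the needed product bound for joint binomial moments), the probability of a given dyadic profile $(t_i)$ is at most $\prod_i \frac{(\log 2)^{t_i}}{t_i!} \cdot (1+o(1))$, and summing over all profiles subject to $\sum_i t_i 2^i \le k$ is a clean counting problem whose answer is $k^{-\cE+o(1)}$ — and with the $3/2$-power log loss this sharpens, but we only need the clean power.

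The main obstacle I anticipate is the bookkeeping in the dyadic/union-bound argument: making sure that (i) the sets $I_i$ are genuinely disjoint so that Theorem~\ref{cycles_sets_expectation} applies to give the product bound on joint binomial moments $\E\prod_i\binom{C_{I_i}(\sigma)}{t_i}\le \prod_i H(I_i)^{t_i}/t_i!$, and (ii) the sum $\sum_{(t_i):\sum t_i 2^i\le k}\prod_i \frac{(\log 2)^{t_i}}{t_i!}$ is estimated correctly so that the exponent that falls out is exactly $\cE=1-\frac{1+\log\log 2}{\log 2}$. This second point is where the specific constant $\cE$ is born, via a saddle-point / Lagrange-multiplier computation on the entropy-type sum; it is routine in principle but must be done carefully, and it is the step where one sees why $\cE$, rather than some other constant, is the right exponent. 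The ``all cycles small'' boundary case and the interaction with $k\le n/2$ (ensuring we never run out of points, and using $i(n,k)=i(n,n-k)$ to stay in the regime $k\le n/2$) are comparatively easy and I would dispatch them quickly.
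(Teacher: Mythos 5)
Your favored dyadic argument has a fatal gap: the moment you replace the event ``$\sigma$ has a sub-collection of cycles whose lengths sum to exactly $k$'' by the event ``$\sigma$ has at least $t_i$ cycles of length in $(2^i,2^{i+1}]$ for each $i$'', you have discarded the exact-sum constraint, and that constraint is the sole source of the saving. Quantitatively, the profile sum you propose is not $k^{-\cE+o(1)}$: with the constraint $\sum_i t_i2^i\le k$ as written, the trivial profile $t\equiv 0$ already contributes $1$; and even with the correct two-sided constraint $k/2\le\sum_i t_i2^i<k$ forced by dyadic grouping, the single profile consisting of one cycle at the top scale contributes $\log 2+o(1)$ --- consistent with the fact that a positive proportion of permutations have a cycle of length in $(k/2,k]$. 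So the union bound over dyadic profiles can never beat $O(1)$, let alone give $k^{-\cE}$. Your first sketch is also not a proof as it stands: with $y=k/\log k$ the ``all small'' bound is $H_y^{k/y}/(k/y)!\approx(\log k)^{\log k}/(\log k)!\approx k$, not $\exp(-(\log k)(\log\log k))$ (you dropped the $(k/y)\log H_y$ term; one needs $k/y\ge C\log k$ with $C$ large), and the peeling recursion $i(n,k)\le \sum_{y<j\le k}\frac1j\,i(n-j,k-j)+\cdots$ picks up a factor $\log\log k$ from the range $j\le k/2$, so an induction at the exponent $\cE$ does not close; the exponent is merely asserted to ``emerge''.

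What is missing is the mechanism that actually produces $\cE$, and it is exactly the interplay between the number $2^{C_{[k]}(\sigma)}$ of candidate fixed sets and the single target size $k$. The paper's proof runs: set $r_0=H_k/\log 2$; by Theorem \ref{cyclesinterval}, $\PR\big(C_{[k]}(\sigma)\ge r_0\big)\ll k^{-Q(1/\log 2)}=k^{-\cE}$. If instead $C_{[k]}(\sigma)<r_0$ and $\sigma$ fixes a $k$-set, factor $\sigma=\a\b$ with $|\a|=k$, $|\b|=n-k$, $C(\a)=j$, $C_{[k]}(\b)=h$, $j+h<r_0$, and count pairs $(\a,\b)$ directly: by Theorem \ref{cycles_sets} the number of such pairs is $\ll n!\,H_k^{j+h}\er^{-2H_k}/(j!\,h!)$, and summing over $j+h<r_0$ gives a Poisson$(2H_k)$ lower-tail bound $\ll k^{-\cE}$ again. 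Any argument that, like your dyadic one, is insensitive to the exact target size $k$ (or that, like your recursion, does not encode the threshold $r_0=H_k/\log 2$ somewhere) cannot produce this power saving; if you want to salvage your outline, the fix is to bound the number of fixed $k$-sets by $2^{C_{[k]}(\sigma)}$ on the event of few small cycles and run the paper's two-case split, rather than a union bound over cycle-length profiles.
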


%
\section{Preliminaries}

The following standard bounds are stated without proof.

\begin{lem}\label{harmonic}
The harmonic sums $H_n$ satisfy

(i) $\log n \le H_n \le 1 +\log n$;

(ii) $H_n = \log n + \gamma + O(1/n)$,
where
$\gamma=0.57721566\ldots$ is Euler's constant.
\end{lem}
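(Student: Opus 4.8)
The plan is to derive both estimates by the classical device of comparing the sum $H_n=\sum_{i=1}^n 1/i$ with the integral $\int \frac{dt}{t}$, using only the monotonicity of $t\mapsto 1/t$. For part~(i): since $1/t$ is decreasing, every integer $i\ge 1$ satisfies $\frac1i \ge \int_i^{i+1}\frac{dt}{t}$, and every integer $i\ge 2$ satisfies $\frac1i \le \int_{i-1}^i \frac{dt}{t}$. Summing the first inequality over $1\le i\le n$ gives $H_n \ge \int_1^{n+1}\frac{dt}{t}=\log(n+1)\ge \log n$; summing the second over $2\le i\le n$ and adding back the $i=1$ term gives $H_n \le 1+\int_1^n\frac{dt}{t}=1+\log n$. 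This is exactly~(i).

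For part~(ii), I would set $a_n:=H_n-\log n$. A short computation gives
\[
a_n-a_{n+1}=\log(n+1)-\log n-\frac{1}{n+1}=\int_n^{n+1}\Bigl(\frac1t-\frac{1}{n+1}\Bigr)\,dt\ge 0,
\]
so $(a_n)_{n\ge 1}$ is nonincreasing; and by part~(i) we have $a_n\ge \log(n+1)-\log n>0$, so the sequence is bounded below. Hence $\gamma:=\lim_{n\to\infty}a_n$ exists and lies in $[0,1]$. For the rate I would telescope: since each difference $a_k-a_{k+1}$ is nonnegative,
\[
0< a_n-\gamma=\sum_{k=n}^{\infty}(a_k-a_{k+1})=\sum_{k=n}^{\infty}\int_k^{k+1}\frac{(k+1)-t}{t(k+1)}\,dt .
\]
On $[k,k+1]$ one has $0\le (k+1)-t\le 1$ and $t\ge k$, so the $k$-th integral is at most $\frac{1}{k(k+1)}\int_k^{k+1}((k+1)-t)\,dt=\frac{1}{2k(k+1)}$; summing the telescoping bound $\sum_{k\ge n}\frac{1}{2k(k+1)}=\frac{1}{2n}$ yields $0<H_n-\log n-\gamma\le \frac{1}{2n}$, which is~(ii) with error $O(1/n)$. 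The numerical value $\gamma=0.57721566\ldots$ is a separate, purely computational matter (e.g.\ from the identity $\gamma=1-\int_1^{\infty}\{t\}\,t^{-2}\,dt$), and I would simply quote it.

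The arguments are entirely routine; the one step that is not completely mechanical is the passage from the qualitative fact that $H_n-\log n$ converges to the quantitative statement $H_n-\log n-\gamma=O(1/n)$, which is precisely why the telescoping-plus-tail estimate above is needed rather than a bare appeal to convergence. Everything else is bookkeeping with monotone Riemann sums.
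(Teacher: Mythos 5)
Your proof is correct and complete: the monotone comparison of $\sum 1/i$ with $\int dt/t$ gives (i), and the telescoping estimate $0<a_n-\gamma=\sum_{k\ge n}(a_k-a_{k+1})\le\sum_{k\ge n}\frac{1}{2k(k+1)}=\frac{1}{2n}$ correctly upgrades mere convergence of $H_n-\log n$ to the quantitative $O(1/n)$ error in (ii). Note that the paper itself states this lemma without proof, as a standard fact, so there is nothing to compare against; your argument is the classical one and would serve as a perfectly adequate proof if one were to be included.
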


\begin{lem}[Stirling's formula]\label{Stirling}
We have $n! \ge (n/\er)^n$ and the asymptotic
\[
n! = \sqrt{2\pi n} (n/\er)^n (1+O(1/n)) \qquad (n\ge 1).
\]
\end{lem}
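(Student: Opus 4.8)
The plan is to treat the two assertions separately. The lower bound $n!\ge (n/\er)^n$ is immediate from the Taylor expansion $\er^n = \sum_{k\ge 0} n^k/k! \ge n^n/n!$, which rearranges at once to $n!\ge n^n\er^{-n}$; only nonnegativity of the omitted terms is used.

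For the asymptotic I would set $a_n = n!\,\er^n / n^{n+1/2}$, so the claim becomes $a_n = \sqrt{2\pi}\,(1+O(1/n))$. First I would show that $(\log a_n)_{n\ge1}$ is strictly decreasing and quantify the decrease. Writing $t = 1/(2n+1)$, one checks $(n+1)/n = (1+t)/(1-t)$ and $n+\tfrac12 = 1/(2t)$, so that, using $\log\frac{1+t}{1-t} = 2\sum_{j\ge0} t^{2j+1}/(2j+1)$,
\[
\log a_n - \log a_{n+1} = \Bigl(n+\tfrac12\Bigr)\log\frac{n+1}{n} - 1 = \frac{1}{2t}\log\frac{1+t}{1-t} - 1 = \sum_{j\ge1}\frac{t^{2j}}{2j+1}.
\]
The right-hand side is positive and is bounded above by $\tfrac13\sum_{j\ge1} t^{2j} = \frac{t^2}{3(1-t^2)} = \frac{1}{12\,n(n+1)} = \frac{1}{12}\bigl(\tfrac1n - \tfrac1{n+1}\bigr)$. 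Hence $\log a_n$ decreases and stays above $\log a_1 - \tfrac1{12}$, so it converges to some limit $c$; summing the last bound over $k\ge n$ gives $0\le \log a_n - c\le \tfrac{1}{12n}$, whence $a_n = \er^c(1+O(1/n))$. Writing $C=\er^c>0$ we get $n! = C\,n^{n+1/2}\er^{-n}(1+O(1/n))$.

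It remains to identify $C=\sqrt{2\pi}$. I would use Wallis' product: from $\prod_{k=1}^N \frac{(2k)^2}{(2k-1)(2k+1)} \to \frac{\pi}{2}$ together with $2\cdot4\cdots(2N)=2^N N!$, $1\cdot3\cdots(2N-1)=(2N)!/(2^N N!)$ and $3\cdot5\cdots(2N+1)=(2N+1)!/(2^N N!)$, the partial product equals $\frac{2^{4N}(N!)^4}{(2N)!\,(2N+1)!}$. Substituting $m! = C\,m^{m+1/2}\er^{-m}(1+O(1/m))$ for $m\in\{N,\,2N,\,2N+1\}$ and simplifying, this ratio tends to $C^2/4$; comparing with $\pi/2$ forces $C^2 = 2\pi$, so $C=\sqrt{2\pi}$ (positivity of $a_n$ fixes the sign). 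Feeding this back yields $n! = \sqrt{2\pi}\,n^{n+1/2}\er^{-n}(1+O(1/n)) = \sqrt{2\pi n}\,(n/\er)^n(1+O(1/n))$, as claimed.

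The monotonicity-and-rate step is a routine computation; the one genuinely nonelementary input is pinning down the constant $\sqrt{2\pi}$, and the hard part — to the extent there is one — is just choosing the cleanest route for it, Wallis' product being my preferred option (alternatively one could run Laplace's method on $\Gamma(n+1)=\int_0^\infty t^n\er^{-t}\,dt$, which is longer but gives the same constant).
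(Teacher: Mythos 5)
Your proof is correct and complete: the lower bound via $\er^n\ge n^n/n!$ is fine, the telescoping estimate $0\le \log a_n-\log a_{n+1}\le \frac{1}{12}\bigl(\frac1n-\frac1{n+1}\bigr)$ for $a_n=n!\,\er^n/n^{n+1/2}$ correctly yields $a_n=\er^c(1+O(1/n))$, and the Wallis-product computation does pin down $\er^c=\sqrt{2\pi}$ (I checked the ratio indeed tends to $C^2/4$). There is nothing in the paper to compare against: Lemma \ref{Stirling} is listed among the ``standard bounds stated without proof,'' so the author supplies no argument at all, and your write-up is a self-contained standard proof of the kind the author implicitly defers to the literature. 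The only external input you rely on is Wallis' product itself, which you cite as known; if one wanted the lemma fully self-contained, that product can be derived elementarily (e.g.\ from $\int_0^{\pi/2}\sin^m t\,dt$ recursions), or one could instead identify the constant by Laplace's method on $\int_0^\infty t^n\er^{-t}\,dt$ as you note — either way the gap is conventional rather than substantive.
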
 

\begin{lem}[Inclusion-exclusion]\label{incl-excl}
Let $a$ be a non-negative integer. For $0\le m\le k$,
\begin{align*}
\one(a=m) &= \sum_{r=m}^\infty (-1)^{r-m} \binom{r}{m} \binom{a}{r}
\\ &= \sum_{r=m}^k (-1)^{r-m} \binom{r}{m} \binom{a}{r} +
(-1)^{k+1-m} \binom{a}{m} \binom{a-m-1}{k-m},
\end{align*}
where the final term is
 at most $\binom{a}{k+1}\binom{k+1}{m}$ in
absolute value.
\end{lem}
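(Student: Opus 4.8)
The plan is to first prove the infinite‑series form
\[
\one(a=m)=\sum_{r=m}^{\infty}(-1)^{r-m}\binom{r}{m}\binom{a}{r},
\]
then derive the truncated identity by evaluating the tail $\sum_{r>k}$ in closed form, and finally read off the stated bound on the error term. The one algebraic fact doing all the work is the ``subset of a subset'' identity $\binom{r}{m}\binom{a}{r}=\binom{a}{m}\binom{a-m}{r-m}$, used with the convention that $\binom{n}{j}=0$ unless $0\le j\le n$ (so the displayed series is actually finite, terminating at $r=a$).

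For the infinite identity: if $a<m$ both sides are $0$, so assume $a\ge m$. Applying the subset‑of‑a‑subset identity, factoring out $\binom{a}{m}$, and reindexing with $s=r-m$ gives
\[
\sum_{r=m}^{\infty}(-1)^{r-m}\binom{r}{m}\binom{a}{r}=\binom{a}{m}\sum_{s=0}^{a-m}(-1)^{s}\binom{a-m}{s}=\binom{a}{m}(1-1)^{a-m},
\]
which is $\binom{a}{m}=1$ if $a=m$ and $0$ if $a>m$; that is, $\one(a=m)$.

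Next I would split the series as $\sum_{r=m}^{k}+\sum_{r=k+1}^{\infty}$ and compute the tail. The same manipulation gives $\sum_{r=k+1}^{\infty}(-1)^{r-m}\binom{r}{m}\binom{a}{r}=\binom{a}{m}\sum_{s\ge k+1-m}(-1)^{s}\binom{a-m}{s}$, so it remains to evaluate a partial alternating binomial sum. For this I use the standard identity $\sum_{s=0}^{t}(-1)^{s}\binom{N}{s}=(-1)^{t}\binom{N-1}{t}$ (immediate by writing $\binom{N}{s}=\binom{N-1}{s}+\binom{N-1}{s-1}$ and telescoping), together with $\sum_{s\ge 0}(-1)^{s}\binom{N}{s}=0$ for $N\ge 1$. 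When $a>m$, with $N=a-m\ge 1$, these give $\sum_{s\ge k+1-m}(-1)^{s}\binom{a-m}{s}=-(-1)^{k-m}\binom{a-m-1}{k-m}=(-1)^{k+1-m}\binom{a-m-1}{k-m}$; when $a\le m$ the tail vanishes, and so does the claimed final term (if $a<m$ then $\binom{a}{m}=0$; if $a=m$ then $\binom{a-m-1}{k-m}=\binom{-1}{k-m}=0$ since $k\ge m$). Subtracting the tail from the infinite identity produces exactly the truncated formula, with final term $(-1)^{k+1-m}\binom{a}{m}\binom{a-m-1}{k-m}$.

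It remains to bound this term. Its absolute value is $\binom{a}{m}\binom{a-m-1}{k-m}$, and Pascal's rule gives $\binom{a-m-1}{k-m}\le\binom{a-m}{k+1-m}$, while the subset‑of‑a‑subset identity gives $\binom{a}{m}\binom{a-m}{k+1-m}=\binom{a}{k+1}\binom{k+1}{m}$; combining these yields the claimed inequality. I do not anticipate a genuine obstacle: the argument is elementary, and the only thing needing care is the bookkeeping of degenerate cases ($a<m$, $m\le a\le k$, empty sums), all of which collapse because the relevant binomial coefficients vanish under the stated convention.
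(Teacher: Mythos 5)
Your proof is correct. The paper states this lemma without proof (it appears among the ``standard bounds ... stated without proof''), adding only the one-line remark that the final claim follows from $\binom{a-m-1}{k-m}\le\binom{a-m}{k-m+1}$ — which is exactly your Pascal-plus-$\binom{a}{m}\binom{a-m}{k+1-m}=\binom{a}{k+1}\binom{k+1}{m}$ step, so your argument supplies the omitted details (including, correctly, the convention $\binom{n}{j}=0$ unless $0\le j\le n$, without which the $a=m$ case of the truncated identity would fail).
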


The final claim comes from the inequality $\binom{a-m-1}{k-m} \le \binom{a-m}{k-m+1}$.

\begin{lem}[Poisson tails; see Norton {\cite[Section 4]{Norton76}}]\label{Poisson_tails}
Let $X$ be Poisson with parameter $\lambda$. Then
\begin{align*}
\PR(X\le \a \lam) &\le \min\bigg(1, \frac{1}{(1-\a)\sqrt{\a \lambda}}  \bigg)\er^{-Q(\a)\lam} \quad (0\le \a\le 1), \\
\PR(X\ge \a \lam) &\le \min\bigg( 1, \frac{1}{\a-1} \sqrt{\frac{\a}{2\pi \lam}}\bigg) \er^{-Q(\a)\lam} \quad (\a\ge 1),
\end{align*}
where $Q(x) = \int_1^x \log t\, dt = x\log x - x + 1.$
Furthermore,
\be\label{Qx_crude}
\frac{x^2}{3} \le Q(1+x) \le x^2 \quad (|x| \le 1)
\ee
and, when $0< x_1 \le x_2\le 1$ we have
\be\label{Qnear0}
Q(x_1)-Q(x_2) \le (-\log x_1)(x_2-x_1). 
\ee
\end{lem}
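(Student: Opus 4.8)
The plan is to treat the lemma in two parts: the two Poisson tail bounds, which are classical and for which I would cite Norton, and the elementary estimates \eqref{Qx_crude} and \eqref{Qnear0} for the rate function $Q$, which I would prove directly by calculus.

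For the tail bounds, the starting point is Chernoff's inequality. If $X$ is Poisson with parameter $\lam$, then $\E\,\er^{tX}=\exp\{\lam(\er^t-1)\}$ for every real $t$, so for $\a\ge 1$ and $t\ge 0$,
\[
\PR(X\ge\a\lam)\le\er^{-t\a\lam}\,\E\,\er^{tX}=\exp\{\lam(\er^t-1-t\a)\},
\]
and the choice $\er^t=\a$ (legitimate since $\a\ge 1$) makes the exponent $-\lam(\a\log\a-\a+1)=-Q(\a)\lam$; the lower-tail bound for $0\le\a\le 1$ is identical with $t=\log\a\le 0$. This already yields both displayed inequalities with the factor $\min(1,\cdot)$ replaced by $1$. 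The sharper polynomial-in-$\lam$ prefactors come from estimating the Poisson tail sum itself more carefully (isolating the largest term, applying Stirling's formula, and dominating the remainder by a geometric series); this is precisely the computation of Norton \cite[Section 4]{Norton76}, which I would invoke rather than reproduce.

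It remains to establish \eqref{Qx_crude} and \eqref{Qnear0}. Since $Q'(t)=\log t$ and $Q(1)=0$, one has $Q(1+x)=\int_0^x\log(1+t)\,dt$. For the upper bound in \eqref{Qx_crude} with $0\le x\le 1$, the inequality $\log(1+t)\le t$ gives $Q(1+x)\le x^2/2\le x^2$; for $-1\le x\le 0$, writing $x=-y$ and using the explicit form $Q(1-y)=(1-y)\log(1-y)+y$, the bound $\log(1-y)\le-y$ gives $(1-y)\log(1-y)\le-y(1-y)$, hence $Q(1-y)\le y^2$. For the lower bound, when $0\le x\le 1$ the estimate $\log(1+t)\ge t-t^2/2$ yields $Q(1+x)\ge x^2/2-x^3/6=x^2(1/2-x/6)\ge x^2/3$; for $x=-y$ with $0<y\le 1$, set $g(y)=(1-y)\log(1-y)+y-y^2/3$, so that $g(0)=0$ and $g'(y)=-\log(1-y)-2y/3\ge y-2y/3=y/3>0$, whence $g\ge 0$ on $[0,1]$. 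Finally, for \eqref{Qnear0}, since $0<x_1\le x_2\le 1$ and $-\log t$ is decreasing on $(0,1]$,
\[
Q(x_1)-Q(x_2)=\int_{x_1}^{x_2}(-\log t)\,dt\le(-\log x_1)(x_2-x_1).
\]

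The only genuine obstacle is the sharpened prefactor in the Poisson tail bounds, and this I would simply attribute to Norton; the $Q$-function estimates are routine, and I expect that part of the write-up to be very short.
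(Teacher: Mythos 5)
Your proposal is correct, and it actually supplies more detail than the paper does: in the source, Lemma \ref{Poisson_tails} is one of the "standard bounds \dots stated without proof," with the tail inequalities simply attributed to Norton and the elementary estimates \eqref{Qx_crude}, \eqref{Qnear0} left to the reader. Your division of labor is the right one. The Chernoff computation $\PR(X\ge\a\lam)\le\exp\{\lam(\er^t-1-t\a)\}$ with $\er^t=\a$ (and its mirror image for the lower tail with $t=\log\a\le0$) correctly recovers the exponent $-Q(\a)\lam$, i.e.\ the bounds with prefactor $1$; the polynomial-in-$\lam$ prefactors genuinely require the term-by-term analysis of the Poisson sum, and citing Norton for that is exactly what the paper itself does. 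Your calculus for \eqref{Qx_crude} checks out in all four cases: $Q(1+x)=\int_0^x\log(1+t)\,dt$, the bounds $\log(1+t)\le t$ and $\log(1+t)\ge t-t^2/2$ for $t\ge0$, the identity $Q(1-y)=(1-y)\log(1-y)+y$ with $\log(1-y)\le-y$, and the monotonicity argument for $g(y)=(1-y)\log(1-y)+y-y^2/3$ via $g'(y)=-\log(1-y)-2y/3\ge y/3$ are all valid (with the endpoint $y=1$, i.e.\ $Q(0)=1$, handled by continuity, and the degenerate case $\a=0$ of the lower tail being the trivial identity $\PR(X=0)=\er^{-\lam}=\er^{-Q(0)\lam}$). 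The proof of \eqref{Qnear0} as $\int_{x_1}^{x_2}(-\log t)\,dt\le(-\log x_1)(x_2-x_1)$ is likewise correct. In short: where the paper cites, you cite; where the paper asserts, you prove, and the proofs are sound.
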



%
\section{Binomial moments}\label{sec:binom}
%
%

We begin by proving a special case of Theorem \ref{cycles_sets_expectation},
where each set $I_j$ is a singleton.  This is Theorem 7 in \cite{watterson}.

\begin{lem}\label{cycles}
Let $m_1,\ldots,m_n$ be non-negative integers with $m_1+2m_2+\cdots+nm_n\le n$.  Then
 \[
\E \prod_{j=1}^n \binom{C_j(\sigma)}{m_j} = \prod_{j=1}^n \frac{(1/j)^{m_j}}{m_j!}.
\]
If  $m_1+2m_2+\cdots+nm_n > n$, then the left side is zero.
\end{lem}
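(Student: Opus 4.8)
The plan is a double-counting argument. First I would observe that, since the cycles of a permutation $\sigma$ are pairwise disjoint (hence pairwise distinct) subsets of $[n]$, the number $\binom{C_j(\sigma)}{m_j}$ counts the ways to select an unordered family of $m_j$ length-$j$ cycles all belonging to $\sigma$; taking the product over $j$, the quantity $\prod_{j=1}^n \binom{C_j(\sigma)}{m_j}$ counts the number of \emph{configurations} --- sets $\cF$ of pairwise disjoint cycles on $[n]$ having exactly $m_j$ cycles of length $j$ for every $j$ --- every cycle of which is a cycle of $\sigma$. Write $s=\sum_{j=1}^n jm_j$ for the number of points covered by such a configuration, and note $s\le n$ under the hypothesis. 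Then
\[
\sum_{\sigma\in\cS_n}\prod_{j=1}^n \binom{C_j(\sigma)}{m_j}
=\#\big\{(\sigma,\cF):\text{every cycle of }\cF\text{ is a cycle of }\sigma\big\}
=\sum_{\cF}\#\big\{\sigma\in\cS_n:\text{every cycle of }\cF\text{ is a cycle of }\sigma\big\},
\]
where the outer sum ranges over all configurations $\cF$ of the prescribed type.

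Next I would evaluate the two factors separately. For a fixed configuration $\cF$, covering a set $S\subseteq[n]$ of $s$ points, a permutation $\sigma\in\cS_n$ has every cycle of $\cF$ among its cycles precisely when $\sigma$ restricted to $S$ equals the permutation of $S$ prescribed by $\cF$ and $\sigma$ maps $[n]\setminus S$ into itself (otherwise arbitrarily); hence the inner count is $(n-s)!$, independent of $\cF$. To count the configurations, I would build one by writing an ordered sequence of $s$ distinct elements of $[n]$ --- there are $n!/(n-s)!$ of these --- cutting it into consecutive blocks of sizes $1$ (taken $m_1$ times), then $2$ (taken $m_2$ times), \ldots, then $n$ (taken $m_n$ times), and reading each block cyclically as a cycle. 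Every configuration arises this way, and a given configuration is produced by exactly $\prod_{j=1}^n m_j!\,j^{m_j}$ sequences: the factor $m_j!$ from permuting the $m_j$ blocks of size $j$ among their slots, and $j^{m_j}$ from the cyclic rotations within those blocks. Therefore the number of configurations is $\dfrac{n!/(n-s)!}{\prod_{j=1}^n m_j!\,j^{m_j}}$, and combining,
\[
\sum_{\sigma\in\cS_n}\prod_{j=1}^n \binom{C_j(\sigma)}{m_j}
=\frac{n!/(n-s)!}{\prod_{j=1}^n m_j!\,j^{m_j}}\cdot(n-s)!
=\frac{n!}{\prod_{j=1}^n m_j!\,j^{m_j}},
\]
and dividing by $n!$ yields $\E\prod_{j=1}^n\binom{C_j(\sigma)}{m_j}=\prod_{j=1}^n \dfrac{(1/j)^{m_j}}{m_j!}$.

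For the final assertion, if $m_1+2m_2+\cdots+nm_n>n$ then for every $\sigma\in\cS_n$ the product $\prod_{j=1}^n\binom{C_j(\sigma)}{m_j}$ is $0$: a nonzero value would force $C_j(\sigma)\ge m_j$ for all $j$, whence $n=\sum_j jC_j(\sigma)\ge\sum_j jm_j>n$, a contradiction. So the expectation vanishes.

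I expect the only delicate point to be the bookkeeping in the configuration count --- pinning down the overcounting factor $\prod_j m_j!\,j^{m_j}$ precisely --- but this is routine; everything else is a clean interchange of the order of summation. Note that this argument is self-contained and does not invoke Cauchy's formula (Theorem~\ref{thm:Cauchy}), which is important since the paper derives Cauchy's formula from Theorem~\ref{cycles_sets_expectation}.
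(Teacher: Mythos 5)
Your proof is correct and is essentially the paper's argument: both count pairs $(\sigma,\cF)$ where $\cF$ is a family of disjoint cycles with $m_j$ cycles of length $j$ contained in $\sigma$, arriving at $n!/\prod_j m_j!\,j^{m_j}$ and dividing by $n!$. The only cosmetic difference is that the paper enumerates configurations by choosing unordered supports via a multinomial coefficient and arranging each $j$-set into a cycle in $(j-1)!$ ways, while you enumerate ordered sequences and divide by the overcount $\prod_j m_j!\,j^{m_j}$; these are the same count.
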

\begin{proof}
The second assertion is obvious, since the only way for the product on the left to be positive is for the sum of the cycle lengths to exceed $n$. Now assume that
$m_1+2m_2+\cdots+nm_n\le n$. 
 The number of ways of choosing from $[n]$ a disjoint collection 
  of $m_1$ $1-$element sets, $m_2$  $2-$element sets,
$\ldots$, $m_n$ $n-$element sets is equal to
\[
 \binom{n}{\underbrace{1 \cdots 1}_{m_1} \underbrace{2 \cdots 2}_{m_2} \cdots \underbrace{n\cdots n}_{m_n} \, t} 
 \frac{1}{m_1! \cdots m_n!} = \frac{n!/t!}{\prod_{j=1}^n (j!)^{m_j} m_j!},
 \]
where $t=n-(m_1+2m_2+\cdots+nm_n)$.
A $k$-element set may be arranged into a cycle in $(k-1)!$ ways.
Thus, 
the number of ways to arrange the elements of these sets into cycles is $(0!)^{m_1} (1!)^{m_2} \cdots (n-1)!^{m_k}$.
Finally, the $t$ elements not used in any of these cycles may be permuted in $t!$ ways.
\end{proof}

This special case suffices to prove Theorems \ref{thm:genfcn} and \ref{thm:Cauchy}.

\begin{proof}[Proof of Theorem \ref{thm:Cauchy} (Cauchy's Theorem)] 
Apply Lemma \ref{cycles}, noting that $\binom{C_j(\sigma)}{m_j}\ne 0$
for all $j$ if and only if $C_j(\sigma)=m_j$ for every $j$.
\end{proof}

\begin{proof}[Proof of Theorem \ref{thm:genfcn}]
Using Cauchy's formula, we have
\begin{align*}
\sum_{n,k} \PR_n \big(C_I(\sigma)=k,C_{[n]\setminus I}(\sigma)=0\big) x^n y^k &=
\sum_{n,k} x^n y^k \ssum{\sum_{i\in I} a_i = k \\ \sum_{i\in I} ia_i=n}
\prod_{i\in I} \frac{(1/i)^{a_i}}{a_i!} \\
&= \sum_{a_i\ge 0 : i\in I} \frac{x^{\sum ia_i} y^{\sum a_i} (1/i)^{a_i}}{\prod a_i!}\\
&=\exp \bigg\{ y \sum_{i\in I} \frac{x^i}{i} \bigg\}.\qedhere
\end{align*}
\end{proof}

\begin{proof}[Proof of Theorem \ref{cycles_sets_expectation}]
Consider a set $A$ of size $C_{I_j}(\sigma)$,
and partition $A$ into subsets $A_r$, where
$|A_r|=C_r(\sigma)$ for $r\in I_j$.  Then
\[
\prod_{j=1}^k\binom{C_{I_j}(\sigma)}{m_j} = \sum_{\eqref{cycles-sets-2}}\;
\prod_{j=1}^k \prod_{r\in I_j} \binom{C_r(\sigma)}{m_{j,r}},
\]
where the summation is over tuples $(m_{j,r})_{1\le j\le k,r\in I_j}$
satisfying the system
\be\label{cycles-sets-2}
\sum_{r\in I_j} m_{j,r} = m_j \quad (1\le j\le k). 
\ee
Thus,
\be\label{cycles-sets-1}
\E \prod_{j=1}^k \binom{C_{I_j}(\sigma)}{m_j} = \sum_{\eqref{cycles-sets-2}} \E \prod_{j=1}^k \prod_{r\in I_j} \binom{C_r(\sigma)}{m_{j,r}}.
\ee
Using Lemma \ref{cycles}, the expectation on the right side of \eqref{cycles-sets-1} equals
\[
\prod_{j=1}^k \prod_{r\in I_j} \frac{(1/r)^{m_{r,j}}}{m_{r,j}!}
\]
provided that
\be\label{cycles-sets-3}
\sum_{j=1}^k \sum_{r\in I_j} r m_{r,j} \le n, 
\ee
and is zero otherwise.  

If $\sum_{j=1}^k m_j \max(I_j)\le n$, then \eqref{cycles-sets-3} will always be satisfied
as long as \eqref{cycles-sets-2} holds,
and therefore
\[
\E \prod_{j=1}^k \binom{C_{I_j}(\sigma)}{m_j} =
 \prod_{j=1}^k \sum_{\eqref{cycles-sets-2}} \prod_{r\in I_j} \frac{(1/r)^{m_{r,j}}}{m_{r,j}!} = \prod_{j=1}^k \frac{H(I_j)^{m_j}}{m_j!},
\]
as claimed.  On the other hand, if
 $\sum_{j=1}^k m_j \max(I_j) > n$, then there is some choice of the parameters
$(m_{j,r})$ satisfying \eqref{cycles-sets-2} but violating \eqref{cycles-sets-3}, 
and the left side is strictly less than the right side.
Specifically, we may take $m_{j,\max I_j}=m_j$ for each $j$
and $m_{j,r}=0$ otherwise. 
\end{proof}

%
\section{Local limit theorems}\label{sec:local}
%
%

\begin{proof}[Proof of Goncharov's local limit theorem, Theorem \ref{thm:Goncharov}]
By Lemma \ref{incl-excl} and Lemma \ref{cycles}, we obtain
\begin{align*}
\PR \big( C_j(\sigma)=m \big) &= \E
 \sum_{r=m}^\infty (-1)^{r-m} \binom{r}{m} \binom{C_j(\sigma)}{r} 
=  (-1)^m \sum_{r=m}^{\fl{n/j}} \binom{r}{m}  \frac{(-1/j)^r}{r!}.  
\end{align*}
The desired equality follows by setting $r=h+m$.
\end{proof}

While Theorem \ref{thm:Goncharov} provides a exact formula
for the local statistic $\PR(C_j(\sigma)=m)$, an analogous formula for
$\PR(C_I(\sigma)=m)$ with an arbitrary set $I$ will necessarily be far more complicated.  However, borrowing ideas from the theory
of averages of multiplicative functions in number theory, we 
give a relatively sharp upper bound for this quantity, and
more generally for the joint probability of 
$C_{I_j}(\sigma)=m_j$ for $j=1,\ldots,k$.

We begin with a rather complicated identity for the joint distribution
of the quantities $C_{I_i}$.

\begin{lem}\label{lem:CIjmj}
Let $I_1,\ldots,I_r$ be disjoint subsets of $[n]$ and $m_1,\ldots,m_r$
be non-negative integers.  Denote $I_0 = [n] \setminus (I_1\cup \cdots \cup I_r)$.
Let $\cT$ be the set of indices $i$ with $m_i>0$, together
with the number 0 if $I_0$ is nonempty.  Then
\[
  \PR\big( C_{I_j}(\sigma)=m_j\; (1\le j\le r) \big)
  = \frac{1}{n}\sum_{t\in \cT} \sum_{h \in I_t} 
  \ssum{b_1,\dots, b_n \geq 0 \\ b_1 + 2b_2 + \cdots + nb_n = n - h \\ \sum_{i\in I_j} b_i =m_j-\one(t=j),\  (1\le j\le r) } 
  \prod_{i=1}^n \frac{(1/i)^{b_i}}{b_i!}.
\]
\end{lem}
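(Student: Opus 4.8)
The plan is to compute $\PR(C_{I_j}(\sigma)=m_j\ (1\le j\le r))$ by a weighted count of permutations, keeping track of one distinguished cycle. First I would observe that every permutation $\sigma\in\cS_n$ with $C_{I_j}(\sigma)=m_j$ contains at least one cycle whose length lies in some $I_t$ with $t\in\cT$: indeed, either some $m_i>0$ (forcing a cycle of length in $I_i$), or all $m_i=0$ and, since the cycle lengths sum to $n$, there must be a cycle whose length lies in $I_0=[n]\setminus(I_1\cup\cdots\cup I_r)$, which is then nonempty so $0\in\cT$. I would then fix a canonical way of selecting such a distinguished cycle — say, the cycle containing the smallest element among all elements lying in cycles of length in $\bigcup_{t\in\cT}I_t$ — so that each $\sigma$ is counted exactly once by the outer sum over $t$ and $h$.

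Next I would parametrize: choose $t\in\cT$, choose the length $h\in I_t$ of the distinguished cycle, and count the pairs (distinguished cycle of length $h$, rest of the permutation on the remaining $n-h$ points). The key combinatorial step is that the number of ways to build a single $h$-cycle through a fixed element (namely the smallest element of $[n]$, to avoid overcounting the choice of the $h$-set) times the symmetry of relabeling works out so that, after dividing by $n!$ to pass to probability, the contribution of the distinguished cycle is exactly the factor $\tfrac1n$ out front and the choice of $h$. Concretely, the number of $h$-cycles on a prescribed $h$-element subset of $[n]$ is $(h-1)!$, there are $\binom{n}{h}$ such subsets, so the count of distinguished $h$-cycles is $\binom{n}{h}(h-1)! = \tfrac{n!}{h\,(n-h)!}$; combined with the $(n-h)!$-point remainder and the $1/n!$ normalization this gives $\tfrac1{n}\cdot\tfrac1{(n-h)!}\cdot(\#\text{remainders})$ — but the canonical-choice convention is what makes the $\binom{n}{h}$ appear without an extra overcounting factor, so I must be careful to phrase the selection rule so that summing over $h\in I_t$ and $t\in\cT$ reproduces each $\sigma$ once.

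For the remainder on $n-h$ points, I would invoke Cauchy's formula (Theorem \ref{thm:Cauchy}) applied in $\cS_{n-h}$: the probability that a random permutation of $n-h$ points has exactly $b_i$ cycles of length $i$ for each $i$ is $\prod_i \tfrac{(1/i)^{b_i}}{b_i!}$ when $\sum_i i b_i = n-h$. The constraint on the remainder is that, together with the distinguished cycle of length $h\in I_t$, the total cycle counts in each $I_j$ equal $m_j$; since the distinguished cycle already accounts for one cycle in $I_t$, the remainder must have $\sum_{i\in I_j} b_i = m_j - \one(t=j)$. Summing $\prod_i\tfrac{(1/i)^{b_i}}{b_i!}$ over all such $(b_1,\dots,b_n)$ and assembling the pieces yields exactly the claimed formula. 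The main obstacle is bookkeeping the distinguished-cycle convention precisely enough that no permutation is double-counted and none is missed — everything else is a direct application of Cauchy's formula.
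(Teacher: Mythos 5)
Your skeleton --- distinguish one cycle $\alpha$ of length $h\in I_t$, write $\sigma=\alpha\beta$, observe that $\beta$ must satisfy $\sum_{i\in I_j}b_i=m_j-\one(t=j)$, and apply Cauchy's formula to $\beta\in\cS_{n-h}$ --- is exactly the structure of the paper's proof. But the step the whole lemma hinges on, the emergence of the factor $\frac1n$, is not correct as written. If each $\sigma$ is to be counted exactly once by a canonical selection rule, then the admissible distinguished $h$-cycles are not all $\binom{n}{h}(h-1)!$ of them: under your own rule (the cycle containing the smallest relevant element, which on the event in question is simply the cycle containing the element $1$, since every cycle length of such a $\sigma$ lies in some $I_t$ with $t\in\cT$) only the $\binom{n-1}{h-1}(h-1)!=\frac{(n-1)!}{(n-h)!}$ cycles through $1$ can occur. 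Your concrete computation instead takes $\binom{n}{h}(h-1)!=\frac{n!}{h\,(n-h)!}$ and asserts that after multiplying by the $(n-h)!$-point remainder count and dividing by $n!$ one gets the prefactor $\frac1n\cdot\frac1{(n-h)!}$; it actually gives $\frac1{h\,(n-h)!}$, off by a factor $n/h$, which would produce a different (false) identity. You flagged that the selection rule needed care but left it unresolved, and that unresolved bookkeeping is precisely where the $\frac 1n$ must come from.

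There are two clean ways to finish. (i) Keep your canonical rule ``the cycle containing $1$'': then $(t,h,\alpha,\beta)\mapsto\alpha\beta$ is a bijection onto the event (the index $t$ is determined by $h$, and $t\in\cT$ automatically), the number of admissible $\alpha$ is $\frac{(n-1)!}{(n-h)!}$, the number of admissible $\beta$ is $(n-h)!\sum_{(b_i)}\prod_i\frac{(1/i)^{b_i}}{b_i!}$ by Cauchy's formula, and dividing by $n!$ gives exactly the stated identity. (ii) The paper avoids any canonical choice: since the cycle lengths of $\sigma$ sum to $n$, one has $\sum_{\alpha|\sigma,\ \alpha\text{ a cycle}}|\alpha|=n$, hence $n\,\#\{\sigma:\cdots\}=\sum_\sigma\sum_{\alpha|\sigma}|\alpha|$; now every $h$-cycle $\alpha$ is permitted but carries weight $h$, and $h\cdot\binom{n}{h}(h-1)!=\frac{n!}{(n-h)!}$, which again produces the $\frac1n$ with no interaction at all between $\alpha$ and $\beta$. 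With either repair, the rest of your argument (the adjusted constraints and the application of Cauchy's formula) goes through verbatim.
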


\begin{proof}
 Evidently
\[
  n \# \{\sigma \in \cS_n:  C_{I_1}(\sigma)=m_1,\ldots,C_{I_r}(\sigma)=m_r\} = \ssum{\sigma \in \cS_n \\ C_{I_j}(\sigma)=m_j \; (1\le j\le r) } \;\; \ssum{\alpha|\sigma \\ \a \text{ a cycle}} |\a|.
\]
Write $\sigma=\a \beta$ and let $h=|\a|$.   
 Thus, for some $t\in \cT$,  we have $|\a|=h\in I_t$
 and
\[
(C_{I_1}(\beta),\ldots,C_{I_r}(\beta))=(m_1-\one(t=1),\ldots,m_r-\one(t=r)).
\]
It is permissible to think of $\beta\in \cS_{n-h}$ and thus
\begin{align*}
  n  \# \{\sigma \in \cS_n: C_{I_1}(\sigma)=m_1,\ldots,C_{I_r}(\sigma)=m_r\}
  &= \sum_{t\in \cT} \sum_{h\in I_t} \; \ssum{\a\in\cS_n, |\a|=h \\ \a\text{ a cycle}} h \ssum{\beta\in \cS_{n-h}\\ C_{I_i}(\beta)=m_i-\one(t=i), (1\le i\le r)} 1  \\
  &=  \sum_{t\in \cT} \sum_{h\in I_t} \frac{n!}{(n-h)!} \ssum{\beta\in \cS_{n-h} \\ C_{I_i}(\beta)=m_i-\one(t=i), (1\le i\le r) } 1.
\end{align*}
Now subdivide the sum according the cycle type $(b_1,\ldots,b_{n})$ of
the permutation $\beta$, use Cauchy's formula (Thm. \ref{thm:Cauchy}) to 
count such permutations for each type, and divide by $n$.  The desired identity follows.
\end{proof}

\begin{proof}[Proof of Theorem \ref{cycles_sets}]
The right side in Lemma \ref{lem:CIjmj} is at most
 \[
 \frac{1}{n} \sum_{t\in \cT}
  \ssum{b_1,\dots, b_n \geq 0 \\ \sum_{i\in I_j} b_i =m_j-\one(t=j) \; (1\le j\le r)  } \prod_i \frac{(1/i)^{b_i}}{b_i!} := \frac{Y}{n},
\]
  say.   By the multinomial theorem,
  \begin{align*}
 Y & =    \sum_{t\in \cT} \ssum{b_i\ge 0 \; (i\in I_1\cup \cdots \cup I_r) \\ \sum_{i\in I_j} b_i = m_j-\one(t=j) \; (1\le j\le r) } 
  \frac{1}{\prod_{i\in I_1\cup \cdots \cup I_r} b_i! i^{b_i}} \sum_{b_i\ge 0 \; (i\in I_0)} \frac1{\prod_{i\in I_0} b_i! i^{b_i}}\\
  &=  \sum_{t\in \cT} \frac{m_t}{H(I_t)}\prod_{j=1}^r \frac{H(I_j)^{m_j}}{m_j!} \er^{H(I_0)}.
\end{align*}
 The claimed bound now follows from
$H(I_0) = H_n - H(I_1)-\cdots-H(I_r)$.
\end{proof}

Later, we will sharpen the conclusion when $r=1$, $I_1=[k]$, $m_1=0$
(permutations lacking small cycles)
and when $r=1$, $I_1=\{k+1,\ldots,n\}$ and $m_1=0$ (permutations lacking
large cycles).

\begin{proof}[Proof of Theorem \ref{single-set-tails}]
For brevity, let $H=H(I)$.
For the first inequality, apply Corollary \ref{cycles-single-set}
for all $m\le \lambda H$, using $H_n\le \log n + 1$, followed by 
an application of Lemma \ref{Poisson_tails}.  This gives
\[
\PR( C_I(\sigma) \le \lambda H ) \le 2 \sum_{m\le \lambda H}
\er^{1-H} \frac{H^m}{m!} \le 2\er^{1-Q(\lambda)H}.
\]
The second inequality is similar.  We have
\begin{align*}
\PR( C_I(\sigma) \ge \lambda H+1 ) &\le  \sum_{m\ge \lambda H+1} \er^{1-H} \bigg( \frac{H^m}{m!} + \frac{H^{m-1}}{(m-1)!} \bigg) \\
&\le  2 \sum_{m\ge \lambda H} \er^{1-H} \frac{H^m}{m!} 
\le 2\er^{1-Q(\lambda)H}.
\end{align*}

The third assertion is trivial if $\psi \le 1$, thus we may assume that
$\psi>1$, and in particular that $H>1$.  
Define $\lambda^{\pm}$ by
\[
\lambda^- H = H-\psi\sqrt{H}, \qquad
\lambda^+ H + 1 = H + \psi\sqrt{H}.
\]
In particular, $0 \le \lambda^- \le 1 \le \lambda^+ \le 2$.
Apply the first inequality in Theorem \ref{single-set-tails} with $\lambda=\lambda^-$ and the second inequality in  Theorem \ref{single-set-tails} with
$\lambda=\lambda^+$, obtaining
\[
 \PR \( |C_{I}(\sigma) - H| \ge  \psi \sqrt{H} \)
\le 2 \er^{1-Q(\lambda^-)H} + 2\er^{1-Q(\lambda^+)H}. 
\] 
By \eqref{Qx_crude},
\[
Q(\lambda^-) = Q\(1-\frac{\psi}{H^{1/2}}\) \ge \frac{\psi^2}{3 H}
\]
and
\[
Q(\lambda^+) = Q\(1-\frac{\psi}{H^{1/2}} + \frac{1}{H} \) \ge 
\frac{1}{3H} (\psi - 1/\sqrt{H})^2 \ge \frac{\psi^2-2}{3H}
\]
and the third assertion follows, since $2\er + 2\er^{5/3} \le 20$.
\end{proof}

\begin{proof}[Proof of Theorem \ref{cyclesinterval}]
Let $I=[a,b] \cap \NN$, $H=H(I)$ and let $K$ be a sufficiently large constant.
The conclusions are trivial when $b/a \le K$, henceforth we assume that 
$b/a > K$.  By \eqref{Qx_crude}, the assertions are also trivial when
\[
1 - \frac{1}{\sqrt{\log(b/a)}} \le \lambda \le 1 +  \frac{1}{\sqrt{\log(b/a)}},
\]
and henceforth we assume that
\be\label{cor18-lam1}
|\lambda -1| >  \frac{1}{\sqrt{\log(b/a)}}.
\ee
 By Lemma \ref{harmonic},
\be\label{thm18-H}
H = \log(b/a) + O(1).
\ee
As the first assertion follows from Theorem \ref{single-set-tails}
if $\lambda=0$, we may assume that $\lambda>0$.

Firstly, suppose that $0< \lambda\le 1$ and that \eqref{cor18-lam1} holds.
If we define $\lambda'$ by
\[
\lambda \log(b/a) = \lambda'H,
\]
then $\lambda'\le 1$,
and thus by Theorem \ref{single-set-tails},
\[
\PR \( C_{I}(\sigma) \le \lam \log (b/a) \) =
\PR(C_I(\sigma) \le \lambda' H)\le 2\er^{1-Q(\lambda')H}.
\]
By \eqref{thm18-H},
\[
|\lambda-\lambda'| \ll \frac{\min(\lambda,\lambda')}{\log(b/a)}
\]
and hence \eqref{Qnear0} implies that
\[
Q(\lambda)-Q(\lambda') \ll (-\log \min(\lambda,\lambda') )  \frac{\min(\lambda,\lambda')}{\log(b/a)} \ll \frac{1}{\log (b/a)} \ll \frac{1}{H}
\]
and the first assertion follows.

The proof of the second bound is similar.  Suppose that $1\le \lambda \le \lambda_0$
and \eqref{cor18-lam1} holds.
If we define $\lambda'$ by 
\[
\lambda \log(b/a) = \lambda'H+1,
\]
then  $1\le \lambda' \le 2\lambda_0$ if $K$ is large enough.
Theorem \ref{single-set-tails} then implies that
\[
\PR \( C_{I}(\sigma) \ge \lam \log (b/a) \) =
\PR(C_I(\sigma) \ge \lambda' H+1)\le 2\er^{1-Q(\lambda')H}.
\]
By \eqref{thm18-H}, $|\lambda-\lambda'| \ll_{\lambda_0} \frac{1}{\log(b/a)} \ll 1/H$.
Since $Q'(x) \le \log(2\lambda_0)$ for $1\le x\le 2\lambda_0$,
we have 
\[
|Q(\lambda)-Q(\lambda')|  \ll_{\lambda_0} 1/H
\]
and the second assertion follows.

The final estimate follows from the first two, with $\lambda_0=2$,
and the bound \eqref{Qx_crude} for $Q(u)$.
\end{proof}

\begin{proof}[Proof of Theorem \ref{cyclesinterval_allm}]
We may assume that $\psi$ is sufficiently large.
Let
\[
k_1 = \fl{\log \xi}+1, \qquad k_2 = \fl{\log n},
\]
and for $k_1 \le k\le k_2$, let $t_k = \er^k$.  Put $t_{k_1-1}=\xi$
and $t_{k_2+1}=n$.  For brevity, write $C(\sigma;t) := \sum_{j\le t} C_j(\sigma)$.
 For each $k$, $k_1-1 \le k\le k_2+1$, let
$N_k(x)$ be the probability 
that
\be\label{omegantk}
|C(\sigma;t_k)-\log t_k| \ge 2\sqrt{(k-1)\log(k-1)}-1.
\ee
As $\log t_k = k+O(1)$ for all $t_k$ (including the endpoints),
\[
2\sqrt{(k-1)\log(k-1)}-1 = \psi \sqrt{\log t_k}, \quad \psi=2\sqrt{\log k}+O(1/\sqrt{k}). 
\]
Since $k$ is sufficiently large, for all $k\ge k_1$ we have
$\psi \le \sqrt{\log t_k}$.
By the third part of Theorem \ref{cyclesinterval},
\[
N_k(x) \ll \er^{-\frac13 \psi^2} \ll \frac{1}{k^{4/3}}.
\]
Summing over $k$, we see that the probability
that \eqref{omegantk} holds for some $k$ is bounded by
 $O(1/(\log \xi)^{1/3})$.   Now
suppose that \eqref{omegantk} fails for every $k$ with $k_1-1\le k\le k_2+1$.
Let $\xi \le t\le x$ and suppose that $t_k \le  t\le t_{k+1}$.  Evidently,
\[
C(\sigma;t_k) \le C(\sigma;t) \le C(\sigma;t_{k+1}).
\]
Since $\log t_k \ge k$ and $\log t_{k+1} \le k+1$, $k\le \log t \le k+1$.
By the failure of \eqref{omegantk} at every $k$,
\[
 C(\sigma;t)\ge \log t_k - 2\sqrt{(k-1)\log(k-1)}+1 \ge \log t-2\sqrt{\log t\log\log t}
\]
and
\[
C(\sigma;t) \le \log t_{k+1} + 2\sqrt{k\log k} - 1 \le  \log t+2\sqrt{\log t\log\log t}.\qedhere
\]
\end{proof}

\begin{proof}[Proof of Theorem \ref{Djn}]
We may suppose that $\theta\ge \theta_0$, where $\theta_0$ is a 
sufficiently large, absolute constant, for otherwise the conclusion
of the Corollary is trivial if the implied constant is large enough.  
Let $\xi = \fl{\er^{(2/3)\theta}}$.  By  Theorem
\ref{cyclesinterval_allm}, with probability $1-O(1/\theta^{1/3})$,
we have
\be\label{Djn:Cm}
|C_{[m]}(\sigma)-\log m| < 2\sqrt{\log m\log\log m} \qquad (\xi \le m\le n).
\ee
Also, by Corollary \ref{two-equal-large-cycles}, with
probability $1-O(1/\xi)$ all the cycles of $\sigma$ of length $\ge \xi$ 
have distinct lengths.
Now suppose that $\sigma$ is a permutation satisfying \eqref{Djn:Cm},
and such that the cycles of $\sigma$ with lengths $\ge \xi$ have distinct lengths.
We suppose that $\theta_0$ is so large that the right side of the inequality
in \eqref{Djn:Cm} is at most $\frac12 \log m$ for every $m\ge \xi$.  
In particular, 
\[
 C_{[\xi]}(\sigma) < \frac32 \log \xi \le \theta,
\]
that is, $D_\theta(\sigma) > \xi$.
Thus, we may apply \eqref{Djn:Cm} with $m=D_j(\sigma)$ for all $\theta\le j\le C(\sigma)$.  As the cycle lengths $\ge \xi$ are distinct,
we have $j=C_{[m]}(\sigma) > \frac12 \log D_j(\sigma)$ and hence
\[
|j - \log D_j(\sigma)| < 2\sqrt{\log D_j(\sigma)\log\log D_j(\sigma)} <
2\sqrt{2j\log(2j)} < 3\sqrt{j\log j}
\]
provided that $\theta_0$ is large enough (and hence $j$ is large enough).
\end{proof}

\begin{proof}[Proof of Theorem \ref{allsets-local}]
If $k=1$, $\PR(C(\sigma)=1)=1/n$.  Now suppose $k\ge 2$.
We begin with Lemma \ref{lem:CIjmj},
which implies that 
\be\label{allsets-equality}
n \cdot \PR(C(\sigma)=k)= \ssum{b_1,\ldots,b_n\ge 0 \\ 
b_1+2b_2+\cdots \le n \\ b_1+\cdots+b_n=k-1} 
\frac{1}{\prod_{i\le n} b_i! i^{b_i}}.
\ee
We restrict the
summations to $b_i=0\;\; (i>m)$ for
some parameter $m\in [1,n]$ to be chosen later.
Using
\[
\one(b_1+2b_2+\cdots + mb_m \le n) \ge \frac{n-(b_1+2b_2+\cdots + mb_m)}{n}
\]
and the multinomial theorem,
\begin{align*}
n\cdot \PR(C(\sigma)=k) &\ge \frac{1}{n} \ssum{b_1,\ldots,b_m\ge 0 \\ b_1+\cdots+b_m=k-1} \frac{n-(b_1+2b_2+\cdots+mb_m)}{\prod_{i\le m} b_i! i^{b_i}}\\
&=\frac{H_m^{k-1}}{(k-1)!} - \frac{m}{n}
\cdot \frac{H_m^{k-2}}{(k-2)!}\\
&= \frac{H_m^{k-1}}{(k-1)!}\bigg( 1 - \frac{m(k-1)}{nH_m} \bigg).
\end{align*}
When $1\le k \le \log n$, we take $m=n$ and note that
$H_m=H_n\ge \log n$.  This proves \eqref{lower-smallk}.

To obtain the 2nd part of Theorem \ref{allsets-local}, we 
fix $A \ge 2$ and take $m=n/(2A)$.  
We have $H_m = H_n + O(\log A)$  and $k\le A\log n \le A H_n$.
Hence, for $n$ large enough,
\[
\PR(C(\sigma)=k) \ge \frac{H_m^{k-1}}{3n(k-1)!} 
\ge c(A) \frac{H_n^{k-1}}{(k-1)!} \er^{-H_n} 
\]
for some positive $c(A)$.
\end{proof}



\section{Conditioning on the total number of cycles}\label{sec:conditioning}

We will use an explicit Chernoff bound for tails of the binomial distribution.
Denote by $\Bin(k,p)$ a binomial random variable corresponding to $k$
trials, and parameter $p\in [0,1]$.

\begin{lem}[{\cite[Lemma 4.7.2]{Ash}}]\label{binomial_tails}
If $0<p<1$ and $\beta\le p$ then we have
\[
\PR(\Bin(n,p)\le \beta n) \le \exp \left\{ - n \( \b \log \frac{\b}{p} + (1-\b) \log \frac{1-\b}{1-p} \) \right\}
\le \exp \bigg\{-\frac{(p-\beta)^2n}{3p(1-p)} \bigg\}.
\]
Replacing $p$ with $1-p$ we also have for $\beta\ge p$,
\[
\PR(\Bin(n,p)\ge \beta n) \le  \exp \bigg\{-\frac{(p-\beta)^2n}{3p(1-p)} \bigg\}.
\]
\end{lem}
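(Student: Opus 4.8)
The plan is to use the standard exponential-moment (Chernoff) method. Write $\Bin(n,p)=X_1+\cdots+X_n$ with the $X_i$ independent Bernoulli$(p)$. For any $s\ge 0$, Markov's inequality applied to $\er^{-s\,\Bin(n,p)}$ together with independence give
\[
\PR\big(\Bin(n,p)\le \beta n\big)=\PR\big(\er^{-s\,\Bin(n,p)}\ge \er^{-s\beta n}\big)\le \er^{s\beta n}\big(1-p+p\er^{-s}\big)^n .
\]
First I would take logarithms and minimise the convex function $\varphi(s)=s\beta+\log(1-p+p\er^{-s})$ over $s\ge 0$. The stationarity condition $\varphi'(s)=0$ rearranges to $p\er^{-s}=\beta(1-p)/(1-\beta)$, a quantity lying in $(0,p]$ precisely because $\beta\le p$, so the minimiser $s_*$ is genuinely nonnegative and admissible, and one has the convenient identity $1-p+p\er^{-s_*}=(1-p)/(1-\beta)$. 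Substituting back collapses $\varphi(s_*)$ to $-\big(\beta\log(\beta/p)+(1-\beta)\log((1-\beta)/(1-p))\big)$, which is exactly the first asserted inequality; at the endpoint $\beta=0$ (with $0\log 0=0$) this reads $(1-p)^n$, in fact an equality.

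For the second inequality I would establish the quadratic lower bound for the relative entropy $D(\beta):=\beta\log(\beta/p)+(1-\beta)\log((1-\beta)/(1-p))$. Taylor's theorem in the variable $\beta$ is the cleanest route: $D(p)=0$, $D'(p)=0$, and $D''(\beta)=1/(\beta(1-\beta))$, so
\[
D(\beta)=\frac{(p-\beta)^2}{2\,\xi(1-\xi)}\qquad\text{for some }\xi\text{ strictly between }\beta\text{ and }p .
\]
It then remains to bound $\xi(1-\xi)$ from above on the segment joining $\beta$ to $p$. In the regime that is actually used — where $p\le\tfrac12$ and $\beta\le p$ — one has $\xi\le p\le\tfrac12$, hence $\xi(1-\xi)\le p(1-p)$ and in fact $D(\beta)\ge (p-\beta)^2/(2p(1-p))$, stronger than required; the factor $3$ in the statement simply supplies uniform slack in the remaining configurations. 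The case $\beta\ge p$ reduces to the one just treated by complementation: the number of \emph{failures} is $\Bin(n,1-p)$, so $\{\Bin(n,p)\ge\beta n\}=\{\Bin(n,1-p)\le(1-\beta)n\}$ with $1-\beta\le 1-p$, and both $D$ and the quadratic $(p-\beta)^2/(3p(1-p))$ are invariant under $(p,\beta)\mapsto(1-p,1-\beta)$, so the two bounds transfer verbatim.

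This is a classical estimate, so I do not anticipate a deep obstacle: the exponential-moment step is mechanical once one remembers to optimise over $s$ and to check the convexity of $\varphi$ (which follows from $\varphi''(s)=p\er^{-s}(1-p)/(1-p+p\er^{-s})^2>0$). The only point needing a little care is the Taylor-remainder step for the second inequality — one must keep $\xi(1-\xi)$ controlled uniformly across the admissible range of $\beta$ so that a single absolute constant works throughout — and this is exactly what the deliberately loose factor $3$ (in place of the natural $2$) is there to absorb.
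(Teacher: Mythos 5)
Your exponential-moment derivation of the first (relative-entropy) inequality is correct and complete, including the check that the optimal $s$ is nonnegative exactly because $\beta\le p$ and the endpoint $\beta=0$. Note that the paper offers no proof of this lemma at all: it cites Ash's Lemma 4.7.2, which is precisely that entropy bound, so for this part your argument is the standard one and is fine.

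The genuine gap is in the passage to the quadratic exponent. Your Taylor step establishes $D(\beta)\ge (p-\beta)^2/\big(2p(1-p)\big)$ only when the segment from $\beta$ to $p$ lies in $[0,\tfrac12]$, and the claim that the factor $3$ ``supplies uniform slack in the remaining configurations'' is not merely unproven but false: for $p=\tfrac{9}{10}$, $\beta=0$ one has $\PR\big(\Bin(n,p)\le 0\big)=(1-p)^n=\er^{-n\log 10}$, while $\exp\{-n(p-\beta)^2/(3p(1-p))\}=\er^{-3n}$ and $\log 10<3$; equivalently, after complementation the stated upper-tail bound fails at $p=\tfrac{1}{10}$, $\beta=1$. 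So no absolute constant rescues the full stated range (the lemma as printed is over-broad); the quadratic form genuinely needs a hypothesis such as $\beta\le p\le\tfrac12$, or a Bernstein-type denominator. Relatedly, your complementation remark transfers the entropy bound verbatim, but not your proof of the quadratic bound: in the paper's only application (Theorem \ref{conditional-tails}) the upper tail is used with $p\le\tfrac12$, which after complementation has parameter $1-p\ge\tfrac12$, outside the regime your Taylor argument covers. There the constraint $\psi\le\sqrt{p(1-p)(k-1)}$ forces $\beta\le 2p$, whence $p\le\xi\le 2p$ gives $\xi(1-\xi)\le 2p(1-p)$ and hence the quadratic bound with $4$ in place of $3$, which suffices for that proof. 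So either restrict the statement to the regime actually used, or supply this extra step explicitly; the blanket appeal to the constant $3$ cannot be repaired.
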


\begin{proof}[Proof of Theorem \ref{conditional-tails}]
Apply
Theorem \ref{cycles_sets} with two sets: $I$ and $[n]\setminus I$.
Here $\eps=0$.  Divide the right side in Theorem \ref{cycles_sets} by $\PR(C(\sigma)=k)$, where a lower bound
is given in Theorem \ref{allsets-local}.  Set $p=H(I)/H_n$. 
Then, for $0\le h\le k$,  
\begin{align*}
\PR\Big(C_I(\sigma)=h | C(\sigma)=k \Big) &= \frac{\PR(C_I(\sigma)=h \, \land \, 
C_{[n]\setminus I}(\sigma)=k-h)}{\PR(C(\sigma)=k)}\\
&\ll_A \PR\big( \Bin(k-1,p)=h-1\big)+ \PR\big( \Bin(k-1,p)=h \big),
\end{align*}
Set $\beta^- = p - \psi\sqrt{p(1-p)/(k-1)}$.
By Lemma \ref{binomial_tails},
\[
\PR\Big(C_I(\sigma)\le \beta^{-}(k-1)\;\; \big|\;\; C(\sigma)=k \big) 
\ll_A \PR \Big( \Bin(k-1,p) \le \beta^-(k-1) \Big) \ll_A \er^{-\frac13 \psi^2}.
\]
Let $\beta^+ = p + \psi\sqrt{p(1-p)/(k-1)} - \frac{1}{k-1}$.
Since $0 \le \psi \le \sqrt{p(1-p)(k-1)}$,
\begin{align*}
\PR\Big(C_I(\sigma)\ge \beta^+(k-1)+1\;\; \big|\;\; C(\sigma)=k \Big) &\ll_A
\PR \Big( \Bin(k-1,p) \ge \beta^+ (k-1) \Big)\\
&\ll_A \exp\bigg\{  - \frac{(\psi\sqrt{p(1-p)}-1/\sqrt{k-1})^2}{3p(1-p)} \bigg\}\\
&\ll_A \er^{-\frac13 \psi^2}.
\end{align*}
This completes the proof.
\end{proof}

%
\section{Permutations without small cycles}\label{sec:nosmall}
%

\begin{proof}[Proof of Theorem \ref{nosmallcycles2}]
Our proof is based on the Brun-Hooley sieve \cite{BrunHooley}
from number theory.
Let $K \ge \er^{10}$ be fixed and sufficiently large and let $u=n/m$.  If $u \le K$, then
Corollary \ref{cycles-single-set} implies that $\PR(C_{[m]}(\sigma)=0)\ll 1/m$ and 
the conclusion follows.  Now assume that $u > K$ and let 
\[
D = \log u,
\]
so that $D\ge 10$.
Partition $[m]$ into intervals $I_j=[z_j,z_{j-1})\cap \NN$, where $z_j=m/D^j$,
$0\le j\le \cl{\frac{\log m}{\log D}}=t$. 
Let $k_1,\ldots,k_t$ be positive, even integers, subject to
\be\label{kj-cond}
k_j \ge 10\log D \;\; (1\le j\le t), \qquad \sum_{j=1}^t \frac{(k_j+1) m}{D^{j-1}} \le n.
\ee
With $\sigma$ fixed, let
\[
x_j = \one(C_{I_j}(\sigma)=0), \quad y_j = \sum_{r=0}^{k_j} (-1)^r \binom{C_{I_j}(\sigma)}{r}.
\]
By Lemma  \ref{incl-excl}, we have
\[
0\le y_j - x_j = \binom{C_{I_j}(\sigma)-1}{k_j} \le \binom{C_{I_j}(\sigma)}{k_j+1}.
\]
Using the elementary inequality
 \[
x_1\cdots x_t \ge y_1\cdots y_t - \sum_{\ell=1}^t (y_\ell-x_\ell) \sprod{j=1 \\ j\ne \ell}^t y_j,  
 \]
 together with $\PR(C_{[m]}(\sigma)=0) =  \E x_1\cdots x_t$, we thus obtain
\be\label{small-ME}
M - E \le \PR(C_{[m]}(\sigma)=0) \le M,
\ee
where
\[
M=\E y_1\cdots y_t, \qquad E = \E \sum_{\ell=1}^t \binom{C_{I_\ell}(\sigma)}{k_\ell+1} \prod_{j\ne \ell} y_j.
\]
The condition \eqref{kj-cond} implies that 
\be\label{kjmaxIj-2}
\sum_j (k_j+1) \max I_j \le n.
\ee
  Thus, by
Theorem \ref{cycles_sets_expectation},
\begin{align*}
M &= \ssum{r_1,\ldots,r_t \\ 0\le r_j\le k_j (1\le j\le t)} 
(-1)^{r_1+\cdots+r_t} \E \binom{C_{I_1}(\sigma)}{r_1} \cdots \binom{C_{I_t}(\sigma)}{r_t} \\
&=  \ssum{r_1,\ldots,r_t \\ 0\le r_j\le k_j (1\le j\le t)} 
(-1)^{r_1+\cdots+r_t} \prod_{j=1}^t \frac{H(I_j)}{r_j!} 
= \prod_{j=1}^t \Bigg( \sum_{r_j=0}^{k_j} \frac{(-H(I_j))^{r_j}}{r_j!}    \Bigg).
\end{align*}
Since $H(I_j)=\log D+O(1)$ for every $j$, 
and recalling \eqref{kj-cond}, we have 
\begin{align}
\sum_{r_j=0}^{k_j} \frac{(-H(I_j))^{r_j}}{r_j!} &= \er^{-H(I_j)} +
 O\pfrac{H(I_j)^{k_j+1}}{(k_j+1)!} \notag \\
&=\er^{-H(I_j)} \bigg( 1 + O \pfrac{D(\log D+O(1))^{k_j+1}}{(k_j+1)!}  \bigg) \notag
\\ &=\er^{-H(I_j)} \exp \Bigg[ O \pfrac{D(\log D+O(1))^{k_j+1}}{(k_j+1)!}  \Bigg].
\label{nosmall-yj}
\end{align}
Hence, the main term satisfies
\be\label{small-M}
M = \er^{-H_m} \exp\left[ O\(\sum_{j=1}^t \frac{D(\log D+O(1))^{k_j+1}}{(k_j+1)!}\) \right].
\ee
Similarly, using \eqref{kjmaxIj-2},  the error term satisfies
\begin{align*}
E &=  \sum_{\ell=1}^t \ssum{r_j (j\ne \ell) \\ 0\le r_j\le k_j (j\ne \ell))} 
(-1)^{\sum_{j\ne \ell} r_j} \E \binom{C_{I_\ell}(\sigma)}{k_\ell+1} 
\prod_{j\ne \ell} \binom{C_{I_j}(\sigma)}{r_j} \\
&=  \sum_{\ell=1}^t
 \frac{H(I_\ell)^{k_\ell+1}}{(k_\ell+1)!} \prod_{j\ne \ell}
\Bigg( \sum_{r_j=0}^{k_j} \frac{(-H(I_j))^{r_j}}{r_j!}  \Bigg) .
\end{align*}
Hence, by \eqref{nosmall-yj},
\be\label{small-E}
E=\sum_{\ell=1}^t \frac{\er^{H(I_\ell)}(\log D+O(1))^{k_\ell+1}}
{(k_\ell+1)!} \er^{-H_m} \exp\left[ O\(\sum_{j=1}^t \frac{D(\log D+O(1))^{k_j+1}}{(k_j+1)!}\) \right].
\ee
We now take
\[
k_j=k_1+2(j-1) \quad (j\ge 1), \;\; k_1=2\fl{\frac{D-1}{D} \cdot \frac{u}{2}}-6,
\]
and readily verify that the conditions \eqref{kj-cond} hold if
$K$ is large enough.
Thus, by Stirling's formula,
\begin{align*}
\sum_{j=1}^t \frac{D(\log D+O(1))^{k_j+1}}{(k_j+1)!} 
&\ll \frac{D(\log D+O(1))^{k_1+1}}{(k_1+1)!} \\
&\le \er^{-u\log u + u\log\log\log u + O(u)}
\end{align*}
and likewise
\[
\sum_{\ell=1}^t \frac{\er^{H(I_\ell)}(\log D+O(1))^{k_\ell+1}}
{(k_\ell+1)!} \le  \er^{-u\log u + u\log\log\log u + O(u)}.
\]
Inserting these last two bounds into 
\eqref{small-M} and \eqref{small-E}, and recalling \eqref{small-ME},
the proof is complete.
\end{proof}


\section{Permutations without large cycles}\label{sec:nolarge}

The traditional approach to the problem of
estimating the probability that a random permutation
has no cycle of size $>m$ is via
generating functions, e.g. Theorem \ref{genfcn}.
The sharpest results depend on a lengthy complex-analytic
argument, see \cite{ManPet16, Pet16}.

\begin{proof}[Proof of Theorem \ref{nolargecycles}]
Let  $w\ge 1$.  If $\sigma$ has no cycles of length $>m$, then
$\sum_{j=1}^m j C_j(\sigma)=n$ and hence
\[
\nu(n,m) \le \E 
w^{C_1(\sigma)+2C_2(\sigma)+\cdots+mC_m(\sigma)-n}.
\]
For $1\le j\le m$, write $w^j = 1 + (w^j-1)$.  By the binomial theorem
and Lemma \ref{cycles},
\begin{align*}
\nu(n,m) &\le w^{-n} \E 
\prod_{j=1}^m \Bigg( \sum_{k_j=0}^\infty (w^j-1)^{k_j} \binom{C_j(\sigma)}{k_j} \Bigg)\\
&= w^{-n} \sum_{k_1,\ldots,k_m\ge 0} (w-1)^{k_1}\cdots (w^m-1)^{k_m}
\E \binom{C_1(\sigma)}{k_1}\cdots \binom{C_m(\sigma)}{k_m}\\
&\le  w^{-n} \sum_{k_1,\ldots,k_m\ge 0} (w-1)^{k_1}\cdots (w^m-1)^{k_m} \prod_{j=1}^m
\frac{(1/j)^{k_j}}{k_j!}\\
&= w^{-n} \exp  \Bigg\{ \frac{w-1}{1} + \frac{w^2-1}{2}+\cdots
\frac{w^m-1}{m} \Bigg\}.
\end{align*}
A good all-purpose choice is $w=u^{1/m}$, where $u=n/m$.
The mean value theorem implies that 
\[
w^j=u^{j/m} \le 1 + (u-1)j/m \qquad  (1\le j\le m)
\]
 and hence
\be\label{powers-w}
 w-1 + \frac{w^2-1}{2} + \cdots + \frac{w^m-1}{m} \le \sum_{j=1}^m \frac{(u-1)j/m}{j}=u-1.
\ee
We conclude that
\[
\nu(n,m) \le u^{-n/m} \er^{u-1} =
\er^{-u\log u + u - 1}. \qedhere
\]
\end{proof}

For the proof of Theorem \ref{nolargecycles-dickman}, we need only very basic facts about
the Dickman function $\rho(u)$, namely that it is positive and decreasing.
These facts follow quickly from the definition plus the relation
\be\label{rho-int}
v \rho(v) = \int_{v-1}^v \rho(u)\, du \qquad (v\ge 1)
\ee
obtained by integrating \eqref{rho-recur} from $u=1$ to $u=v$.

\begin{proof}[Proof of Theorem \ref{nolargecycles-dickman}]
When $m\le n\le 2m$, the desired bounds \eqref{nu-bounds} follow from
\eqref{nu-smalln}, the fact that $\rho(u)=1-\log u$ for $1\le u\le 2$ and the easy inequalities
\[
 \log\pfrac{n+1}{m+1} = \int_{m+1}^{n+1}\frac{dt}{t} \le 
H_n - H_m \le \int_{n}^m \frac{dt}{t} = \log\pfrac{n}{m}.
\]
For larger $n$, we fix $m$ and argue by induction.  
For $1\le \ell\le m$, there are $\binom{n}{\ell}(\ell-1)!$ ways to form an 
$\ell-$cycle from $[n]$.  Hence
\begin{align*}
\nu(n,m) = \frac{1}{n!} \!\!\ssum{\sigma \in \cS_n \\ C_{(m,n]}(\sigma)=0}\frac{1}{n} \ssum{\tau|\sigma \\ \tau\text{ a cycle}} |\tau| &= \frac{1}{n\cdot n!} \sum_{\ell=1}^m \ell \binom{n}{\ell}(\ell-
1)! (n-\ell)! \nu(n-\ell,m)\\ &= \frac{1}{n} \sum_{k=n-m}^{n-1} \nu(k,m).
\end{align*}
Now fix $m\ge 1$, let $N\ge 2m+1$ and assume that \eqref{nu-bounds}
holds when $m\le n\le N-1$. 
Using \eqref{rho-int} and the monotonicity of $\rho$,
\begin{align*}
\nu(N,m) = \frac{1}{N} \sum_{k=N-m}^{N-1} \nu(k,m) & \ge \frac{1}{N} \sum_{k=N-m}^{N-1}  
\rho(k/m) > \frac{1}{N} \sum_{k=N-m}^{N-1} \int_{k}^{k+1} \rho(t/m)\, dt\\
&= \frac{1}{N} \int_{N-m}^N \rho(v/m)\, dv = \frac{1}{N/m}\int_{N/m-1}^{N/m}\rho(v)\, dv = \rho(N/m)
\end{align*}
and
\begin{align*}
\nu(N,m) \le \frac{1}{N} \sum_{k=N-m}^{N-1} \rho\pfrac{k+1}{m+1}
&\le \frac{1}{N} \sum_{k=N-m}^{N-1} \int_{k-1}^{k} \rho\pfrac{t+1}{m+1}\, dt \\
&= \frac{m+1}{N} \int_{\frac{N-m}{m+1}}^{\frac{N}{m+1}}\, \rho(v)\, dv\\
&= \frac{m+1}{N} \int_{\frac{N-m}{m+1}}^{\frac{N+1}{m+1}}\, \rho(v)\, dv
-  \frac{m+1}{N} \int_{\frac{N}{m+1}}^{\frac{N+1}{m+1}}\, \rho(v)\, dv\\
&= \frac{N+1}{N} \rho\pfrac{N+1}{m+1} - \frac{m+1}{N} \int_{\frac{N}{m+1}}^{\frac{N+1}{m+1}}\, \rho(v)\, dv.
\end{align*}
The final integral on the right side is $\ge \frac{1}{m+1} \rho\pfrac{N+1}{m+1}$
and thus $\nu(N,m) \le \rho\pfrac{N+1}{m+1}$.
The claimed bounds \eqref{nu-bounds} now follow by induction on $n$. 
\end{proof}


\section{Poisson approximation of small cycle lengths}\label{sec:Kubilius}


In this section, we prove Theorem \ref{Poisson_smallcycles}, which
 shows that $C_j(\sigma)$ is approximately Poisson with parameter
 $1/j$, uniformly for small $j$.

We begin by relating $d_{TV}(\cC_k, \cZ_k)$ to $\PR(C_{[m]}(\sigma)=0)$ using a
variant of a special case of 
\cite[eq. (33)]{AT94}.  Define $U(n,m) = \PR_n(C_{[m]}(\sigma)=0)$ for
$n\ge 0$ and $U(n,m)=0$ for $n<0$.

\begin{lem}\label{dTVCkZk}
We have
\[
d_{TV}(\cC_k, \cZ_k) = \sum_{\hh\in \NN_0^k} \; \prod_{j=1}^k \frac{(1/j)^{h_j}}{h_j!} \max \Big( 0, \er^{-H_k} -  U(n',k) \Big), 
\]
where $n'=n'(\hh) = n-\sum_{j=1}^k jh_j$.
\end{lem}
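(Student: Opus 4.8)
The plan is to compute both distributions explicitly on the space $\NN_0^k$ and exploit the fact that the total variational distance between two measures on a discrete space equals $\sum_{\omega} \max(0, \PR(\cC_k = \omega) - \PR(\cZ_k = \omega))$, the sum being over all atoms $\omega$. So the first step is to record that for $\hh = (h_1,\ldots,h_k) \in \NN_0^k$,
\[
\PR(\cZ_k = \hh) = \prod_{j=1}^k \er^{-1/j} \frac{(1/j)^{h_j}}{h_j!} = \er^{-H_k} \prod_{j=1}^k \frac{(1/j)^{h_j}}{h_j!},
\]
since the $Z_j$ are independent Poisson with parameters $1/j$ and $\sum_{j=1}^k 1/j = H_k$.

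The second step is to evaluate $\PR(\cC_k = \hh) = \PR_n\big(C_j(\sigma) = h_j\ (1\le j\le k)\big)$. Here I would use Cauchy's formula (Theorem \ref{thm:Cauchy}), or equivalently Lemma \ref{cycles}: conditioning on the small cycles having exactly type $(h_1,\ldots,h_k)$ and then counting the ways to build them leaves a permutation of the remaining $n' = n - \sum_{j=1}^k j h_j$ points which must have no cycle of length $\le k$. Carefully, one finds
\[
\PR_n\big(C_j(\sigma) = h_j\ (1\le j\le k)\big) = \prod_{j=1}^k \frac{(1/j)^{h_j}}{h_j!} \cdot U(n', k),
\]
with the convention $U(n',k) = 0$ when $n' < 0$ (i.e.\ when $\sum j h_j > n$), which matches the definition in the statement. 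This identity is the combinatorial heart and is where I expect the only real bookkeeping: one must check the factor $(n')!/n!$ coming from Cauchy's formula cancels correctly against the count of ordered cycle-constructions on the small points, and that when $n' = 0$ the empty permutation contributes $U(0,k) = 1$.

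The third step is purely formal: subtract the two expressions atom-by-atom. Since the common factor $\prod_{j=1}^k (1/j)^{h_j}/h_j!$ is nonnegative, we get
\[
\max\big(0, \PR(\cC_k=\hh) - \PR(\cZ_k=\hh)\big) = \prod_{j=1}^k \frac{(1/j)^{h_j}}{h_j!}\max\big(0, \er^{-H_k} - U(n',k)\big),
\]
wait --- one must be slightly careful about the direction: $d_{TV}(X,Y) = \sup_U \PR(X\in U) - \PR(Y \in U) = \sum_\omega \max(0, \PR(X = \omega) - \PR(Y=\omega))$, so with $X = \cC_k$, $Y = \cZ_k$ the relevant quantity is $\max(0, \PR(\cC_k=\hh) - \PR(\cZ_k=\hh))$, which pulls out the nonnegative product to leave $\max(0, U(n',k) - \er^{-H_k})$. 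Since the stated lemma has $\max(0, \er^{-H_k} - U(n',k))$, I would instead take $X = \cZ_k$, $Y = \cC_k$ in the defining formula \eqref{dTV} (using that $d_{TV}$ as defined there via a supremum over $U$ gives the same value regardless of which variable is called which, as the complement of $U$ swaps the roles); summing over all $\hh \in \NN_0^k$ then yields exactly the claimed formula. The main obstacle is thus not difficulty but vigilance: getting the combinatorial identity in Step 2 exactly right (including the $n' < 0$ convention) and pinning down the orientation of the $\max$ in Step 3.
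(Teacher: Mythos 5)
Your proposal is correct and follows essentially the same route as the paper: compute $\PR(\cZ_k=\hh)$ from independence of the Poissons, compute $\PR(\cC_k=\hh)=U(n',k)\prod_j (1/j)^{h_j}/h_j!$ by splitting $\sigma$ into its cycles of length $\le k$ and $>k$ and applying Cauchy's formula to the small part (with the $n'<0$ convention), then sum the atomwise positive parts; your remark that the orientation of the $\max$ is immaterial by the symmetry of \eqref{dTV} under complementation is the same observation implicit in the paper's "easy identity."
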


\begin{proof}
We begin with the easy identity
\[
d_{TV}(\cC_k, \cZ_k) = \sum_{\hh\in \NN_0^k} \max \Big( 0, \PR(\cZ_k = \hh) - \PR(\cC_k=\hh)  \Big). 
\]
Clearly,
\[
\PR(\cZ_k=\hh) = \er^{-H_k} \prod_{j=1}^k  \frac{(1/j)^{h_j}}{h_j!}.
\]
Now fix $\hh$, write $g = h_1+2h_2+\cdots+kh_k$ and consider $\PR(\cC_k=\hh)$.
If $g>n$, then $\PR(\cC_k=\hh)=0$.  Now suppose that $g\le n$.
Write $\sigma= \sigma_1 \sigma_2$, where $\sigma_1$ is the product of the cycles of length
at most $k$ and permutes a subset $I$ of $[n]$ of size 
$g$, and $\sigma_2$ is the product of the cycles of length greater than $k$ and permutes $[n]\setminus I$ of size $n'=n-g$.  By Cauchy's formula (Theorem \ref{thm:Cauchy}), applied to $\sigma_1$, it follows that
\[
\PR(\cC_k=\hh) = U(n',k) \prod_{j=1}^k  \frac{(1/j)^{h_j}}{h_j!},
\]
and the lemma follows.
\end{proof}

\begin{proof}[Proof of Theorem \ref{Poisson_smallcycles}]
We may assume that $k\le n/100$.  We will use Lemma \ref{dTVCkZk}
and estimate the contribution to $d_{TV}(\cC_k,\cZ_k)$
from the tuples
 $\hh=(h_1,\cdots,h_k)\in \NN_0^k$.  The main idea of the proof is to separately consider those
vectors which constitute rare events (many $h_j$ large): specifically, let
\begin{align*}
\mathcal{H}_1&=\{ \hh\in \NN_0^k: h_1+2h_2+\cdots+kh_k \le n-50k \}, \\
\mathcal{H}_2&=\{ \hh\in \NN_0^k: h_1+2h_2+\cdots+kh_k > n-50k \}.
\end{align*}
First, consider  $\hh\in \mathcal{H}_1$ and let $n'=n-(h_1+2h_2+\cdots+kh_k)\ge 50k$.
By Theorem \ref{nosmallcycles2},
\[
U(n',k) = \er^{-H_k} \(1 + O(\er^{-g(n'/k)})\),
\]
where $g(x)=-x\log x + x\log\log\log x+O(x)$ when $x\ge 50$. It follows that
\[
\sum_{\hh\in\mathcal{H}_1} \prod_{j=1}^k  \frac{(1/j)^{h_j}}{h_j!} \Big| \er^{-H_k} - U(n',k)\Big| \ll \er^{-H_k} \ssum{\hh\in \cH_1}  \er^{-g(n'/k)} \prod_{j=1}^k \frac{(1/j)^{h_j}}{h_j!}.
\]
For $\hh \in \mathcal{H}_2$, we use a trivial bound
\[
\max \Big( 0, \er^{-H_k} - U(n',k) \Big) \le \er^{-H_k} \le 1/k.
\]
We conclude that
\be\label{sumrhh}
\sum_{\hh\in\NN_0^k} \; \prod_{j=1}^k  \frac{(1/j)^{h_j}}{h_j!} \max \Big( 0, \er^{-H_k} - U(n',k)\Big) \ll \frac{1}{k}\sum_{50\le r\le n/k+1}\er^{-g(r)}
\ssum{\hh\in \NN_0^k \\ n' < rk} \;
\prod_{j=1}^k \frac{(1/j)^{h_j}}{h_j!}.
\ee
As in the proof of Theorem \ref{nolargecycles},
we invoke the method of parameters, also known as the tilting method
(this is commonly used in Chernoff inequalities; see Section 0.5 in \cite{Divisors} for number theoretic applications).
For any real number $w\ge 1$ we have
\begin{align*}
\ssum{\hh\in \NN_0^k \\ n'<rk} \; \prod_{j=1}^k \frac{(1/j)^{h_j}}{h_j!} &\le \sum_{\hh\in\NN_0^k}
w^{h_1+2h_2+\cdots+kh_k-n+rk} \prod_{j=1}^k \frac{(1/j)^{h_j}}{h_j!} 
\\ &= w^{-n+rk} \exp\left\{ w+ \frac12 w^2+ \cdots + \frac{1}{k} w^k\right\}.
\end{align*}
Take $w=(u-r+2)^{1/k}$ where $u=\frac{n}{k}$.  By the argument in \eqref{powers-w}, 
\[
 w+ \frac12 w^2+ \cdots + \frac{1}{k} w^k \le H_k+u-r+1
 \le \log k + u-r+2.
\]
It follows that
\[
\ssum{\hh\in \NN_0^k \\ n'<rk} \; \prod_{j=1}^k \frac{(1/j)^{h_j}}{h_j!}
\le k \exp \big\{ -(u-r)\log(u-r+2)+(u-r+2)\big\}.
\]
Inserting this into \eqref{sumrhh}, we find that
\begin{align*}
d_{TV}(\cC_k,\cZ_k) &\ll \er^{u\log\log\log u+O(u)} \sum_{50\le r\le u+1} \er^{-r\log r -(u-r)\log(u-r+2)}\\
&\ll \er^{u\log\log\log u+O(u)} \sum_{50\le r\le u+1} \frac{1}{r! (u+2-r)!}\\
&\ll \er^{-u\log u + u\log\log\log u+O(u)}.
\qedhere
\end{align*}
\end{proof}


\section{Central Limit Theorems}\label{sec:CLT}

A principal tool is the fact that, as $\lambda\to\infty$, 
the Poisson random variable with parameter $\lambda$ approaches a Gaussian distribution
with mean $\lambda$ and variance $\lambda$.  The following is a special case of the Central Limit Theorem with Berry-Esseen type rate of convergence.
For completeness, we give a short proof in the Appendix using only
 Stirling's formula and Euler summation.

\begin{lem}[Poisson CLT]\label{Poisson_CLT}
Let $\lambda \ge 1$, and let $X$ be Poisson with parameter $\lambda$.
Uniformly for real $\lambda\ge 1$ and real $z$, we have
\[
\PR \( X \le \lam + z\sqrt{\lam} \) = \Phi(z) + O\( \lambda^{-1/2} \), \qquad \Phi(z)=\frac{1}{\sqrt{2\pi}} \int_{-\infty}^z \er^{-\frac12 t^2}\, dt.
\]
\end{lem}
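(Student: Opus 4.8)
The plan is as follows. We may assume $\lambda$ is large, since for bounded $\lambda$ both $\PR(X\le\lambda+z\sqrt\lambda)$ and $\Phi(z)$ lie in $[0,1]$ and the error term $O(\lambda^{-1/2})$ absorbs their difference. Split the range of $z$ at $\pm 2\sqrt{\log\lambda}$. In the tail range $|z|\ge 2\sqrt{\log\lambda}$, I would combine Lemma~\ref{Poisson_tails} with the bound $Q(1\pm x)\ge x^2/3$ from \eqref{Qx_crude}: for $z\ge 2\sqrt{\log\lambda}$ (treating the subcase $z>\sqrt\lambda$ via $\PR(X\ge 2\lambda)\le\er^{-Q(2)\lambda}$, with $Q(2)>0$) one gets $\PR(X>\lambda+z\sqrt\lambda)\le\er^{-Q(1+z/\sqrt\lambda)\lambda}\ll\lambda^{-1}$, hence $\PR(X\le\lambda+z\sqrt\lambda)=1+O(\lambda^{-1})$, while $1-\Phi(z)\ll\er^{-z^2/2}\le\lambda^{-2}$; symmetrically, when $z\le-2\sqrt{\log\lambda}$ both $\PR(X\le\lambda+z\sqrt\lambda)$ and $\Phi(z)$ are $O(\lambda^{-1})$. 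So the tail range contributes an error well within $O(\lambda^{-1/2})$.

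Now fix $z$ with $|z|<2\sqrt{\log\lambda}$ and set $m=\fl{\lambda+z\sqrt\lambda}$ and $p_k=\er^{-\lambda}\lambda^k/k!$. By Lemma~\ref{Poisson_tails} again, the mass with $k<\lambda-2\sqrt{\lambda\log\lambda}$ is $O(\lambda^{-1})$, so it suffices to evaluate $S:=\sum_{\lambda-2\sqrt{\lambda\log\lambda}\le k\le m}p_k$. For each such $k$, write $t=(k-\lambda)/\sqrt\lambda$, so $|t|\le 2\sqrt{\log\lambda}$ and $k\asymp\lambda$; inserting Stirling's formula (Lemma~\ref{Stirling}) into $\log p_k=-\lambda+k\log\lambda-\log k!$ and using the Taylor expansion of $\log(1+t/\sqrt\lambda)$ gives
\[
p_k=\frac{1}{\sqrt{2\pi\lambda}}\,\er^{-t^2/2}\Bigl(1+O\bigl((|t|^3+1)/\sqrt\lambda\bigr)\Bigr).
\]
The sum of the error part is $\ll\frac{1}{\sqrt{2\pi\lambda}}\cdot\frac{1}{\sqrt\lambda}\sum_k\er^{-t_k^2/2}(|t_k|^3+1)\ll\frac{1}{\lambda}\cdot\sqrt\lambda=\lambda^{-1/2}$, since the $t_k$ are spaced $1/\sqrt\lambda$ apart and $\int_{-\infty}^{\infty}\er^{-t^2/2}(|t|^3+1)\,dt$ is finite. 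For the main part, apply Euler summation to $f(x)=\er^{-(x-\lambda)^2/(2\lambda)}$, using $\|f\|_\infty=1$ and $\int_{-\infty}^{\infty}|f'|=2$ to obtain $\sum_{a\le k\le m}f(k)=\int_a^m f(x)\,dx+O(1)$; substituting $x=\lambda+s\sqrt\lambda$, extending the lower limit to $-\infty$ at cost $O(\lambda^{-1})$, and noting the upper limit is $z+O(\lambda^{-1/2})$, we get $S=\Phi(z)+O(\lambda^{-1/2})$ after dividing by $\sqrt{2\pi\lambda}$ and using that $\Phi$ is Lipschitz. Collecting all contributions proves the lemma.

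The only real work here is bookkeeping. One must keep the Stirling/Taylor error as an explicit function of $t$ (not merely $o(1)$), so that after weighting by the Gaussian and summing it still yields $O(\lambda^{-1/2})$; and the cutoffs $2\sqrt{\log\lambda}$ (in $z$) and $2\sqrt{\lambda\log\lambda}$ (in $|k-\lambda|$) must be chosen large enough that the Poisson and Gaussian tails are $O(\lambda^{-1})$, yet small enough that $|t|^3/\sqrt\lambda=o(1)$ on the retained range. No contour integration or special-function asymptotics is needed, only Stirling's formula and one application of Euler summation, consistent with the claim in the text.
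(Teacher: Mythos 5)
Your proposal is correct and follows essentially the same route as the paper's appendix proof: truncate both tails via Lemma \ref{Poisson_tails} and \eqref{Qx_crude}, approximate each Poisson weight by a Gaussian term with an explicit $(|t|^3+1)/\sqrt{\lambda}$-type error using Stirling's formula, and convert the central sum to $\Phi(z)$ by Euler summation. The only differences are cosmetic (cutoff $2\sqrt{\log\lambda}$ versus $3\sqrt{\log(1+\lambda)}$, and the error written in $t$- rather than $u$-units), so nothing further is needed.
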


\begin{proof}[Proof of Theorem \ref{CLT_cycles}]
Let $H=H(I)$.  We may assume that $H\ge 100$, the assertion being trivial otherwise.
If $|w|\ge \sqrt{3\log H}$ then the result follows
from Theorem \ref{single-set-tails}, since
the left side is thus $O(1/H)=\Phi(w)+O(1/H)$ if $w\le -\sqrt{3\log H}$ and is $1-O(1/H)=\Phi(w)+O(1/H)$ if $w\ge \sqrt{3\log H}$.
Suppose now that $|w| < \sqrt{3\log H}$, let
\[
A = H + w \sqrt{H}, \qquad m=\cl{\frac{n}{\log H}}, \qquad J = I \cap [m].
\]
Because 
\[
H(I\setminus J) =\ssum{m<k\le n \\ k\in I} \frac{1}{k} \le H((m,n]\cap \NN) \le \log\log H+O(1)
\]
we have $H(J) = H + O(\log\log H)$.  Thus,
\[
A =  H(J) + w'\sqrt{H(J)}, \quad w'=w+O\pfrac{\log\log H}{\sqrt{H}}.
\]
Let $Y$ be a Poisson random variable with parameter $H(J)$.
Thus, by Theorem \ref{poisson_cycles} and Lemma \ref{Poisson_CLT},
\begin{align*}
\PR ( C_I(\sigma) \le A ) &\le \PR ( C_J (\sigma) \le A) \\
&= \PR(Y\le A) + O(\er^{-n/m}) \\
&=\Phi(w') + O\( H(J)^{-1/2} + \er^{-n/m} \) \\
&=\Phi(w') + O\pfrac{1}{\sqrt{H}} \\
&= \Phi(w) +O\pfrac{\log\log H}{\sqrt{H}}.
\end{align*}

 We also  have
\[
A - \log H = H(J) + w''\sqrt{H(J)}, \quad w''=w+O\pfrac{\log H}{\sqrt{H}}
\]
and it follows 
 that
\begin{align*}
\PR ( C_I(\sigma) \le A ) &\ge \PR \( C_{J} (\sigma) \le A - \log H  \text{ and } C_{I\setminus J}(\sigma) \le \log H  \)\\
&=\PR \( C_{J} (\sigma) \le A - \log H  \),
\end{align*}
since $\min (I\setminus J) \ge n/\log H$ implies that $C_{I\setminus J}(\sigma)\le \log H$ always.  Hence, by Theorem \ref{poisson_cycles} and Lemma \ref{Poisson_CLT},
\begin{align*}
\PR ( C_I(\sigma) \le A )& \ge \Phi(w'') + O(1/\sqrt{H}) \\
&=\Phi(w) + O\pfrac{\log H}{\sqrt{H}}.
\end{align*}
The theorem follows by combining the upper and lower bounds
for $\PR(C_I(\sigma)\le A)$.
\end{proof}

\begin{proof}[Proof of Theorem \ref{jth-smallest-cycle}]
We may assume that $j\ge 10$ and that $n$ is sufficiently large, the statement being trivial otherwise.  We may also assume that $|w| \le \sqrt{\log j}$, since the
statement for $w$ outside this range follows from the
monotonicity of $\PR(\log D_j(\sigma) \le j+w\sqrt{j})$, as a function of $w$,
 the statement for the two points $w = \pm \sqrt{\log j}$ and the fact that 
 $\Phi(-\sqrt{\log j})\ll 1/j^{1/2}$ and $\Phi(\sqrt{\log j})=1-O(1/j^{1/2})$.

Let $k=\fl{\er^{j+w\sqrt{j}}}$, so by hypothesis,
\[
\log k \le j + \sqrt{j\log j} \le j + \sqrt{(\log n)\log\log n} \le \log n.
\]
  Then $D_j(\sigma)\le k$ is equivalent to $C_{[k]}(\sigma) \ge j$. 
 As $H_k=\log k + O(1)$ and $\sqrt{H_k}=\sqrt{j}+O(|w|+1)$, we have
\[
j - 1 = H_k - u\sqrt{H_k}, \quad\text{where}\quad u = w + O\pfrac{w^2+1}{\sqrt{j}}.
\]
 By Theorem \ref{CLT_cycles},
\begin{align*}
\PR(D_j(\sigma)\le k) &= \PR(C_{[k]}(\sigma) \ge j) = 1 - \PR(C_{[k]}(\sigma) \le j-1)\\
&= 1-\Phi(u)+O \( \frac{\log H_k}{\sqrt{H_k}} \)\\
&= \Phi(u) + O \pfrac{\log(2j)}{\sqrt{j}}.
\end{align*}
 Also,
\[
\Phi(u) = \Phi(w)+O\pfrac{w^2+1}{\sqrt{j}} = \Phi(w) + 
O\pfrac{\log (2j)}{\sqrt{j}}
\]
and the proof is complete.
\end{proof}


\section{Fixed sets  and divisors of permutations}

\begin{proof}[Proof of Theorem \ref{exp-numdiv}]
Evidently, $2^{C(\sigma)}$ equals the number of 
divisors $\beta|\sigma$.  The permutation $\beta$
fixes a set $I$.  Summing over $I$ we see that
\begin{align*}
\E 2^{C(\sigma)} &= \frac{1}{n!} \sum_{\sigma\in \cS_n} \sum_{\beta|\sigma}
1 = \frac{1}{n!} \sum_{I \subseteq [n]} \ssum{\sigma\in \cS_n \\ \sigma\text{ fixes
  }I} 1 \\
&= \frac{1}{n!}  \sum_{I \subseteq [n]} (n-|I|)! |I|! \\
&= \frac{1}{n!} \sum_{j=0}^n (n-j)!j!  \binom{n}{j} = \sum_{j=0}^n 1 = n+1. \qedhere
\end{align*}
\end{proof}

\begin{proof}[Proof of Theorem \ref{thm:fixed-set}]
The statement is trivial for $1\le k\le 100$, thus we may assume
that $k>100$.  Let $r_0 = \frac{H_k}{\log 2}$, so that
$r_0 = \frac{\log k}{\log 2} + O(1)$. 
 By Theorem \ref{cyclesinterval},
\[
\PR( C_{[k]}(\sigma) \ge r_0 ) \ll k^{-Q(1/\log 2)}  = k^{-\cE}.
\]
If $\sigma$ has a fixed set of size $k$, then $\sigma$ factors
as $\sigma = \a \b$, where $|\a| = k$ and $|\b|=n-k$.
Hence, if $C_{[k]}(\sigma)< r_0$, then for some
non-negative integers $j,h$ with $j+h<r_0$ we have
\be\label{alphabeta}
C(\alpha)=j, \qquad C_{[k]}(\b)=h.
\ee
With $j,h$ fixed the number of pairs $\a,\b$ with
\eqref{alphabeta} is at most
\[
\binom{n}{k} k! \PR_k(C(\alpha)=j) (n-k)! \; \PR_{n-k}(C_{[k]}(\b)=h)
\ll n! \frac{H_k^{j+h} \er^{-2H_k}}{j!h!},
\]
upon invoking Lemma \ref{cycles_sets}.
Summing first over all $j,h$ with $h+j=r$ using the binomial theorem, and then over $r < r_0$ we see that the
probability that $C_{[k]}(\sigma) < r_0$ and
 $\sigma$ factors as $\sigma=\a\b$ with
$|\a|=k$ is bounded above by
\[
\ll \er^{-2H_k} \sum_{r< r_0} \frac{(2H_k)^r}{r!} \ll
k^{-Q(\frac{1}{2\log 2})} = k^{-\cE},
\]
upon invoking Lemma \ref{Poisson_tails}.
\end{proof}

\appendix 
\section*{Appendix}

In this appendix, we proof Lemma \ref{Poisson_CLT} and \eqref{rho-local}.
\begin{proof}[Proof of Lemma \ref{Poisson_CLT}]
We give a short, direct proof using Stirling's formula and Euler summation.
Let $h^*=3\sqrt{\log(1+\lambda)}$.  We may assume that $\lambda$ is sufficiently large.
By Proposition \ref{Poisson_tails} and the crude bounds for $Q(x)$ given in \eqref{Qx_crude}, we have
\[
\PR(|X-\lam| > h^*\sqrt{\lambda}) \le 2\er^{-3\log (1+\lambda)} = \frac{2}{(1+\lambda)^3}.
\]
Likewise,
\be\label{Gauss_tail}
\int\limits_{|t|>h^*} \er^{-\frac12 t^2}\, dt \ll \frac{1}{(1+\lambda)^3}.
\ee
Consequently, we may assume that $|z| \le h^*$, and deduce 
\[
\PR \( X \le \lam + z\sqrt{\lam} \) = \er^{-\lam} \sum_{\lam-h^*\sqrt{\lam} \le k\le \lam+z\sqrt{\lam}} \frac{\lam^k}{k!} + O\pfrac{1}{\lam^3}.
\]
For $|k-\lam| \le h^*\sqrt{\lam}$, Stirling's formula implies that
\[
k! = \pfrac{k}{\er}^k \sqrt{2\pi \lam} \(1 + O\pfrac{|k-\lam|+1}{\lam} \).
\]
Write $k=\lam+u$.  Then, for $|u| \le h^*\sqrt{\lam}$, we have
\begin{align*}
\er^{-\lam} \frac{\lam^k}{k!} &=  \frac{1+O\pfrac{|u|+1}{\lam}}{\sqrt{2\pi\lam}} \er^{-\lam}
\pfrac{\er\lam}{\lam+u}^{\lam+u} = \frac{1+O\pfrac{|u|+1}{\lam}}{\sqrt{2\pi\lam}} \frac{\er^u}{(1+u/\lam)^{\lam+u}} \\
&=\frac{1+O\pfrac{|u|+1}{\lam}} {\sqrt{2\pi\lam}}
\exp\left\{ u-(\lam+u)\(\frac{u}{\lam}-\frac12 \pfrac{u}{\lam}^2
+O\(\pfrac{u}{\lam}^3\) \) \right\} \\
&=\(1+ O\(\frac{1+|u|}{\lam} + \frac{|u|^3}{\lam^2}\)\) 
\frac{\er^{-\frac{u^2}{2\lam}}}{\sqrt{2\pi \lam}}.
\end{align*}
It follows that
\[
\er^{-\lam} \sum_{\lam-h^*\sqrt{\lam} \le k\le \lam+z\sqrt{\lam}} \frac{\lam^k}{k!} = M+E,
\]
where
\[
M = \frac1{\sqrt{2\pi \lam}} \sum_{\lam-h^*\sqrt{\lam} \le k\le \lam+z\sqrt{\lam}} \er^{-\frac{(k-\lam)^2}{2\lam}}
\]
and 
\begin{align*}
E &\ll \frac{1}{\sqrt{\lam}} \sum_k \( \frac{1+|k-\lam|}{\lam} + \frac{|k-\lam|^3}{\lam^2} \) \er^{-\frac{|k-\lam|^2}{2\lam}} \\
&\ll \sum_{a=1}^\infty \( \frac{a+a^3}{\sqrt{\lam}} \)
\er^{-(a-1)^2/2} \ll \frac{1}{\sqrt{\lam}}. 
\end{align*}
By Euler summation, and writing $\{t\}=t-\fl{t}$, 
\[
M=\frac{1}{\sqrt{2\pi \lam}} \Bigg[ \int_{\lam-h^*\sqrt{\lam}}^{\lam+z\sqrt{\lam}} \er^{-\frac{(t-\lam)^2}{2\lam}}\, dt - \int_{\lam-h^*\sqrt{\lam}}^{\lam+z\sqrt{\lam}} \{t \} \pfrac{t-\lam}{\lam} \er^{-\frac{(t-\lam)^2}{2\lam}}\, dt +O(1) \Bigg]. 
\]
The integral involving $\{t\}$ is $O(1)$.
The first integral equals, by \eqref{Gauss_tail},
\[
\sqrt{\lam} \int_{-h^*}^z \er^{-\frac12 u^2}\, du = \sqrt{\lam} \int_{-\infty}^z \er^{-\frac12 u^2}\, du + O(\lam^{-5/2}),
\]
and hence
\[
M = \frac{1}{\sqrt{2\pi}} \int_{-\infty}^z \er^{-\frac12 u^2}\, du + O\pfrac{1}{\sqrt{\lam}} = \Phi(z) + O\pfrac{1}{\sqrt{\lam}}.
\qedhere
\]
\end{proof}

\begin{proof}[Proof of \eqref{rho-local}]
It suffices to show that
\be\label{rho-d}
- \frac{\rho'(u)}{\rho(u)} \ll 1 + \log u \qquad (u > 1).
\ee
From \eqref{rho-recur} and \eqref{rho-int},
\be\label{rho-logderiv}
-\frac{\rho'(u)}{\rho(u)} = \frac{\rho(u-1)}{\int_{u-1}^u \rho(v)\, dv}.
\ee
 Let $B_k = \max_{1<v\le k/2} (-\rho'(v)/\rho(v))$.  We have
\[
B_4 = \max_{1<v\le 2} \frac{1/v}{1-\log v} = \frac{1}{2(1-\log 2)} = 1.629\ldots.
\]
If $k\ge 4$ and $k/2 < u \le (k+1)/2$ then the denominator on the right side of \eqref{rho-logderiv} is
at least
\[
\int_{u-1}^{u-1/2} \rho(v)\, dv \ge \rho(u-1) \int_{u-1}^{u-1/2} \er^{-B_k(v-u+1)}\, dv
= \frac{\rho(u-1) (1-\er^{-\frac12 B_k})}{B_k}. 
\]
Using that $\er^{-\frac12 B_k} \le \er^{-\frac12 B_4} < 1/2$,
we infer that
\[
B_{k+1} \le \frac{B_k}{1-\er^{-\frac12 B_k}} \le B_k \(1 + 2 \er^{-\frac12 B_k}\).
\]
The function $x(1+2\er^{-x/2})$ is increasing for $x\ge 0$, hence if $C$ is large and
$B_k \le C\log k$ then \[
B_{k+1} \le (C\log k)(1+2/k^{C/2}) \le C\log(k+1).
\]
Therefore, $B_k \ll \log k$ and \eqref{rho-d} follows.
\end{proof}

Somewhat stronger local bounds on $\rho(u)$, also proved by elementary
methods, can be found in section 2 of \cite{hildebrand}.

\section*{Acknowledgments} 
The author thanks Sean Eberhard and Ben Green for helpful comments on
an early draft, and thanks Dimitris Koukoulopoulos for showing him the lower
bound argument in Theorem \ref{nu-asym}.
  The author also thanks the  anonymous referee for carefully reading the paper
  and making many helpful suggestions.

\bibliographystyle{amsplain}

\begin{dajauthors}
\begin{authorinfo}[kbf]
  Kevin Ford\\
  Department of Mathematics\\
  University of Illinois at Urbana-Champaign\\
  1409 West Green Street\\
  Urbana, IL 61801, USA\\
  ford\imageat{}math\imagedot{}uiuc\imagedot{}edu\\
  \url{https://faculty.math.illinois.edu/~ford/}
\end{authorinfo}
\end{dajauthors}

\end{document}